\newtheorem{theorem}{Theorem}[section]
\newtheorem{corollary}[theorem]{Corollary}
\newtheorem{lemma}[theorem]{Lemma}
\newtheorem{proposition}[theorem]{Proposition}
\newtheorem{problem}[theorem]{Problem}
\theoremstyle{definition}
\newtheorem{definition}[theorem]{Definition}
\newtheorem{example}[theorem]{Example}
\newtheorem{remark}[theorem]{Remark}
\newcommand{\ds}{\displaystyle}
\newcommand{\ep}{\varepsilon}
\newcommand{\fnii}[2]{\ensuremath{{#1}^{-1}[{#2}]}}
\newcommand{\ind}[1]{\ensuremath{\mbox{\boldmath{$1$}}_{#1}}}
\newcommand{\lint}[4]{\ensuremath{\int_{#1}^{#2}{#3}\:\mathrm{d}{#4}}}
\newcommand{\map}[3]{\ensuremath{{#1}:{#2}\to{#3}}}
\newcommand{\md}{\ensuremath{\mathrm{d}}}
\newcommand{\me}{\ensuremath{\mathrm{e}}}
\newcommand{\n}[1]{\ensuremath{\left\|{#1}\right\|}}
\newcommand{\N}{\mathbb{N}}
\newcommand{\ndot}{\ensuremath{\left\|\cdot\right\|}}
\newcommand{\pn}[2]{\ensuremath{\left\|{#1}\right\|_{#2}}}
\newcommand{\pndot}[1]{\ensuremath{\left\|\cdot\right\|_{#1}}}
\newcommand{\Q}{\mathbb{Q}}
\newcommand{\R}{\mathbb{R}}
\newcommand{\restrict}[1]{\ensuremath{\mathord{\upharpoonright}_{#1}}}
\newcommand{\set}[2]{\ensuremath{\left\{{#1}\;:\;\,{#2}\right\}}}
\newcommand{\ts}{\textstyle}
\newcommand{\Z}{\mathbb{Z}}
\DeclareMathOperator{\Lip}{Lip}
\DeclareMathOperator{\sgn}{sgn}
\DeclareMathOperator{\supp}{supp}
\numberwithin{equation}{section}
\title{Convergence Analysis of the Geometric Thin-Film Equation}
\author[L. \'O N\'araigh]{Lennon \'O N\'araigh}
\address[L. \'O N\'araigh]{School of Mathematics and Statistics, University College Dublin, Belfield, Dublin 4, Ireland}
\email{onaraigh@maths.ucd.ie}
\author[K. E. Pang]{Khang Ee Pang}
\address[K. E. Pang]{School of Mathematics and Statistics, University College Dublin, Belfield, Dublin 4, Ireland}
\email{khang-ee.pang@ucdconnect.ie}
\author[R. J. Smith]{Richard J. Smith}
\address[R. J. Smith]{School of Mathematics and Statistics, University College Dublin, Belfield, Dublin 4, Ireland}
\email{richard.smith@maths.ucd.ie}
\date{\today}
\begin{document}

\begin{abstract}
The Geometric Thin-Film equation is a mathematical model of droplet spreading in the long-wave limit, which includes a regularization of the contact-line singularity. We show that the weak formulation of the problem, given initial Radon data, admits solutions that are globally defined for all time and are expressible as push-forwards of Borel measurable functions whose behaviour is governed by a set of ordinary differential equations (ODEs). The existence is first demonstrated in the special case of a finite weighted sum of delta functions whose centres evolve over time -- these are known as `particle solutions'. In the general case, we construct a convergent sequence of particle solutions whose limit yields a solution of the above form. Moreover, we demonstrate that all weak solutions constructed in this way are $1/2$-H\"older continuous in time and are uniquely determined by the initial conditions.
\end{abstract}

\keywords{Droplet spreading, thin-film equation, Geometric Thin-Film Equation, particle solutions, peakon push-forward solutions}
\subjclass{Primary 76A20, 35D30; Secondary 46N20}
\maketitle

\section{Introduction}

When a fluid rests on a substrate, the free-surface height is a key variable which characterizes the fluid mechanics.  In the longwave limit, where the horizontal length scale of the fluid is much larger than the vertical length scale, the Navier-Stokes equations reduce to a fourth-order nonlinear pa\-ra\-bo\-lic equation for the free-surface height $h(x,t)$, this is the thin-film equation \cite{oron1997} (in this work, we have only one spatial variable $x$).   In the simplest setting where the thin film is deposited on a flat non-inclined surface without external forces and only driven by surface tension, the evolution of  $h(x,t)$ satisfies the thin-film equation
\begin{equation}
\partial_t h = -\partial_x(h^3\partial_{xxx}h),
\label{eq:thif1}
\end{equation}
on the domain $\Omega:=\R\times\R^+$ subject to the initial condition $h(x,0)=h^0(x)$.

The thin-film equation can be used to describe not only extended films deposited on substrates, but also, droplets provided the contact angle is small.  In particular, Equation~\eqref{eq:thif1} has admits a piecewise parabolic equi\-li\-brium solution $h(x)=\max(0,A-Bx^2)$, where $A$ and $B$ are positive constants, corresponding to a stationary droplet which extends from $x=-\sqrt{A/B}$ to $x=\sqrt{A/B}$.  In this context, $x=\pm \sqrt{A/B}$ are triple points, which are points where the fluid, the substrate, and the gas in the surrounding atmosphere meet (such points are also referred to as \textit{contact lines}, which makes geometric sense in the context of a three-dimensional droplet).

The thin-film equation as stated in Equation~\eqref{eq:thif1} fails to properly capture the passage to equilibrium, in particular, the motion of the contact line~\cite{huh1971}. The moving contact line violates the no-slip assumption of viscous fluid flow and manifests as a stress singularity at the contact line. A typical occurrence of this scenario is the droplet spreading. To resolve the paradox, the Equation~\eqref{eq:thif1} needs to be modified in such a way as to resolve the singularity at the contact line while keeping the macro scale dynamics of the thin-film equation. This step is often referred to as regularization. Multiple regularization methods are proposed in the literature~\cites{huh1971, hocking1981, sibley2015, deGennes1985, eggers2004, seppecher1996, ding2007, gtfe2022}.  While these approaches differ, they all rely on the parametrizing of physics at small scales (typically, on the nanometre scale), and give the same qualitative results on the droplet scale (typically, on scales from microns to millimetres).

The present work is concerned with the Geometric Thin-Film Equation, a novel regularization of Equation~\eqref{eq:thif1} introduced previously by the present authors in Reference~\cite{gtfe2022}.  This regularization involves a smoothened free-surface height  $\bar{h}(x,t)$, which is connected to the basic free-surface height $h(x,t)$ via convolution:
\begin{subequations}
\begin{equation}
\bar{h}=K*h.
\label{eqn_convolution}
\end{equation}
By requiring that the dynamics of $h(x,t)$ reduce the surface energy 
\begin{equation}
E=\tfrac{1}{2}\lint{-\infty}{\infty}{\partial_x h \partial_x \bar{h}}{x}
\end{equation}
over time, it has been shown that $h(x,t)$ must satisfy the following equation:
\begin{equation}
    \partial_t h + \partial_x[m(h,\bar{h})\partial_{xxx}\bar{h}] = 0.
    \label{eq:hdef}
\end{equation}
Furthermore, in Reference~\cite{gtfe2022} the authors justify the choice of $K$ as the Green's function for the bi-Helmholtz problem $(1-\alpha^2\partial_{xx})^2K(x)=\delta(x)$, hence $K$ is given by:
\begin{equation}
    \label{eq:kernel}
    K(x)=\frac{1}{4\alpha^2}(\alpha+|x|)\me^{-|x|/\alpha}.
\end{equation}%
\label{eq:hdefall}%
\end{subequations}%
Equation~\eqref{eq:hdefall} is solved on the domain $\Omega$, subject to initial condition $h(x,0)=h^0(x)\geq0$. Also, $m(h,\bar{h})=h\bar{h}^2$ is the mobility, and $\alpha$ is a small positive lengthscale (the smoothing lengthscale).

In \cite{gtfe2022}, the present authors have explored Equation~\eqref{eq:hdefall}  numerically.  There, it was demonstrated that the regularized model reproduces the known sprea\-ding behaviour of droplets with small equilibrium contact angles, specifically, the Cox-Voinov law and the Tanner's Law of spreading.  As such, the aim of the present work is to further characterize weak solutions of Equation~\eqref{eq:hdefall}, this time using analytical techniques.

The Geometric Thin-Film Equation can be viewed as a special case of a mechanical model for energy-dissipation on general configuration spaces—the derivation of the general model involves methods from Geometric Mechanics such as Lie Derivatives and Momentum Maps~\cite{holm2008geometric} -- hence the name Geo\-me\-tric Thin-Film Equation. The main advantage of this new method so far has been the non-stiff nature of the differential equations in the model, which leads to robust numerical simulation results. A second advantage is that the Geometric Thin-Film equation admits so-called particle solutions. These give rise to an efficient and accurate numerical method (the particle method) for solving the model equations. Moreover, they can be used to establish weak solutions of \eqref{eq:hdefall} in the general case.

\subsection*{Characteristics and Particle solutions}

Equation~\eqref{eq:hdefall} can be further written as
\begin{subequations}
\begin{equation}
    \partial_t h + \partial_x(vh) = 0.
    \label{eq:hdef_v}
\end{equation}
where
\begin{equation}
v(h(x,t))=\bar{h}^2\partial_{xxx}\bar{h}.
\label{eq:vdef}%
\end{equation}%
\label{eq:hdef_v_all}%
\end{subequations}%
Here, $v$ has the interpretation of a transport velocity, and  Equation~\eqref{eq:hdef_v} is a scalar density equation.  The dependence of $v$ on $x$ and $t$ is implicit through $h(x,t)$; this is made clear in Equation~\eqref{eq:vdef}. In this way, the mass-conservation property of Geometric Thin-Film Equation is also made clear:
\[
\frac{\mathrm{d}}{\mathrm{d} t}\int_{-\infty}^\infty h(x,t)\mathrm{d} x=0,
\]
subject to appropriate boundary conditions on $h(x,t)$ as $|x|\rightarrow \infty$.  

Based on this treatment, we can furthermore identify the characteristic equation:
\begin{subequations}
\begin{equation}
\frac{\mathrm{d} x(t)}{\mathrm{d} t}=v(h(x(t),t)),\qquad t>0
\end{equation}
subject to the initial condition
\begin{equation}
x(t=0)=x^0.
\end{equation}%
\label{eq:chardef}%
\end{subequations}%
In Sections~\ref{sec:coll}--\ref{sect_uniqueness}, we prove that the continuous family of ordinary differential equations~\eqref{eq:chardef} (paramet\-rized by the continuous parameter $x^0$) possesses unique global-in-time solutions and hence, there exists a flow:
\begin{eqnarray}
c:\R \times \R^+&\rightarrow & \mathbb{R},\nonumber \\
              (x^0,t)&\mapsto& c(x^0,t),
\end{eqnarray}
such that
\begin{eqnarray*}
    c_t(x^0,t)&=&v(h(c(x^0,t)),t),\qquad t>0\\
    c(x^0,0)&=&x^0.
\end{eqnarray*}
The global-in-time result is established by proving that characteristic curves do not cross, specifically, if $x^0$ and $y^0$ are two initial conditions such that $x^0<y^0$, then
\begin{equation}
c(x^0,t)<c(y^0,t).
\label{eq:cxy0}
\end{equation}
By this \textit{no-crossing theorem}, any point $x$ at any time $t>0$ can be traced back unambiguously to a single starting-point $x^0$ and hence, the flow $c(x^0,t)$ is invertible, with inverse $\eta$, such that $x^0=\eta(x,t)$.  As such, the solution $h(\cdot,t)$ of Equation~\eqref{eq:hdef_v_all} can be identified as a push-forward of measures, with:
\begin{equation}
h(x,t)=h^0(\eta(x,t))\frac{\partial\eta}{\partial x}.
\label{eq:pfm}
\end{equation}
In our treatment, in full generality $h(\cdot,t)$ and $h^0$ take the form of positive Radon measures in $\mathcal{M}(\R)$ (that are not necessarily absolutely continuous with respect to Lebesgue measure). 
We make these ideas more precise in the main part of the paper.

The existence of solutions to the characteristic equation~\eqref{eq:chardef} can be established by constructing approximate `particle' solutions and taking a limit. One of the key properties of Equation~\eqref{eq:hdefall} is that it admits weak solutions expressible as finite weighted sums of Dirac delta distributions/measures:
\begin{equation}
    h(x,t) = \sum_{i=1}^N w_i \delta(x-x_i(t)) = \sum_{i=1}^N w_i \delta_{x_i(t)}.
    \label{eq:hNdef}
\end{equation}
Each weight $w_i$ and position $x_i(t)$ is identified with a pseudo-particle -- hence, solutions of this form are referred to as particle solutions.
%
Substituting Equation~\eqref{eq:hNdef} into a weak form of Equation~\eqref{eq:hdefall} (see \eqref{eq:weak} below) yields a system of $N$ first order ODEs given by
\begin{subequations}
\label{eq:ivp}
\begin{equation}
    \label{eq:ivp1}
    \begin{cases}
        \dot{x}_i(t) \!\!\!\!&= v_i(x_1(t),\cdots,x_N(t)),\\
        x_i(0) \!\!\!\!&= x_i^0,
    \end{cases} 
    \qquad i = 1,\dots,N.
\end{equation}
where
\begin{equation}
\label{eq:ivp2}
v_i(x_1(t),\cdots,x_N(t))=\bar{h}(x_i(t),t)^2\partial_{xxx}\bar{h}(x_i(t),t), 
\end{equation}
\end{subequations}
and  where we have used a slight abuse of notation, with
\begin{align}
    \bar{h}(x_i(t),t) &= \sum_{j=1}^N w_j K(x_i(t)-x_j(t)), \\
    \partial_{xxx}\bar{h}(x_i(t),t) &= \sum_{\substack{j=1\\j\neq i}}^N w_j K'''(x_i(t)-x_j(t)). \label{eqn_3rd_deriv}
\end{align}
We note that $K$ is only twice differentiable in the classical sense and that $K'''$ is not defined at the origin.  However, one of the key results in the main part of the paper is that a solution that avoids this discontinuous point at $t=0$ stays away from this discontinuous point for all later times. That we can disregard the fact that $K'''(0)$ is undefined and remove the index $j=i$ from the right hand side of \eqref{eqn_3rd_deriv} is justified rigorously in Theorem \ref{theorem_ODE}.

Once the existence of particle solutions has been established, the idea is to form a sequence of particle solutions (parametrized by the discrete parameters $x_1^0,\cdots,x_N^0$) that converge in a suitable sense to the flow $c(x^0,t)$ (parametrized by the continuous parameter $x^0$).  This then enables a construction of a weak solution of Equation~\eqref{eq:hdefall} by the push-forward measure~\eqref{eq:pfm}.  Furthermore, we will demonstrate that the solution is unique, with respect to the class of possible solutions expressible as such push-forward measures. {In addition, using some of the theory developed in \cite{onaraighpangsmith2022}, we determine the regularity of the smoothened free-surface height $\bar{h}$.

\subsection*{Literature Review}

The general approach we take to demonstrate the existence of solutions via push-forward measures has a long-established history. It can be traced back to \cites{braunkepp1977, dobrushin1979}, wherein the existence and uniqueness of solutions of Vlasov equations are demonstrated by means of $w^*$-convergent sequences of measures, distances of Kantorovich-Rubinstein-Wasserstein type (in this case the bounded Lipschitz distance) and the notion of couplings of pairs of measures from optimal transport theory; see also e.g.~\cite{spohn1991}*{Chapter 5} and \cite{golse:16}*{Chapter 1}. Many refinements of these techniques and applications to models can be found in the literature, e.g.~ \cites{ccr2011,cchs2019,cdft2007,carrillo2019,carrillotoscani2004,carrillotoscani2007,litoscani2004,toscani2000}.

The particle solutions which we use here as an intermediate step in our existence proof are a key feature of other continuum models. In the Camassa--Holm (CH) equation (a model for waves in shallow water), the particles are referred to as `peakons' and are analogous to solitons found in other water-wave models.  Peakons are weak solutions of the CH equation; the peakon positions and momenta satisfy a set of ODEs with a Hamiltonian structure.  As such, the authors of Reference~\cite{chertock2012a} have shown the convergence of particle solutions to a weak solution of the CH equation, the main theoretical tool used here is Helly's selection theorem.  The present authors have generalized this approach using a metric Arzel\`a-Ascoli compactness theorem~\cite{onaraighpangsmith2022}. 
Other continuum models also admit particle solutions.  In \cite{laurent2007}, the author has analysed the interval of existence of an aggregation equation and provides an ``acceptability" condition for the kernel function for which the solution exhibits finite time blow-up behaviour.  Also, in \cite{carrillo2019}, the authors have introduced  smoothened diffusion- and porous-medium equations, which admit particle solutions.  The particle solutions then give rise to a novel method for performing numerical simulations of such continuum models. Our approach to the present problem is similar to these existing works.

The particle solutions of the Geometric Thin-Film equation~\eqref{eq:hdefall} have already been studied numerically by the present authors. In particular, in \cite{gtfe2022}, we introduced a fast summation algorithm to reduce the numerical cost of evaluating the system of ODEs in Equation~\eqref{eq:ivp} from $O(N^2)$ to $O(N)$. The algorithm relies on an assumption that the relative ordering of the particles is preserved throughout the interval of the existence of the solution, in other words, crossings (or collisions) are not permitted between the particles. One of the aims of the present work is to prove that this assumption is valid: the particles do not cross in finite time.

We emphasize that other models which admit particle solutions do possess such a `no-crossing theorem' (e.g. the CH equation~\cites{camassa2006, chertock2012b}).  In the case of the CH equation, the no-crossing theorem is proved using arguments about the conservation of total momentum, these arguments rely on the underlying Hamiltonian structure of the CH equation.  However, in the present case, there is no such Hamiltonian structure; instead, the no-crossing is shown to be a consequence of a quantitative estimate established by analysing $K$ and $K'''$, and applying a form of Gr\"onwall's inequality.

\subsection*{Plan of the paper}

The purpose of Section \ref{sec:weak} is to state a weak form of \eqref{eq:hdefall}, identify a class of potential weak solutions expressible as push-forwards of the initial condition, and finally specify carefully the corresponding ODE which yields a weak solution of \eqref{eq:hdefall}.  This is Equation~\eqref{eq:chardef} again, albeit now stated more precisely.   In Sections \ref{sec:coll} and \ref{sect_existence}, we show that there exist solutions of the ODE which are globally-defined in time.  Moreover, we show that the smoothened free-surface height $\bar{h}$ has the desired regularity properties. In Section \ref{sect_uniqueness}, we show that there is only one such solution with the given initial data, with respect to this class. Finally, in Section \ref{sect_final_remarks}, we remark on the continuity properties of the solution and introduce a wider class of weak solutions with respect to which uniqueness fails. In so doing, we provide some justification for considering the class of push-forward solutions.

\section{Weak formulation of the Geometric Thin-Film Equation}\label{sec:weak}

In this section we introduce the weak formulation of the Geometric Thin-Film Equation, a class of potential weak solutions expressible as push-forwards of the initial condition, and finally a sufficient condition for an element of this class to be a weak solution. 

Let $\mathcal{M}(\R)$ denote the space of Radon measures (equivalently finite Borel measures) on $\R$, equipped with the total variation norm $\pndot{1}$, and let $\mathcal{M}^+(\R)$ denote the subset of positive measures. As stated above, in full generality we will treat the function $h$ as positive measure-valued, i.e. we can regard $h$ as a map from $\R^+$ to $\mathcal{M}^+(\R)$ with $h(0)=h^0$. It will help to make the identification $\bar{h}(t) = \bar{h}(\cdot,t) = K * h(t)$. We can write $h=(1-\alpha^2\partial_{xx})^2\bar{h}$, where the derivative on the right hand side is taken in a distributional sense, and is representable as a measure. With these relationships in mind, we make the following definition.

\begin{definition}\label{defn_weak_solution}
We say that the pair $(h,\bar{h})$ is a weak solution of Equation~\eqref{eq:hdefall} if it satisfies
\begin{multline}
\lint{-\infty}{\infty}{\phi(x,0)}{h^0(x)} + \lint{0}{\infty}{ \lint{-\infty}{\infty}{\bar{h} (1-\alpha^2\partial_{xx})^2\phi_t}{x}}{t} + \lint{0}{\infty}{\lint{-\infty}{\infty}{\bar{h}^3 \partial_{xxx}\bar{h} \phi_x}{x}}{t} \\
-2\alpha^2 \lint{0}{\infty}{\lint{-\infty}{\infty}{\bar{h}^2 \partial_{xx}\bar{h} \partial_{xxx}\bar{h} \phi_x}{x}}{t} - \alpha^4 \lint{0}{\infty}{\lint{-\infty}{\infty}{\bar{h} \partial_{x}\bar{h} (\partial_{xxx}\bar{h})^2 \phi_x}{x}}{t} \\
-\frac{1}{2}\alpha^4 \lint{0}{\infty}{\lint{-\infty}{\infty}{(\bar{h} \partial_{xxx}\bar{h})^2 \phi_{xx}}{x}}{t} = 0,
\label{eq:weak}
\end{multline}
for all indefinitely differentiable and compactly-supported test functions $\phi\in C_c^\infty (\Omega)$.
\end{definition}

Next, we introduce our class of potential solutions of \eqref{eq:weak}. Let $\mu \in \mathcal{M}^+(\R)\setminus\{0\}$ satisfy $\pn{\mu}{1} \leq 1$. To make the notation a little easier, we will replace our initial condition $h^0$ by $\mu$. Let $M \subseteq \R$ be a set that is finite or equal to $\R$, and having the property that $\supp \mu \subseteq M$, where $\supp \mu$ denotes the support of $\mu$.

We will build a system of curves and corresponding push-forward measures. Consider the vector space $X_M$ of locally bounded functions $\map{c}{M \times \R^+}{\R}$, such that $c(\cdot,t)$ is Borel measurable for all $t \in \R^+$ and $c(x,\cdot)$ is $C^1$ for all $x \in M$. We define $h(t)$, $t \in \R^+$, as the push-forward measure $h(t) = c(\cdot,t)_*\mu$. The initial condition $h(0)=\mu$ is established by requiring that $c(x,0)=x$, $x \in M$. Following \eqref{eq:cxy0}, in addition we require the following `no-crossing' condition:
\begin{equation}\label{eqn_no-crossing}
 c(x,t) < c(y,t) \text{ whenever }x,y \in M,\, x < y,
\end{equation}
for all $t \in \R^+$. This is equivalent to saying that each function $c(\cdot,t)$ is strictly increasing on $M$ (this condition ensures that $c(\cdot,t)$ has at most countably many discontinuities, so certainly it is Borel measurable). For this reason we introduce the set
\[
I_M = \set{c \in X_M}{c \text{ satisfies \eqref{eqn_no-crossing} for all $t \in \R^+$}}.
\]

The function $h$ is supposed to provide a solution of \eqref{eq:weak}. In order to do this, we identify in \eqref{eqn_ODE_system} below a corresponding system of ODEs for the curves $c(x,\cdot)$ which, when satisfied, induces a solution $h$ of \eqref{eq:weak}. This is our precise verison of \eqref{eq:chardef}. Before the theorem is presented, a little groundwork is needed. First, we make a remark about $\bar{h}(t) = \bar{h}(\cdot,t)$. As shown in \eqref{eqn_convolution}, this function is defined by convolving $K$ with $h(t)$, with the latter now dependent on $c(\cdot,t)$. In particular, we observe the identity
\begin{align}
\bar{h}(c(x,t),t) \;=& \lint{-\infty}{\infty}{K(c(x,t)-z)}{h(t)(z)} \nonumber\\
=& \lint{-\infty}{\infty}{K(c(x,t)-c(z,t))}{\mu(z)}. \label{eqn_H}
\end{align}
To make this dependence on $c$ explicit, we will write $H(c,x,t) = \bar{h}(c(x,t),t)$.

Second, we present the following simple lemma.

\begin{lemma}\label{lem_K'''}
Given $x \neq 0$ we have
\begin{equation}\label{eqn_K'''}
K(x)K'''(x) - 2\alpha^2 K''(x)K'''(x) + \ts \frac{1}{2}\alpha^4((K''')^2)'(x) = 0.
\end{equation}
\end{lemma}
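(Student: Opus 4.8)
The plan is to verify the identity \eqref{eqn_K'''} by direct computation, exploiting the fact that $K$ solves the bi-Helmholtz ODE away from the origin. Recall from \eqref{eq:kernel} that $K(x) = \frac{1}{4\alpha^2}(\alpha+|x|)\me^{-|x|/\alpha}$, and that $K$ satisfies $(1-\alpha^2\partial_{xx})^2 K = 0$ on $\R\setminus\{0\}$. First I would restrict attention to $x > 0$; the case $x < 0$ follows because $K$ is even, hence $K''$ is even and $K'''$ is odd, so each of the three terms in \eqref{eqn_K'''} has a definite parity that makes the identity for $x<0$ equivalent to the one for $x>0$. (One should check the signs match up: $KK'''$ is odd, $K''K'''$ is odd, and $((K''')^2)' = 2K'''K''''$ is odd, so indeed the whole left-hand side is an odd function and vanishing on $(0,\infty)$ gives vanishing on $(-\infty,0)$.)

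For $x>0$, the second-order ODE $K - 2\alpha^2 K'' + \alpha^4 K'''' = 0$ holds identically. The key step is to differentiate this relation and combine it cleverly. Multiplying the ODE through by $K'''$ gives
\[
K K''' - 2\alpha^2 K'' K''' + \alpha^4 K'''' K''' = 0.
\]
Since $K''''K''' = \tfrac12 ((K''')^2)'$, this is exactly \eqref{eqn_K'''}. So the lemma is essentially an immediate consequence of the defining ODE for $K$, once one observes that the awkward-looking last term is just $\tfrac12\alpha^4$ times a total derivative that pairs naturally with the $K''''$ term.

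The only genuine obstacle is making sure the bi-Helmholtz ODE really does hold in the \emph{classical} pointwise sense on $\R\setminus\{0\}$ (not merely distributionally on all of $\R$), so that the pointwise multiplication by $K'''$ is legitimate. This is straightforward: on $(0,\infty)$ we have $K(x) = \frac{1}{4\alpha^2}(\alpha+x)\me^{-x/\alpha}$, which is smooth there, and a short computation of $K''(x)$ and $K''''(x)$ confirms $K - 2\alpha^2 K'' + \alpha^4 K'''' \equiv 0$ on $(0,\infty)$. (Concretely, writing $K = \frac{1}{4\alpha}\me^{-x/\alpha} + \frac{x}{4\alpha^2}\me^{-x/\alpha}$, each summand is of the form $p(x)\me^{-x/\alpha}$ with $p$ a polynomial of degree $\le 1$, and such functions are annihilated by $(1-\alpha^2\partial_{xx})^2 = (1-\alpha\partial_x)^2(1+\alpha\partial_x)^2$ since $(1+\alpha\partial_x)^2$ kills $\me^{-x/\alpha}$ and $x\me^{-x/\alpha}$.) Thus no regularity issue arises as long as $x\neq 0$, which is exactly the hypothesis, and the proof consists of recording this ODE, multiplying by $K'''$, recognizing the total-derivative term, and invoking parity for $x<0$.
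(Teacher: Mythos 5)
Your proof is correct, but it takes a genuinely different and more conceptual route than the paper's. The paper simply computes $K''(x)$, $K'''(x)$ and $\bigl((K''')^2\bigr)'(x)$ explicitly from the formula \eqref{eq:kernel} for $x\neq 0$ and then verifies the identity by substitution. You instead recognize the left-hand side of \eqref{eqn_K'''} as exactly $K'''\cdot\bigl(K-2\alpha^2K''+\alpha^4 K''''\bigr)$, using $K''''K'''=\tfrac12\bigl((K''')^2\bigr)'$, and then invoke the fact that $K$ solves the homogeneous bi-Helmholtz equation classically on $\R\setminus\{0\}$ --- which you justify cleanly via the factorization $(1-\alpha^2\partial_{xx})^2=(1-\alpha\partial_x)^2(1+\alpha\partial_x)^2$ and the observation that $(1+\alpha\partial_x)^2$ annihilates $p(x)\me^{-x/\alpha}$ for $\deg p\le 1$. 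Your approach explains \emph{why} the identity holds (it is a first integral of the defining ODE), makes it manifest how the lemma will be used in Theorem~\ref{theorem_ODE} (where precisely the combination $K-2\alpha^2K''+\alpha^4K''''=\delta$ appears), and avoids the somewhat opaque derivative bookkeeping. The parity argument you append to handle $x<0$ is sound but not strictly needed, since the ODE $K-2\alpha^2K''+\alpha^4K''''=0$ holds on all of $\R\setminus\{0\}$ and one can multiply by $K'''$ on both half-lines directly. The paper's brute-force computation buys nothing except that the explicit formulae for $K''$ and $K'''$ are reused later (e.g.\ in Lemma~\ref{lem_Phi_est_2}); as a standalone proof of this lemma, yours is preferable.
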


\begin{proof}
 It is straightforward to verify that for $x \neq 0$
 \[
  K''(x) = \frac{1}{4\alpha^2}\left( \frac{|x|}{\alpha^2} - \frac{1}{\alpha}\right) \me^{-\frac{|x|}{\alpha}}, \qquad
  K'''(x) = \frac{1}{4\alpha^2}\left( \frac{2\sgn(x)}{\alpha^2} - \frac{x}{\alpha^3}\right) \me^{-\frac{|x|}{\alpha}}
  \]
and
\[
((K''')^2)'(x) = \frac{1}{16\alpha^4}\left( -\frac{12\sgn(x)}{\alpha^5} + \frac{10x}{\alpha^6} - \frac{2x|x|}{\alpha^7}\right) \me^{-\frac{2|x|}{\alpha}}.
\]
From these identities the conclusion readily follows.
\end{proof}
Third, we set down some notation. Let $\Delta=\set{(x,x)}{x \in \R}$ denote the diagonal of $\R^2$ and let $\hat{h}(t)$ denote the product of the measure $h(t)$ with itself, thus, given a bounded Borel measurable function $\map{f}{\R^2}{\R}$ we have
\[
 \lint{\R^2}{}{f(x,y)}{\hat{h}(t)(x,y)} = \lint{-\infty}{\infty}{\lint{-\infty}{\infty}{f(x,y)}{h(t)(x)}}{h(t)(y)}.
\]

Given $T > 0$, consider the vector space $X_{M,T}$ of locally bounded functions $\map{c}{M \times [0,T]}{\R}$, such that $c(\cdot,t)$ is Borel measurable for all $t \in [0,T]$ and $c(x,\cdot)$ is $C^1$ for all $x \in M$. In addition we define the subset
\[
I_{M,T} = \set{c \in X_{M,T}}{c \text{ satisfies \eqref{eqn_no-crossing} for all $t \in [0,T]$}}.
\]
For convenience we will allow $T=\infty$ and write $X_{M,\infty}=X_M$ and $I_{M,\infty} = I_M$.

Finally we will specify our precise version of \eqref{eq:chardef}. Given $c \in I_{M,T}$, consider the (finite or continuous) system of ODEs given by 

\begin{equation}\label{eqn_ODE_system}
 \begin{cases}
  c_t(x,t) \!\!\!\!&= \ds H(c,x,t)^2\lint{z \neq x}{}{K'''(c(x,t)-c(z,t))}{\mu(z)}, \\
  c(x,0) \!\!\!\!&= x,
 \end{cases} \qquad (x,t) \in M \times (0,T).
 \end{equation}

Now we are able to state our sufficient condition.

\begin{theorem}\label{theorem_ODE}
Let $c \in I_M$ satisfy \eqref{eqn_ODE_system} (where $T=\infty$). Then the corresponding map $h$ of push-forward measures is a solution of \eqref{eq:weak}.
\end{theorem}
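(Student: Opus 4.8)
The plan is to verify that the push-forward measure $h(t) = c(\cdot,t)_*\mu$ satisfies the weak formulation \eqref{eq:weak} by substituting the definition of $h$, rewriting all spatial integrals against $h(t)$ as integrals against $\mu$ via the push-forward, and then using the ODE system \eqref{eqn_ODE_system} to collapse the time derivative. First I would observe that for any bounded continuous $g$, $\lint{-\infty}{\infty}{g(x)}{h(t)(x)} = \lint{-\infty}{\infty}{g(c(x,t))}{\mu(x)}$, and similarly for products via $\hat h(t)$. The term $\lint{-\infty}{\infty}{\bar h\,(1-\alpha^2\partial_{xx})^2\phi_t}{x}$ should be handled by recognizing $(1-\alpha^2\partial_{xx})^2$ as the formal inverse of convolution with $K$, so that $\lint{-\infty}{\infty}{\bar h\,\psi}{x} = \lint{-\infty}{\infty}{(K*\psi)}{h(t)} = \lint{-\infty}{\infty}{(K*\psi)(c(x,t))}{\mu(x)}$ for $\psi = (1-\alpha^2\partial_{xx})^2\phi_t$; but $K*\psi = \phi_t$, so this term equals $\lint{-\infty}{\infty}{\phi_t(c(x,t),t)}{\mu(x)}$. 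The remaining four spatial integrals all involve $\bar h$ and its derivatives evaluated pointwise, multiplied by $\phi_x$ or $\phi_{xx}$; here I would write $\bar h(y,t) = \lint{-\infty}{\infty}{K(y - c(z,t))}{\mu(z)}$ and its $x$-derivatives as integrals of $K', K'', K'''$, noting that differentiating under the integral is legitimate away from the diagonal and that the diagonal $\{z : c(z,t) = y\}$ has the right measure-theoretic structure (a single atom's worth, removable as in \eqref{eqn_3rd_deriv}).

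The crucial algebraic step is then to combine the four $\phi_x$- and $\phi_{xx}$-terms. After pushing forward, the integrand of the combined spatial part, viewed as a function on $M$ (or on $M^2$, $M^3$ once the nested $\bar h$-integrals are expanded), should be expressible using the combination $K(u)K'''(u) - 2\alpha^2 K''(u)K'''(u) + \tfrac12\alpha^4((K''')^2)'(u)$ appearing in Lemma \ref{lem_K'''}, which vanishes for $u \neq 0$. More precisely, I expect that after integrating by parts in $t$ — converting $\lint{0}{\infty}{\phi_t(c(x,t),t)}{t}$ back to $-\phi(c(x,0),0) - \lint{0}{\infty}{\frac{d}{dt}[\phi(c(x,t),t)]\,\ldots}$, wait, more carefully: $\lint{0}{\infty}{\partial_t[\phi(c(x,t),t)]}{t} = -\phi(x,0)$ since $\phi$ is compactly supported in time, and $\partial_t[\phi(c(x,t),t)] = \phi_t(c(x,t),t) + \phi_x(c(x,t),t)\,c_t(x,t)$. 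So $\lint{-\infty}{\infty}{\lint{0}{\infty}{\phi_t(c(x,t),t)}{t}}{\mu(x)} = -\lint{-\infty}{\infty}{\phi(x,0)}{\mu(x)} - \lint{-\infty}{\infty}{\lint{0}{\infty}{\phi_x(c(x,t),t)\,c_t(x,t)}{t}}{\mu(x)}$. The first piece cancels the initial-data term $\lint{-\infty}{\infty}{\phi(x,0)}{h^0}$ in \eqref{eq:weak}. It remains to show that the $\phi_x\,c_t$ term exactly cancels the four $\phi_x$/$\phi_{xx}$ spatial terms; substituting $c_t(x,t) = H(c,x,t)^2 \lint{z\neq x}{}{K'''(c(x,t)-c(z,t))}{\mu(z)}$ from \eqref{eqn_ODE_system} and comparing with the push-forward expansions of the remaining terms, the matching should reduce, term by term (grouping by which combination of $\mu$-integrals and which of $\phi_x$ versus $\phi_{xx}$ appears), precisely to an application of the identity in Lemma \ref{lem_K'''}, possibly after one further integration by parts to move a derivative between $\phi_x$ and $\phi_{xx}$ and to symmetrize in the dummy variables.

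The main obstacle I anticipate is bookkeeping: correctly expanding $\bar h^3 \partial_{xxx}\bar h$, $\bar h^2 \partial_{xx}\bar h\,\partial_{xxx}\bar h$, $\bar h\,\partial_x\bar h\,(\partial_{xxx}\bar h)^2$ and $(\bar h\,\partial_{xxx}\bar h)^2$ as iterated $\mu$-integrals over up to four dummy variables, keeping track of which factor contributes a $K$, $K'$, $K''$, or $K'''$, and then recognizing the Lemma~\ref{lem_K'''} combination inside. A secondary technical point is justifying the removal of the diagonal in \eqref{eqn_3rd_deriv}/\eqref{eqn_ODE_system} — i.e., that the set where two characteristics coincide contributes nothing — which follows from the no-crossing condition \eqref{eqn_no-crossing} (strict monotonicity of $c(\cdot,t)$ forces $\{z : c(z,t) = c(x,t)\}$ to be at most $\{x\}$, an atom that can be excised without affecting the $K'''$-integral since $K'''$ is odd and any such atom's self-interaction is formally zero). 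One also needs Fubini to interchange the $t$- and $x$-integrations and to differentiate under the integral sign in the spatial variable; these are justified by the local boundedness of $c$, the smoothness and integrability of $K$ and its derivatives away from $0$, the exponential decay in \eqref{eq:kernel}, the finiteness of $\mu$, and the compact support of $\phi$. Once these measure-theoretic points are dispatched, the computation is, I expect, a direct if lengthy verification driven entirely by Lemma~\ref{lem_K'''}.
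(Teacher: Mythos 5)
Your plan coincides with the paper's proof in its overall architecture: rewrite the linear $\phi_t$ term as $\int_0^\infty\!\int\phi_t\,\mathrm{d}h(t)\,\mathrm{d}t$ using the fact that $(1-\alpha^2\partial_{xx})^2$ inverts convolution with $K$; push all integrals forward to integrals against $\mu$; integrate by parts in $t$ via $\psi(x,t)=\phi(c(x,t),t)$ so that the initial-data term cancels and a residual $\phi_x\,c_t$ term appears; integrate by parts in $x$ on the $\phi_{xx}$ term; split the resulting double $h$-integrals into diagonal and off-diagonal parts with respect to $\hat h(t)$; and substitute \eqref{eqn_ODE_system}. All of this is correct and is exactly the route the paper takes.

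There is, however, one missing ingredient without which the plan as stated cannot close. You assert that the matching of the four nonlinear terms with the $\phi_x\,c_t$ term reduces ``precisely to an application of the identity in Lemma \ref{lem_K'''}.'' It does not: Lemma \ref{lem_K'''} only annihilates the \emph{diagonal} contribution (the part of $\hat h(t)$ supported on $\Delta$). The surviving \emph{off-diagonal} contribution still carries a free Lebesgue integration in $x$ --- after the integration by parts it takes the form
$\int_{\R^2\setminus\Delta}\int_{-\infty}^{\infty}\bar h(x)^2\bigl\{K(x-y)-2\alpha^2K''(x-y)+\alpha^4K''''(x-y)\bigr\}K'''(x-z)\phi_x\,\mathrm{d}x\,\mathrm{d}\hat h(y,z)$ --- whereas the ODE right-hand side, pushed forward, is a pure double $\hat h$-integral with the spatial variable evaluated at $y$. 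The only way to collapse the $\mathrm{d}x$ integration is the bi-Helmholtz Green's function identity $K-2\alpha^2K''+\alpha^4K''''=\delta$, which you invoke (implicitly) for the linear $\phi_t$ term but never for the nonlinear ones; without it the two sides cannot be identified. A secondary inaccuracy: the exclusion of $z=x$ in \eqref{eqn_ODE_system} is not justified by saying the atom's self-interaction is ``formally zero because $K'''$ is odd'' --- $K'''$ has a genuine jump at the origin with $K'''(0)_+=-K'''(0)_-=\pn{K'''}{\infty}\neq 0$, so the self-interaction of an atom is not defined, let alone zero. The correct reason the diagonal causes no trouble is structural: its entire contribution to the nonlinear terms vanishes identically by Lemma \ref{lem_K'''}, which is why the ODE can consistently be posed with the diagonal excised.
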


\begin{proof}
Let $\phi \in C^\infty_c(\Omega)$ be a test function. Define
\begin{equation}\label{eqn_T_1_1}
 T_1 = \lint{-\infty}{\infty}{\phi(x,0)}{\mu(x)}
\end{equation}
and
\begin{equation}\label{eqn_T_2_1}
 T_2 = \lint{0}{\infty}{ \lint{-\infty}{\infty}{\bar{h} (1-\alpha^2\partial_{xx})^2\phi_t}{x}}{t} = \lint{0}{\infty}{\lint{-\infty}{\infty}{\phi_t(x,t)}{h(t)(x)}}{t},         
\end{equation}
and let $T_3$ denote the remaining terms in \eqref{eq:weak}. To be a weak solution, we require that $T_1+T_2+T_3=0$. This will be done in two steps. In the first step we find an alternative expression for $T_3$. We write
\[
 T_3 = \lint{0}{\infty}{I_1 + I_2}{t},
\]
where
\[
I_1 \;:=\; \lint{-\infty}{\infty}{\left\{\bar{h}^2(\bar{h}\partial_{xxx}\bar{h} - 2\alpha^2 \partial_{xx}\bar{h}\partial_{xxx}\bar{h}) - \alpha^4\bar{h}\partial_x\bar{h}(\partial_{xxx}\bar{h})^2 \right\}\phi_x}{x}
\]
and
\[
I_2 \;:=\; {\ts -\frac{1}{2}\alpha^4} \lint{-\infty}{\infty}{\bar{h}^2(\partial_{xxx}\bar{h})^2 \phi_{xx}}{x}.
\]
The alternative expression for $T_3$ depends in turn on finding a different way to express the sum $I_1+I_2$. For now, we suppress the dependency of $h$ and $\phi$ on $t$. In the following computations, $\lint{\Delta}{}{\cdot}{\hat{h}(y,y)}$ denotes integration of the product measure along the diagonal $\Delta$. We write
\begin{align}
I_1 \;=& \int_{-\infty}^{\infty} \bar{h}(x)^2 \left\{ \lint{-\infty}{\infty}{K(x-y)}{h(y)}\lint{-\infty}{\infty}{K'''(x-z)}{h(z)} \right. \nonumber\\
& \left. - 2\alpha^2 \lint{-\infty}{\infty}{K''(x-y)}{h(y)}\lint{-\infty}{\infty}{K'''(x-z)}{h(z)} \right\}\phi_x\, \md x \nonumber\\
& -\alpha^4 \lint{-\infty}{\infty}{\bar{h}(x)\partial_x \bar{h}(x)\lint{-\infty}{\infty}{K'''(x-y)}{h(y)}\lint{-\infty}{\infty}{K'''(x-z)}{h(z)}\phi_x}{x} \nonumber\\
=& \lint{\Delta}{}{\lint{-\infty}{\infty}{\bar{h}(x)^2\left\{K(x-y)K'''(x-y) - 2\alpha^2 K''(x-y)K'''(x-y) \right\}\phi_x}{x}}{\hat{h}(y,y)} \nonumber\\
& -\underbrace{\alpha^4 \lint{\Delta}{}{\lint{-\infty}{\infty}{\bar{h}(x)\partial_x \bar{h}(x)K'''(x-y)^2\phi_x}{x}}{\hat{h}(y,y)}}_{=:\,J_1} \nonumber\\
& +\lint{\R^2\setminus\Delta}{}{\lint{-\infty}{\infty}{\bar{h}(x)^2\left\{K(x-y)K'''(x-z) - 2\alpha^2 K''(x-y)K'''(x-z) \right\}\phi_x}{x}}{\hat{h}(y,z)} \nonumber\\
& -\underbrace{\alpha^4 \lint{\R^2\setminus\Delta}{}{\lint{-\infty}{\infty}{\bar{h}(x)\partial_x \bar{h}(x)K'''(x-y)K'''(x-z)\phi_x}{x}}{\hat{h}(y,z)}}_{=:\,J_2}, \label{eqn_new_I_1}
\end{align}
and
\begin{align*}
I_2 \;=& {\ts -\frac{1}{2}\alpha^4}\lint{-\infty}{\infty}{\bar{h}(x)^2 \left\{ \lint{-\infty}{\infty}{K'''(x-y)}{h(y)}\lint{-\infty}{\infty}{K'''(x-z)}{h(z)} \right\} \phi_{xx}}{x}\\
=& {\ts -\frac{1}{2}\alpha^4} \lint{\Delta}{}{\left\{\lint{-\infty}{\infty}{\bar{h}(x)^2K'''(x-y)^2\phi_{xx}}{x} \right\}}{\hat{h}(y,y)}\\
& {\ts -\frac{1}{2}\alpha^4} \lint{\R^2\setminus\Delta}{}{\left\{\lint{-\infty}{\infty}{\bar{h}(x)^2K'''(x-y)K'''(x-z)\phi_{xx}}{x} \right\}}{\hat{h}(y,z)}.
\end{align*}
Applying integration by parts to the inner integrals and swapping indices yields
\begin{align}
I_2 \;=& \underbrace{\alpha^4 \lint{\Delta}{}{\lint{-\infty}{\infty}{\bar{h}(x)\partial_x \bar{h}(x)K'''(x-y)^2\phi_x}{x}}{\hat{h}(y,y)}}_{=\,J_1} \nonumber\\
& +{\ts \frac{1}{2}\alpha^4} \lint{\Delta}{}{\lint{-\infty}{\infty}{\bar{h}(x)^2\partial_x (K'''(x-y)^2)\phi_x}{x}}{\hat{h}(y,y)} \nonumber\\
& +\underbrace{\alpha^4 \lint{\R^2\setminus\Delta}{}{\lint{-\infty}{\infty}{\bar{h}(x)\partial_x \bar{h}(x)K'''(x-y)K'''(x-z)\phi_x}{x}}{\hat{h}(y,z)}}_{=\,J_2} \nonumber\\
& +\alpha^4 \lint{\R^2\setminus\Delta}{}{\lint{-\infty}{\infty}{\bar{h}(x)^2K''''(x-y)K'''(x-z)\phi_x}{x}}{\hat{h}(y,z)}. \label{eqn_new_I_2}
\end{align}
When we sum $I_1$ and $I_2$ the $J_1$ and $J_2$ terms in \eqref{eqn_new_I_1} and \eqref{eqn_new_I_2} cancel, giving
\begin{align}
 I_1+I_2 \;=& \int_{-\infty}^\infty \int_{-\infty}^\infty \bar{h}(x)^2 \left\{ K(x-y)K'''(x-y) - 2\alpha^2 K''(x-y)K'''(x-y) \right. \nonumber\\
 & +\left. {\ts \frac{1}{2}\alpha^4} \partial_x (K'''(x-y)^2) \right\}\phi_x \,\md x\, \md \hat{h}(y,y)\nonumber\\
 & +\int_{\R^2\setminus\Delta} \int_{-\infty}^\infty \bar{h}(x)^2 \left\{K(x-y) - 2\alpha^2K''(x-y)\right.\nonumber\\
 & +\left. \alpha^4 K''''(x-y) \right\} K'''(x-z)\phi_x \,\md x\,\md \hat{h}(y,z). \label{eqn_I_1+I_2}
\end{align}
By Lemma \ref{lem_K'''} the first integral on the right hand side of \eqref{eqn_I_1+I_2} vanishes, and as
\[
 K(x) - 2\alpha^2 K''(x) + \alpha^4 K''''(x) = \delta(x),
\]
the second integral can be simplified, giving
\begin{align*}
I_1 + I_2 \;=& \lint{\R^2\setminus\Delta}{}{\lint{-\infty}{\infty}{\bar{h}(x)^2\delta(x-y)K'''(x-z)\phi_x(x)}{x}}{\hat{h}(y,z)}\\
 =& \lint{\R^2\setminus\Delta}{}{\bar{h}(y)^2 K'''(y-z)\phi_x(y)}{\hat{h}(y,z)}.
\end{align*}
We now reinstate the dependence of $h$ and $\phi$ on $t$ and write
\begin{equation}\label{eqn_T_3_1}
 T_3 = \lint{0}{\infty}{I_1+I_2}{t} = \lint{0}{\infty}{\lint{\R^2\setminus\Delta}{}{\bar{h}(x,t)^2K'''(x-z)\phi_x(x,t)}{\hat{h}(t)(x,z)}}{t}.
\end{equation}
This completes the first step of the proof.

In the second step, we show that $T_1+T_2+T_3=0$. Using the push-forward definition of $h(t)$, we have
\begin{align*}
 & \lint{\R^2\setminus\Delta}{}{\bar{h}(x,t)^2K'''(x-z)\phi_x(x,t)}{\hat{h}(t)(x,z)} \\
 =& \lint{-\infty}{\infty}{\bar{h}(x,t)^2\phi_x(x,t)\lint{z \neq x}{}{K'''(x-z)}{h(t)(z)}}{h(t)(x)} \\
  =& \lint{-\infty}{\infty}{\bar{h}(x,t)^2\phi_x(x,t)\lint{c(z,t) \neq x}{}{K'''(x-c(z,t))}{\mu(z)}}{h(t)(x)} \\
  =& \lint{-\infty}{\infty}{H(c,x,t)^2\phi_x(c(x,t),t)\lint{c(z,t) \neq c(x,t)}{}{K'''(c(x,t)-c(z,t))}{\mu(z)}}{\mu(x)}\\
  =& \lint{-\infty}{\infty}{H(c,x,t)^2\phi_x(c(x,t),t)\lint{z \neq x}{}{K'''(c(x,t)-c(z,t))}{\mu(z)}}{\mu(x)},
\end{align*}
noting that, by \eqref{eqn_no-crossing}, $z \neq x$ if and only if $c(z,t) \neq c(x,t)$. Thus by this and \eqref{eqn_T_3_1} we get
\begin{equation}\label{eqn_T_3_2}
 T_3 = \lint{0}{\infty}{\lint{-\infty}{\infty}{H(c,x,t)^2\phi_x(c(x,t),t)\lint{z \neq x}{}{K'''(c(x,t)-c(z,t))}{\mu(z)}}{\mu(x)}}{t}.
\end{equation}
Now recall the definitions of $T_1$ and $T_2$ in \eqref{eqn_T_1_1} and \eqref{eqn_T_2_1}. Define $\psi(x,t) = \phi(c(x,t),t)$. Then 
\[
 \psi_t(x,t) = \phi_t(c(x,t),t) + \phi_x(c(x,t),t)c_t(x,t),
\]
hence we can write
\begin{align*}
 \lint{-\infty}{\infty}{\phi_t(x,t)}{h(t)(x)} \;=& \lint{-\infty}{\infty}{\phi_t(c(x,t),t)}{\mu(x)} \\
 =& \lint{-\infty}{\infty}{\psi_t(x,t)}{\mu(x)} - \lint{-\infty}{\infty}{\phi_x(c(x,t),t)c_t(x,t)}{\mu(x)}.
\end{align*}
Substituting this into \eqref{eqn_T_2_1} and recalling \eqref{eqn_T_1_1} gives
\begin{align}
 T_2 \;=& \lint{-\infty}{\infty}{\lint{0}{\infty}{\psi_t(x,t)}{t}}{\mu(x)} - \lint{0}{\infty}{\lint{-\infty}{\infty}{\phi_x(c(x,t),t)c_t(x,t)}{\mu(x)}}{t} \nonumber\\
 =& -\lint{-\infty}{\infty}{\psi(x,0)}{\mu(x)} - \lint{0}{\infty}{\lint{-\infty}{\infty}{\phi_x(c(x,t),t)c_t(x,t)}{\mu(x)}}{t} \nonumber\\
 =& {-T_1} - \lint{0}{\infty}{\lint{-\infty}{\infty}{\phi_x(c(x,t),t)c_t(x,t)}{\mu(x)}}{t}. \label{eqn_T_2_2}
\end{align}
Finally, recalling \eqref{eqn_T_3_2}, substituting \eqref{eqn_ODE_system} into \eqref{eqn_T_2_2} and rearranging gives
\begin{align*}
 & T_1+T_2\\
 =& {-\lint{0}{\infty}{\lint{-\infty}{\infty}{H(c,x,t)^2\phi_x(c(x,t),t)\lint{z \neq x}{}{K'''(c(x,t)-c(z,t))}{\mu(z)}}{\mu(x)}}{t}}\\
 =& {-T_3},
\end{align*}
as required.
\end{proof}

\begin{remark}\label{rem_theorem_ODE_converse}
Theorem \ref{theorem_ODE} has a partial converse. If $c \in I_M$ is a solution of \eqref{eq:weak} then it can be shown that \eqref{eqn_ODE_system} is satisfied whenever $x \in \supp\mu$. The curves $c(x,\cdot)$, $x \in \supp \mu$, are the only ones that matter in the definition of $h$, but it will be technically convenient for us to assume that \eqref{eqn_ODE_system} is satisfied for all $x \in M\setminus \supp \mu$ as well. 
\end{remark}

\section{Global existence of particle solutions}\label{sec:coll}

We demonstrate existence of solutions $c \in I_M$ of \eqref{eqn_ODE_system} (and thus solutions of \eqref{eq:weak}) in two steps. The first step involves demonstrating the global existence of `particle solutions'. Before presenting this first step in full detail, we state and prove a key lemma that provides some important estimates of the evolution of a solution $c \in I_M$ of \eqref{eqn_ODE_system}. Its first application will be to show that our particle solutions exist for all $t \in \R^+$.

Lemma \ref{lem_Phi_est_2} uses the following terminology. Noting that $K'''$ is an odd function that is Lipschitz on $(0,\infty)$ and $(-\infty,0)$, let $\Lip(K''')$ denote the common Lipschitz constant of $K'''\restrict{(0,\infty)}$ and $K'''\restrict{(-\infty,0)}$. The following constant will be used throughout the paper. We set
\begin{equation}\label{eqn_A_2}
 A = 2\pn{K}{\infty}(2\Lip(K)\pn{K'''}{\infty} + \pn{K}{\infty}\Lip(K''')).
\end{equation}
Given $x,y \in M$, $x
\leq y$, we set
\begin{equation}\label{eqn_Gamma}
 \Gamma_{x,y} = \pn{K}{\infty}^2\pn{K'''}{\infty}(\mu[x,y) + \mu(x,y]),
\end{equation}
and if $y < x$ set $\Gamma_{x,y}=\Gamma_{y,x}$. It will be helpful to note that $(x,y) \mapsto \Gamma_{x,y}$ defines a pseudometric on $\R$, i.e.~this map is non-negative, symmetric and satisfies the triangle inequality, but $\Gamma_{x,y}=0$ does not imply $x=y$.

In addition to the $\Gamma_{x,y}$, we define
\[
 \Lambda_{x,y} = \pn{K}{\infty}^2 \pn{K'''}{\infty}\max\{\mu(\{x\})^3,\mu(\{y\})^3\}.
\]

\begin{lemma}\label{lem_Phi_est_2}
Fix $T \in (0,\infty]$. Let $x,y \in M$, $x<y$, and let $c \in I_{M,T}$ be a solution of \eqref{eqn_ODE_system}. Then 
 \begin{equation}\label{eqn_c_upper_est}
 c(y,t) - c(x,t) \leq (y-x)\me^{At} + \frac{\Gamma_{x,y}}{A}(\me^{At}-1)
 \end{equation}
 and
 \begin{equation}\label{eqn_c_lower_est}
 (y-x)\me^{-At} + \frac{\Lambda_{x,y}}{A}(1-\me^{-At}) \leq c(y,t) - c(x,t),
 \end{equation}
 for all $t \in [0,T]$ (or $t \in \R^+$ if $T=\infty$).
 \end{lemma}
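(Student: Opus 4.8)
The plan is to estimate the time derivative of the gap $\Phi(t) := c(y,t) - c(x,t) > 0$ (positivity for $t > 0$ is part of the hypothesis $c \in I_{M,T}$, and at $t=0$ it equals $y - x > 0$). By the ODE system \eqref{eqn_ODE_system}, I would write
\[
 \Phi'(t) = H(c,y,t)^2 \lint{z \neq y}{}{K'''(c(y,t)-c(z,t))}{\mu(z)} - H(c,x,t)^2 \lint{z \neq x}{}{K'''(c(x,t)-c(z,t))}{\mu(z)},
\]
and split this into two pieces: one where the weight $H(c,\cdot,t)^2$ varies but the $K'''$ integrand is held at the $y$-value, and one where the weight is fixed at $H(c,x,t)^2$ but the $K'''$ integrands differ. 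For the first piece I would use $|H(c,y,t)^2 - H(c,x,t)^2| = |H(c,y,t)-H(c,x,t)|\,|H(c,y,t)+H(c,x,t)| \le 2\pn{K}{\infty} \cdot \Lip(K)\,\Phi(t) \cdot \pn{\mu}{1}$, using the identity \eqref{eqn_H} and $\pn{\mu}{1}\le 1$, together with $|\int K'''\,\md\mu| \le \pn{K'''}{\infty}$. For the second piece, the key observation is that $c(y,t)-c(z,t)$ and $c(x,t)-c(z,t)$ have the same sign whenever $z \ne x,y$ (again by \eqref{eqn_no-crossing}: if $z$ is outside $[x,y]$ both differences have the same sign, and if $z$ lies strictly between then the two arguments are nonzero with opposite signs to each other but — more to the point — we separate that sub-case out), so on the set $\{z : z < x\} \cup \{z : z > y\}$ the Lipschitz bound $\Lip(K''')$ applies directly to give $|K'''(c(y,t)-c(z,t)) - K'''(c(x,t)-c(z,t))| \le \Lip(K''')\,\Phi(t)$, and the contribution of the middle set $(x,y)$ together with the atoms at $x$ and $y$ is controlled crudely by $2\pn{K'''}{\infty}$ times the $\mu$-mass of $[x,y]$-type intervals, which is exactly what feeds the $\Gamma_{x,y}$ term. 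Assembling, one gets a differential inequality of the form $\Phi'(t) \le A\,\Phi(t) + \Gamma_{x,y}$, and Grönwall's inequality yields \eqref{eqn_c_upper_est}. The constant $A$ in \eqref{eqn_A_2} is precisely what drops out of bounding the two pieces this way.

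For the lower estimate \eqref{eqn_c_lower_est} I would run the same decomposition in the opposite direction to obtain $\Phi'(t) \ge -A\,\Phi(t) + (\text{nonneg. term})$. The point of the $\Lambda_{x,y}$ term is that the $K'''$-difference integral, restricted to $z = x$ and $z = y$ (the two endpoint atoms), actually contributes with a favourable sign: near $z=x$ the argument $c(x,t)-c(z,t)$ is small and $K'''$ is positive just to the right of $0$ and negative just to the left (from the explicit formula in Lemma \ref{lem_K'''}, $K'''(s) \approx \frac{1}{2\alpha^4}\sgn(s)$ for small $s$), so that the $z=y$ atom pushes $c(y,t)$ up and the $z=x$ atom pushes $c(x,t)$ down, i.e. it increases the gap; extracting the $\mu(\{x\})$ and $\mu(\{y\})$ masses and the factor $H^2 \ge (\pn{K}{\infty}\min(\mu(\{x\}),\mu(\{y\}))^{?})\dots$ — more carefully, $H(c,x,t) \ge K(0)\mu(\{x\}) = \frac{1}{4\alpha}\mu(\{x\})$ is not quite $\pn{K}{\infty}$ but one bounds below by the single-atom contribution — gives the cubic dependence $\max\{\mu(\{x\})^3,\mu(\{y\})^3\}$ and hence the $\Lambda_{x,y}$ in the statement. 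Then Grönwall (in the form $\Phi(t) \ge \Phi(0)\me^{-At} + \frac{\Lambda_{x,y}}{A}(1-\me^{-At})$) finishes it.

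The main obstacle, and where I would spend the most care, is the sign bookkeeping in the middle region $z \in (x,y)$ and at the endpoint atoms. One has to be honest about which sub-integrals are merely bounded in absolute value (producing the $\Gamma_{x,y}$ "error") and which are genuinely sign-definite (producing the favourable $\Lambda_{x,y}$ in the lower bound); conflating these would either lose the lower estimate or give the wrong constant. A secondary technical point is that $K'''$ is not defined at $0$, so throughout one must verify that the arguments $c(y,t)-c(z,t)$ and $c(x,t)-c(z,t)$ are nonzero for the relevant $z$ — which is exactly guaranteed by $c \in I_{M,T}$ and the exclusion $z \ne x$, $z \ne y$ — and that the "diagonal" contributions are handled via the atoms rather than via $K'''(0)$. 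Finally I would note that the differential-inequality-to-Grönwall step is routine once the inequality $\Phi'(t) \le A\Phi(t) + \Gamma_{x,y}$ (resp. $\ge -A\Phi(t)+\Lambda_{x,y}$) is in hand, since $\Phi$ is $C^1$ in $t$ by the regularity built into $X_{M,T}$.
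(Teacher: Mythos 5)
Your proposal is correct and follows essentially the same route as the paper: the same two-term decomposition of $c_t(y,t)-c_t(x,t)$, the same splitting of the $K'''$-integral according to the position of $z$ relative to $[x,y]$ using the one-sided Lipschitz constant $\Lip(K''')$ and the one-sided limits $K'''(0)_\pm$, the lower bound $H(c,x,t)\geq K(0)\mu(\{x\})=\pn{K}{\infty}\mu(\{x\})$, and a generalised Gr\"onwall inequality applied to the resulting differential inequalities. The only under-specified point is how the $\max$ in $\Lambda_{x,y}$ arises: the paper runs the decomposition twice, once holding the weight $H(c,x,t)^2$ fixed and once holding $H(c,y,t)^2$ fixed, obtaining the lower bounds with $\mu(\{x\})^3$ and $\mu(\{y\})^3$ separately before combining them.
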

 
\begin{proof} 
The inequalities are consequences of a generalisation of the standard form of Gr\"onwall's inequality (see e.g.~\cite{schaeffer2016}*{Section 3.5, Exercise 8}), applied to the next two inequalities:
 \begin{equation}\label{eqn_c_t_upper_est}
c_t(y,t) - c_t(x,t) \leq \Gamma_{x,y} + A(c(y,t)-c(x,t))
 \end{equation}
and
\begin{equation}\label{eqn_c_t_lower_est}
\Lambda_{x,y} - A(c(y,t)-c(x,t)) \leq c_t(y,t) - c_t(x,t).
\end{equation}
To prove \eqref{eqn_c_t_upper_est} and \eqref{eqn_c_t_lower_est}, we introduce some notation to make the ensuing calculations easier to read. Given $t \in \R^+$ and $x,y \in M$, set
\[
\Delta(y,x,t) = c(y,t) - c(x,t).
\]
Now fix $x,y \in M$, $x < y$. Using \eqref{eqn_ODE_system} we have
\begin{align}
& c_t(y,t) - c_t(x,t) \nonumber\\
=& (H(c,y,t)^2 - H(c,x,t)^2)\lint{z \neq y}{}{K'''(\Delta(y,z,t))}{\mu(z)} \nonumber\\
 & + H(c,x,t)^2\left(\lint{z \neq y}{}{K'''(\Delta(y,z,t))}{\mu(z)} - \lint{z \neq x}{}{K'''(\Delta(x,z,t))}{\mu(z)}\right). \label{eqn_diff}
\end{align}
As $\pn{\mu}{1} \leq 1$, we have $|H(c,y,t)|,\,|H(c,x,t)| \leq \pn{K}{\infty}$ and
\[
|H(c,y,t)-H(c,x,t)| \leq \Lip(K)\Delta(y,x,t).
\]
Hence the first term on the right hand side of \eqref{eqn_diff} is bounded above by
\begin{equation}\label{eqn_8}
2\Lip(K)\pn{K}{\infty}\pn{K'''}{\infty}\Delta(y,x,t). 
\end{equation}
The difference of integrals in the second term on the right hand side of \eqref{eqn_diff} can be expressed as
\begin{align}
& \lint{z<x}{}{K'''(\Delta(y,z,t)) - K'''(\Delta(x,z,t))}{\mu(z)} \label{eqn_9}\\
+& \lint{y<z}{}{K'''(\Delta(y,z,t)) - K'''(\Delta(x,z,t))}{\mu(z)} \label{eqn_10}\\
+& \lint{x\leq z<y}{}{\big(K'''(\Delta(y,z,t)) - K'''(0)_+\big) + K'''(0)_+}{\mu(z)} \label{eqn_11}\\
-& \lint{x < z \leq y}{}{\big(K'''(\Delta(x,z,t)) - K'''(0)_-\big) + K'''(0)_-}{\mu(z)}. \label{eqn_12}
\end{align}
We estimate the terms labelled \eqref{eqn_9} -- \eqref{eqn_12}. Because $c \in I$ and $K'''\restrict{(0,\infty)}$ is Lipschitz, \eqref{eqn_9} is bounded above by
\begin{equation}
 \Lip(K''')\Delta(y,x,t)\lint{z<x}{}{}{\mu(z)}. \label{eqn_13}
\end{equation}
Likewise, \eqref{eqn_10} is bounded above by
\begin{equation}
 \Lip(K''')\Delta(y,x,t)\lint{y<z}{}{}{\mu(z)}. \label{eqn_14}
\end{equation}
Regarding \eqref{eqn_11}, we have
\begin{align}
\lint{x\leq z<y}{}{K'''(\Delta(y,z,t)) - K'''(0)_+}{\mu(z)} \leq & \Lip(K''')\lint{x\leq z<y}{}{\Delta(y,z,t)}{\mu(z)}\nonumber\\
\leq & \Lip(K''')\Delta(y,x,t)\lint{x\leq z<y}{}{}{\mu(z)}. \label{eqn_15}
\end{align}
Similarly, regarding \eqref{eqn_12}, we have
\begin{align}
\lint{x< z \leq y}{}{K'''(\Delta(x,z,t)) - K'''(0)_-}{\mu(z)} \leq & \Lip(K''')\lint{x< z \leq y}{}{\Delta(z,x,t)}{\mu(z)}\nonumber\\
\leq & \Lip(K''')\Delta(y,x,t)\lint{x< z \leq y}{}{}{\mu(z)}. \label{eqn_16}
\end{align}
Combining estimates \eqref{eqn_13} -- \eqref{eqn_16} and recalling that $K'''$ is odd and $\pn{\mu}{1} \leq 1$, we obtain 
\begin{align}
& \bigg|\lint{z \neq y}{}{K'''(\Delta(y,z,t))}{\mu(z)} - \lint{z \neq x}{}{K'''(\Delta(x,z,t))}{\mu(z)} \nonumber \\
& - K'''(0)_+(\mu[x,y) + \mu(x,y])\bigg| \leq 2\Lip(K''')\Delta(y,x,t). \label{eqn_diff_2}
\end{align}
Therefore, noting that $K'''(0)_+=-K'''(0)_-=\pn{K'''}{\infty}$, \eqref{eqn_diff}, \eqref{eqn_8} and \eqref{eqn_diff_2} combine to give
\begin{align*}
c_t(y,t) - c_t(x,t) \leq& 2\Lip(K)\pn{K}{\infty}\pn{K'''}{\infty}\Delta(y,x,t) + \Gamma_{x,y} + 2\pn{K}{\infty}^2 \Lip(K''')\Delta(y,x,t) \\
\leq & \Gamma_{x,y} + A\Delta(y,x,t),
\end{align*}
yielding \eqref{eqn_c_t_upper_est}.

To obtain \eqref{eqn_c_t_lower_est}, observe that for all $x$ we have
\[
H(c,x,t) = \lint{-\infty}{\infty}{K(c(x,t)-c(z,t))}{\mu(z)} \geq K(0)\mu(\{x\}) = \pn{K}{\infty}\mu(\{x\}).
\]
Using this inequality, \eqref{eqn_diff}, \eqref{eqn_8} and \eqref{eqn_diff_2}, we arrive at
\begin{align}\label{eqn_c_t_lower_est_1}
c_t(y,t) - c_t(x,t) \geq& -2\Lip(K)\pn{K}{\infty}\pn{K'''}{\infty}\Delta(y,x,t) \nonumber\\
&+ H(c,x,t)^2(K'''(0)_+(\mu[x,y) + \mu(x,y]) - 2\Lip(K''')\Delta(y,x,t))\nonumber\\
\geq& \mu(\{x\})^2 \Gamma_{x,y} - A\Delta(y,x,t).\nonumber\\
\geq& \pn{K}{\infty}^2 \pn{K'''}{\infty}\mu(\{x\})^3 - A\Delta(y,x,t).
\end{align}
By rearranging \eqref{eqn_diff} and repeating the above we obtain
\begin{equation}\label{eqn_c_t_lower_est_2}
 c_t(y,t) - c_t(x,t) \geq \pn{K}{\infty}^2 \pn{K'''}{\infty}\mu(\{y\})^3 - A\Delta(y,x,t).
\end{equation}
Combining \eqref{eqn_c_t_lower_est_1} and \eqref{eqn_c_t_lower_est_2} gives us \eqref{eqn_c_t_lower_est}.
\end{proof}

Now we establish the global existence of particle solutions. For the remainder of this section, we will assume that our starting measure $\mu$ is a non-zero finite weighted sum of Dirac measures, and that $M \supseteq \supp \mu$ is finite. As in the Introduction, we can write $M=\{x^0_1,\dots,x^0_N\}$ where $x^0_1 < \dots < x^0_N$ and $\mu=\sum_{i=1}^N w_i\delta_{x^0_i}$, where $w_i \geq 0$, $1 \leq i \leq N$ and $0 < W := \sum_{i=1}^N w_i \leq 1$. Then we define $x_i(t)=c(x^0_i,t)$, $t \in \R^+$, $1 \leq i \leq N$. In this way, the push-forward measure $h(t)$ becomes
\[
 h(t) = \sum_{i=1}^N  w_i \delta_{x_i(t)} = \sum_{i=1}^N w_i \delta(\cdot - x_i(t)),
\]
just as in \eqref{eq:hNdef}, and \eqref{eqn_ODE_system} becomes \eqref{eq:ivp}. 

To obtain a unique solution of \eqref{eq:ivp} that exists for all $t \in \R^+$, first we show that the velocity field above is Lipschitz on a suitable open subset of $\R^N$.

\begin{lemma}\label{lem_velocity_field}
Let $D=\{\bm{x}\in\R^N: x_i<x_j, i<j\}$, and let $f:(0,\infty)\rightarrow\R$ and $g:(-\infty,0)\rightarrow\R$ be Lipschitz continuous. Let $\bm{v}:D\rightarrow\R^N$ be defined by
\begin{equation}
    v_i(\bm{x}) = \sum_{j=1}^{i-1} w_j f(x_i-x_j) + \sum_{j=i+1}^{N} w_j g(x_i-x_j).
\end{equation}
Then $\bm{v}$ is Lipschitz on $D$.
\end{lemma}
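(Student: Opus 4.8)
The plan is to exhibit $\bm v$ as a finite combination of elementary Lipschitz building blocks, so that the conclusion follows from the standard stability of the Lipschitz property under composition, finite linear combinations, and assembly of coordinate functions. The one structural fact that makes this work is the defining constraint of $D$: if $\bm x \in D$ then $x_1 < \dots < x_N$, so for $j < i$ the number $x_i - x_j$ is strictly positive while for $j > i$ it is strictly negative. Hence on $D$ the argument of $f$ appearing in the first sum never leaves $(0,\infty)$ and the argument of $g$ in the second sum never leaves $(-\infty,0)$; in particular we never evaluate $f$ or $g$ outside its domain, and we never need any hypothesis on the behaviour of these functions at $0$. (When this lemma is later applied with $f = K'''\restrict{(0,\infty)}$ and $g = K'''\restrict{(-\infty,0)}$, this is exactly the mechanism that lets us disregard the fact that $K'''$ is undefined at the origin.)

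For the details: for each pair $i \neq j$ the affine functional $\ell_{ij}(\bm x) = x_i - x_j$ satisfies $|\ell_{ij}(\bm x) - \ell_{ij}(\bm y)| \leq 2\pn{\bm x - \bm y}{\infty}$, so it is Lipschitz as a map $\R^N \to \R$ (the choice of norm on $\R^N$ is immaterial, as they are all equivalent). When $j < i$ this functional maps $D$ into $(0,\infty)$, so $\bm x \mapsto f(\ell_{ij}(\bm x))$ is a composition of a $2$-Lipschitz map into $(0,\infty)$ with the $\Lip(f)$-Lipschitz function $f$, and is therefore $2\Lip(f)$-Lipschitz on $D$; symmetrically, for $j > i$, $\bm x \mapsto g(\ell_{ij}(\bm x))$ is $2\Lip(g)$-Lipschitz on $D$. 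Multiplying by the fixed constant $w_j \geq 0$ and summing the finitely many terms preserves the Lipschitz property, with constant at most the sum of the individual constants; thus each $v_i$ is Lipschitz on $D$ with constant at most $2\big(\Lip(f) + \Lip(g)\big)\sum_{j} w_j$, which in our setting (where $\sum_j w_j \leq 1$) is at most $2(\Lip(f)+\Lip(g))$. Finally, a map into $\R^N$ is Lipschitz precisely when each of its $N$ coordinate functions is, so $\bm v$ is Lipschitz on $D$.

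I do not expect a genuine obstacle here: once the decomposition above is in place, everything is routine Lipschitz calculus. The only point that is not automatic — and the only place the hypotheses are really used — is the observation in the first paragraph that the ordering built into $D$ confines every argument of $f$ and of $g$ to the correct open half-line, so that the compositions are legitimate and the uniform Lipschitz constants of $f$ on $(0,\infty)$ and of $g$ on $(-\infty,0)$ are exactly what is available. In the application this is the device by which one works on the open ordered region $D$ — where consecutive differences are positive pointwise, though not bounded away from $0$ uniformly — rather than excising a neighbourhood of the singular set, using that $K'''$ restricted to each of $(0,\infty)$ and $(-\infty,0)$ is honestly Lipschitz with the common constant $\Lip(K''')$ fixed before Lemma~\ref{lem_Phi_est_2}.
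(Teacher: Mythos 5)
Your proof is correct and follows essentially the same route as the paper: both identify the key point that the ordering built into $D$ keeps $x_i - x_j$ in $(0,\infty)$ for $j<i$ and in $(-\infty,0)$ for $j>i$, then invoke the Lipschitz constants of $f$ and $g$ and sum up. The only cosmetic difference is that you frame the estimate via composition and the $\ell^\infty$ norm, while the paper computes directly in the $\ell^1$ norm to obtain the explicit constant $2LN$; both are routine and equivalent.
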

\begin{proof}
Let $\bm{x},\bm{y}\in D$, 
\begin{align*}
    \|\bm{v}(\bm{x}) - \bm{v}(\bm{y})\|_1 &= \sum_{i=1}^N |v_i(\bm{x}) - v_i(\bm{y})|, \\
    &\leq \sum_{i=1}^N \sum_{j=1}^{i-1} w_j|f(x_i-x_j)-f(y_i-y_j)| + \sum_{i=1}^N \sum_{j=i+1}^{N} w_j|g(x_i-x_j)-g(y_i-y_j)|.
\end{align*}
Since $g$ and $f$ are Lipschitz on their respective domains, there exists $L>0$ such that
\begin{align*}
    \|\bm{v}(\bm{x}) - \bm{v}(\bm{y})\|_1 &\leq \sum_{i=1}^N \sum_{\substack{j=1\\j\neq i}}^N w_jL|x_i-x_j-y_i+y_j|, \\ 
    &\leq \sum_{i=1}^N \sum_{\substack{j=1\\j\neq i}}^N w_jL(|x_i-y_i|+|x_j-y_j|) 
    \leq 2LN\|\bm{x}-\bm{y}\|_1. \tag*{\qedhere}
\end{align*}
\end{proof}

\begin{example}\label{ex:lip}
The velocity field defined by
\begin{equation}
    v_i(\bm{x}) = \sum_{\substack{j=1\\ j\neq i}}^N w_j K'''(x_i-x_j) = \sum_{j=1}^{i-1} w_j K'''\restrict{(0,\infty)}(x_i-x_j) + \sum_{j=i+1}^{N} w_j K'''\restrict{(-\infty,0)}(x_i-x_j).
\end{equation}
is Lipschitz on $D$ by Lemma \ref{lem_velocity_field}. Since $\bm{x} \mapsto \sum_{j=1}^N w_jK(x_i-x_j)$ is bounded and Lipschitz on $\R^N\supseteq D$, it follows that the velocity field defined in \eqref{eq:ivp2} is bounded and Lipschitz on $D$.
\end{example}

\begin{proposition}\label{prop_global_particle}
There exists a unique solution $\map{\bm{x}}{\R^+}{D}$ of \eqref{eq:ivp}. Therefore (and equivalently), the map $c \in I_M$ given by $c(x^0_i,t)=x_i(t)$, $t \in \R^+$, $1 \leq i \leq N$, is the unique solution of \eqref{eqn_ODE_system}.
\end{proposition}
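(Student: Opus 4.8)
The plan is to combine the local Lipschitz theory of ODEs (Picard--Lindel\"of) with the quantitative lower estimate from Lemma \ref{lem_Phi_est_2} to rule out finite-time blow-up and finite-time collisions. First I would invoke Example \ref{ex:lip}: the velocity field $\bm{v}$ of \eqref{eq:ivp2} is bounded and Lipschitz on the open set $D$, so the Picard--Lindel\"of theorem gives, for each initial datum $\bm{x}^0=(x^0_1,\dots,x^0_N)\in D$, a unique solution $\bm{x}(\cdot)$ defined on a maximal interval $[0,T_{\max})$ with $T_{\max}\in(0,\infty]$, and $\bm{x}(t)\in D$ for $t\in[0,T_{\max})$. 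It remains to show $T_{\max}=\infty$.

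Suppose for contradiction that $T_{\max}<\infty$. The standard escape lemma for ODEs says that as $t\uparrow T_{\max}$ the solution must either leave every compact subset of $D$ or approach $\partial D$. Boundedness of $\bm{v}$ on $D$ bounds $\|\bm{x}(t)-\bm{x}^0\|_1\leq \|\bm{v}\|_\infty\, T_{\max}$, so $\bm{x}(t)$ stays in a fixed compact subset of $\R^N$; hence the only way the solution can fail to extend is that $\bm{x}(t)$ approaches $\partial D$, i.e. $\liminf_{t\uparrow T_{\max}}(x_{i+1}(t)-x_i(t))=0$ for some $i$. Here is where Lemma \ref{lem_Phi_est_2} enters: in the present finite-measure setting we may apply \eqref{eqn_c_lower_est} on $[0,T]$ for every $T<T_{\max}$ with $x=x^0_i$, $y=x^0_{i+1}$, noting $c\restrict{M\times[0,T]}\in I_{M,T}$ is a solution of \eqref{eqn_ODE_system}. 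Since $\mu(\{x^0_i\})=w_i$ and $\mu(\{x^0_{i+1}\})=w_{i+1}$ are \emph{fixed positive} numbers, $\Lambda_{x^0_i,x^0_{i+1}}=\pn{K}{\infty}^2\pn{K'''}{\infty}\max\{w_i^3,w_{i+1}^3\}>0$, and \eqref{eqn_c_lower_est} yields
\[
x_{i+1}(t)-x_i(t) \;\geq\; (x^0_{i+1}-x^0_i)\me^{-At} + \frac{\Lambda_{x^0_i,x^0_{i+1}}}{A}\bigl(1-\me^{-At}\bigr) \;\geq\; \frac{\Lambda_{x^0_i,x^0_{i+1}}}{A}\bigl(1-\me^{-AT_{\max}}\bigr)\;>\;0
\]
for all $t\in[0,T_{\max})$, a uniform positive lower bound on every consecutive gap. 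This contradicts the approach to $\partial D$, so $T_{\max}=\infty$.

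Finally I would translate back to the statement about $c$: setting $c(x^0_i,t)=x_i(t)$ defines an element of $X_M$ (each $c(x^0_i,\cdot)$ is $C^1$, and $c(\cdot,t)$ is a function on a finite set hence Borel), the no-crossing condition \eqref{eqn_no-crossing} holds on all of $\R^+$ precisely because $\bm{x}(t)\in D$ for all $t$, so $c\in I_M$, and \eqref{eqn_ODE_system} is exactly \eqref{eq:ivp} rewritten (using \eqref{eqn_3rd_deriv} and the fact that on $D$ the excluded term $j=i$ corresponds to $c(x,t)=c(z,t)$, which is omitted in the integral in \eqref{eqn_ODE_system} anyway). Uniqueness of $c$ is immediate from uniqueness of $\bm{x}$.

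I expect the main obstacle to be the no-blow-up/no-collision step rather than the existence step: one must be careful that the escape lemma is applied correctly (boundedness of $\bm{v}$ handles escape to infinity, and Lemma \ref{lem_Phi_est_2} handles collisions) and that Lemma \ref{lem_Phi_est_2} is genuinely applicable on each $[0,T]$ with $T<T_{\max}$ — which it is, since the restriction of the solution lies in $I_{M,T}$ and solves \eqref{eqn_ODE_system} there. Everything else is bookkeeping. A minor point worth a sentence is noting that $w_i>0$ is needed for the argument; if some $w_i=0$ the corresponding particle is still well-defined and cannot collide with its neighbours because $\Lambda$ involves a \emph{max}, so at least one of the two endpoints contributes a positive term — in fact for consecutive indices one only needs $\max\{w_i,w_{i+1}\}>0$, and if both were zero that particle pair could be removed from $M$ without affecting $h$.
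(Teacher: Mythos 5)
Your overall strategy matches the paper's proof exactly: obtain a unique maximal solution on $[0,T_{\max})$ via the local Lipschitz theory (the paper cites \cite{schaeffer2016}*{Proposition 4.1.1} where you invoke Picard--Lindel\"of, which is the same thing), rule out escape to infinity via boundedness of the velocity field, and rule out collisions via the lower estimate \eqref{eqn_c_lower_est} of Lemma \ref{lem_Phi_est_2}. The translation back to $c\in I_M$ is also handled the same way. So the route is not genuinely different; but there is one concrete error worth flagging.

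The chain of inequalities
\[
x_{i+1}(t)-x_i(t) \;\geq\; (x^0_{i+1}-x^0_i)\me^{-At} + \frac{\Lambda_{x^0_i,x^0_{i+1}}}{A}\bigl(1-\me^{-At}\bigr) \;\geq\; \frac{\Lambda_{x^0_i,x^0_{i+1}}}{A}\bigl(1-\me^{-AT_{\max}}\bigr)
\]
is false in the second step. Writing $a=x^0_{i+1}-x^0_i>0$ and $b=\Lambda_{x^0_i,x^0_{i+1}}/A\geq 0$, the middle expression is $a\me^{-At}+b(1-\me^{-At})$, a convex combination of $a$ and $b$. If $a<b$ this is \emph{increasing} in $t$, with infimum $a$ at $t=0$, and $a$ need not dominate $b(1-\me^{-AT_{\max}})$ (take $a$ tiny, $b$ and $T_{\max}$ moderate). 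So you cannot obtain a uniform positive lower bound this way. The correct (and simpler) choice, which is what the paper uses, is to drop the nonnegative $\Lambda$ term entirely: $x_{i+1}(t)-x_i(t)\geq (x^0_{i+1}-x^0_i)\me^{-At}\geq (x^0_{i+1}-x^0_i)\me^{-AT_{\max}}>0$. This also makes your closing worry about the case $w_i=w_{i+1}=0$ evaporate --- no positivity of weights is needed for no-crossing, since the estimate used only requires $x^0_{i+1}>x^0_i$, which holds because $M$ is a set of $N$ distinct points. The $\Lambda$ term is the right tool elsewhere in the paper (for showing curves emanating from atoms of $\mu$ genuinely separate, cf.\ Remark \ref{rem_discontinuity}), but it is not the right tool here, and reaching for it introduced the gap.
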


\begin{proof}
According to \cite{schaeffer2016}*{Proposition 4.1.1}, there exists a unique maximal solution $\map{\bm{x}}{[0,T)}{D}$ of \eqref{eq:ivp}, where $T \in (0,\infty]$. We claim that $T=\infty$, i.e.~this solution exists for all $t \in \R^+$. For a contradiction, we suppose $T<\infty$. By the Mean Value Theorem, we have
\begin{equation}\label{eqn_Lipschitz_time_particle}
|x_i(t)-x_i(s)| \leq \pn{K}{\infty}^2\pn{K'''}{\infty}|t-s|,
\end{equation}
whenever $s,t \in [0,T)$. From this fact it follows that $\bm{x}$ can be continuously extended to $[0,T]$. Moreover, by Lemma \ref{lem_Phi_est_2}, we have $x_j(t) - x_i(t) \geq (x^0_j-x^0_i)\me^{-AT}$ whenever $t \in [0,T]$ and $1 \leq i < j \leq N$. In particular, $\bm{x}(T) \in D$. However, this contradicts \cite{schaeffer2016}*{Theorem 4.1.2} because the image of $[0,T]$ under $\bm{x}$ is a compact subset of $D$.
\end{proof}

\begin{remark}
 Another way to prove Proposition \ref{prop_global_particle} is to show that $D$ (as given in Lemma \ref{lem_velocity_field}) is a trapping region, and then appeal to e.g.~\cite{schaeffer2016}*{Corollary 4.2.4}. Showing that $D$ is a trapping region doesn't require the full force of Lemma \ref{lem_Phi_est_2} -- this will be used in later sections.
\end{remark}

\section{Global existence and regularity of weak solutions}\label{sect_existence}

Fix $\mu \in \mathcal{M}^+(\R)\setminus\{0\}$ satisfying $\pn{\mu}{1} \leq 1$, and set $I=I_\R$. The vast majority of this section is devoted to proving the existence of a solution $c \in I$ of \eqref{eqn_ODE_system}, corresponding to $\mu$. By Theorem \ref{theorem_ODE}, the associated map $\map{h}{\R^+}{\mathcal{M}^+(\R)}$ of push-forward measures will be a solution of \eqref{eq:weak}.

In order to find $c$, we will construct an appropriate sequence of approximating measures $(\mu_n) \subseteq \mathcal{M}^+(\R)\setminus\{0\}$ having finite supports included in certain finite sets $M_n$, to which we can apply Proposition \ref{prop_global_particle}. This will yield a sequence of curves $c_n \in I_{M_n}$, corresponding to the $\mu_n$, that solve \eqref{eqn_ODE_system}. After a limiting process, we obtain our solution $c \in I$ of \eqref{eqn_ODE_system}, corresponding to $\mu$. 

At the end of the section, we establish the regularity of the corresponding map $\bar{h}$. We will show that $\bar{h}(t) = K * h(t) \in H^3(\R)$ for all $t\in \R^+$, and that $\bar{h}\in C_b^{0,\frac{1}{2}}(\R^+;H^3(\R))$, which is the space of $\frac{1}{2}$-H\"older continuous functions $\map{u}{\R^+}{H^3(\R)}$ that are bounded in the sense that $\sup_{t \in \R^+}\n{u(t)} < \infty$.

We set about establishing existence. The choice of sequence $(\mu_n)$ will require a little care. We must single out those points $x \in M$ whose corresponding singletons are atoms of $\mu$, i.e.~$\mu(\{x\})>0$. Let 
\[
 P \;:=\; \set{x \in M}{\mu(\{x\}) > 0}
\]
denote the set of all such points. This set is at most countably infinite in size. 

The existence result, Theorem \ref{theorem_general_existence}, is preceded by a number of technical statements.

\begin{lemma}\label{lem_accn_points}
 If $x \in \R\setminus P$ then
 \[
  \lim_{y \to x_+} \Gamma_{x,y} = 0 \quad\text{and}\quad \lim_{y \to x_-} \Gamma_{x,y} = 0.
 \]
\end{lemma}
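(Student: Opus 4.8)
The plan is to unwind the definition of $\Gamma_{x,y}$ in \eqref{eqn_Gamma} and observe that the only factor depending on $y$ is $\mu[x,y) + \mu(x,y]$, since $\pn{K}{\infty}^2\pn{K'''}{\infty}$ is a fixed constant. Thus it suffices to show that $\mu[x,y) + \mu(x,y] \to 0$ as $y \to x_+$ (and similarly as $y \to x_-$), under the hypothesis $x \notin P$, i.e.~$\mu(\{x\})=0$.

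For the right-hand limit, first I would take a decreasing sequence $y_n \downarrow x$ and consider the nested intervals. For the half-open intervals $[x,y_n)$, the sets decrease to $\bigcap_n [x,y_n) = \{x\}$, so by continuity of the finite measure $\mu$ from above, $\mu[x,y_n) \to \mu(\{x\}) = 0$. For the intervals $(x,y_n]$, the sets decrease to $\bigcap_n (x,y_n] = \emptyset$, so again by continuity from above $\mu(x,y_n] \to \mu(\emptyset) = 0$. Since this holds for every sequence $y_n \downarrow x$, we conclude $\lim_{y \to x_+}(\mu[x,y) + \mu(x,y]) = 0$, and hence $\lim_{y \to x_+}\Gamma_{x,y} = 0$. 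The argument for the left-hand limit $y \to x_-$ is entirely symmetric: $[x,y)$ should be read as $(y,x]$ type intervals per the convention $\Gamma_{x,y}=\Gamma_{y,x}$ when $y<x$, and the sets $(y_n,x]$ and $[y_n,x)$ shrink to $\{x\}$ and $\emptyset$ respectively as $y_n \uparrow x$.

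There is no real obstacle here; the only point requiring any care is making sure that the continuity-from-above argument is invoked correctly, which is legitimate because $\mu$ is a finite measure (so the sets have finite measure and the hypothesis of downward continuity is met). The hypothesis $x \notin P$ is used exactly once, to kill the term $\mu(\{x\})$ arising as the limit of $\mu[x,y_n)$. I would write this up in a few lines, stating the right-hand case in detail and remarking that the left-hand case follows by symmetry.
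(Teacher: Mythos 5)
Your proof is correct and follows essentially the same route as the paper: both arguments reduce to continuity from above of the finite measure $\mu$ on the nested intervals $[x,y_n)$ and $(x,y_n]$, using $x\notin P$ (i.e.~$\mu(\{x\})=0$) to eliminate the only potentially nonzero limit; the paper simply states the two limits without spelling out the sequences, whereas you make the continuity-from-above step explicit.
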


\begin{proof}
Let $x \in \R\setminus P$. Then
\[
 \lim_{y \to x_+} \Gamma_{x,y} = \lim_{y \to x_+} \mu([x,y)) + \mu((x,y]) = \mu(\{x\}) + \mu(\varnothing) = 0.
\]
The other limit is established similarly.
\end{proof}

Set $F=\Q \cup P$. The inclusion of the set $P$ of `problematic points' in $F$ will be very important in what follows. It turns out that our solutions $c$ will have the property that $c(\cdot,t)$ is discontinuous at $x$ if and only if $x \in P$.

Let $(M_n)_{n=1}^\infty$ be any increasing sequence of non-empty finite subsets of $F$, such that $\bigcup_n M_n = F$. We associate with each finite set $M_n$ a finite weighted sum of Dirac measures $\mu_n \in \mathcal{M}^+(\R)\setminus\{0\}$ having support included in $M_n$ and satisfying $\pn{\mu_n}{1} \leq \pn{\mu}{1} \leq 1$. Given $n \in \N$ and $x \in M_n$, set
\[
\omega_n(x) = \begin{cases}
\mu[x,\infty) & \text{if $x = \max M_n$}\\
\mu[x,x') & \text{if $x < \max M_n$ and $x' \in M_n$ is minimal such that $x<x'$}. 
\end{cases}
\]
Then define
\[
 \mu_n = \sum_{x \in M_n} \omega_n(x)\delta_x.
\]
To ensure that each $\mu_n$ is non-zero, we impose an additional modest condition on the $M_n$. Let $y \in \supp \mu$ and fix $x \in F$, $x<y$. We will insist that $x \in M_n$ for all $n$. Then $\mu([x,\infty)) \geq \mu((x,\infty)) > 0$ because $y \in \supp \mu$. This ensures that $\mu_n \neq 0$.

Let $C_0(\R)$ denote the Banach space of continuous functions $\map{f}{\R}{\R}$ that vanish at infinity, equipped with the supremum norm. We endow $\mathcal{M}(\R)$ with the $w^*$-topology via the identification $\mathcal{M}(\R)\equiv C_0(\R)^*$. It is evident from the construction of the $\mu_n$ that $w^*$-$\lim_n \mu_n = \mu$. However, we will require some stronger convergence behaviour which is furnished by the next result.

\begin{lemma}\label{lem_strong_convergence}
Let $U \subseteq \R$ be a union of finitely many open intervals and let $\map{f}{\R}{\R}$ be a bounded map whose points of discontinuity (if any) lie in $F \cup (\R\setminus U)$. Then
 \[
  \lim_n \lint{x \in U}{}{f(x)}{\mu_n(x)} = \lint{x \in U}{}{f(x)}{\mu(x)}.
 \]
\end{lemma}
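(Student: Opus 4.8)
The plan is to reduce the statement to a countable-sum decomposition over the intervals of $U$ and then exploit the explicit piecewise-constant structure of the $\mu_n$ relative to $\mu$. First I would observe that since $U$ is a finite union of open intervals, it suffices by additivity to treat the case where $U=(a,b)$ is a single bounded open interval (the unbounded cases are handled the same way after truncating $f$ by a cutoff, using boundedness of $f$ and finiteness of $\mu$); the endpoints $a,b$ lie in $\R\setminus U$, so $f$ is allowed to be discontinuous there, which is why we must work on the open interval. For each fixed $n$, the key structural fact is that $\mu_n$ is obtained by "collapsing" the mass $\mu$ carries on each gap $[x,x')$ (for consecutive points $x<x'$ of $M_n$) onto the left endpoint $x$. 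So $\lint{x\in U}{}{f}{\mu_n}$ is a finite sum $\sum_x \mu([x,x'))\,f(x)$ over those $x \in M_n \cap U$ whose associated gap meets $U$, whereas $\lint{x\in U}{}{f}{\mu}= \sum_x \lint{[x,x')\cap U}{}{f(z)}{\mu(z)}$. The difference is therefore controlled by $\sum_x \lint{[x,x')\cap U}{}{|f(z)-f(x)|}{\mu(z)}$, plus negligible boundary-gap corrections of size $O(\mu)$ on the at most two gaps straddling $a$ or $b$.

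Next I would exploit that $M_n \uparrow F \supseteq \Q$, so the mesh of the partition of $U$ induced by $M_n$ shrinks: for any $\varepsilon>0$ and any $\delta>0$, eventually every gap $[x,x')$ meeting $U$ has length $<\delta$ (because $\Q$ is dense, so between any two reals there is eventually a point of $M_n$). Now I split the estimate of $\sum_x \lint{[x,x')\cap U}{}{|f(z)-f(x)|}{\mu(z)}$ according to whether $f$ is "well-behaved" near $x$. At a point $z_0 \in U$ where $f$ is continuous, $|f(z)-f(x)|$ is small once the gap containing $z_0$ has small length; at a point of discontinuity $z_0 \in F \cap U$, since $F \cap U$ is countable and the gap left-endpoints $x$ eventually coincide with such points, we instead use that the total $\mu$-mass near $F$ that "matters" can be made small. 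Concretely, enumerate $F \cap U = \{q_1,q_2,\dots\}$; given $\varepsilon>0$ choose finitely many disjoint open intervals $V_1,\dots,V_m$ covering $q_1,\dots,q_k$ (for $k$ large) with $\mu(\bigcup V_j) < \varepsilon$ and $\mu(\overline{V_j}\setminus V_j)$ handled by the fact that only countably many points are atoms — then on $U \setminus \bigcup V_j$ the function $f$ is continuous except at the finitely many remaining $q_{k+1},\dots$, which we shrink further, and on the complement of these exceptional sets $f$ is uniformly continuous on a compact set, so the gap-oscillation bound gives $\varepsilon$ times $\mu(U)$. Summing the contributions: a uniformly small oscillation term on the "good" part (bounded by $\varepsilon \mu(U)$), plus $2\n{f}{\infty}$ times the $\mu$-mass of the "bad" neighbourhoods (bounded by $2\n{f}{\infty}\varepsilon$), plus the two boundary gaps which vanish as their lengths go to $0$ since $\mu$ has no atoms at $a,b$ unless $a,b\in P$ — but even then the mass on $[x,x')\cap U$ of a shrinking gap at the boundary tends to $\mu$ of a shrinking half-open interval, which goes to $0$ by Lemma \ref{lem_accn_points}-type reasoning (or directly from continuity of measure, noting $a,b\notin U$ so the relevant interval is $(a,x')$ or $(x,b)$, open at the atom end). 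Letting $\varepsilon\to 0$ finishes it.

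The main obstacle is the bookkeeping at the discontinuity set $F \cap U$: one cannot simply say "$f$ is continuous $\mu$-a.e." because $f$ may be discontinuous precisely on a $\mu$-large set such as all of $P$. The resolution is that the construction deliberately put $P$ (and $\Q$) inside $F$, and the $\mu_n$ are built using points of $F$, so a gap left-endpoint $x$ that sits at an atom of $\mu$ carries $f(x)$ with the exact weight $\mu([x,x'))$, meaning that atom's contribution to $\mu_n$ converges to its contribution to $\mu$ by itself (the term $\mu(\{x\})f(x)$ appears on both sides in the limit). So the discontinuities in $P$ are matched exactly in the limit, and the remaining discontinuities in $F\setminus P$ are on a $\mu$-null... no — rather, the remaining discontinuities in $F$ are countable and one controls the non-atomic part of $\mu$ near them by outer regularity. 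I would isolate this as the crux: decompose $f$'s behaviour into (i) the finitely many atoms one tracks exactly, (ii) a uniformly-continuous bulk, and (iii) an $\varepsilon$-small-mass remainder; clean execution of (i) versus (iii) is where care is needed, and everything else is routine $\varepsilon$-pushing and an appeal to continuity of the measure $\mu$ from above/below.
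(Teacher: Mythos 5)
Your setup is the right one and matches the paper's: reduce to a single interval $(a,b)$, observe that $\mu_n$ concentrates the mass $\mu[x,x')$ at the left endpoint $x$, and split the error into a gap-oscillation sum $\sum_x \int_{[x,x')\cap U}|f(z)-f(x)|\,\md\mu(z)$ plus a boundary correction. The trouble is in how you propose to control the oscillation sum. Your first attempt --- cover $F\cap U$ by finitely many open intervals of total $\mu$-mass $<\ep$ and invoke uniform continuity of $f$ on the compact complement --- breaks down immediately, as you noticed mid-paragraph, because an atom $q_i\in P$ with $\mu(\{q_i\})\geq\ep$ admits no small-mass open neighbourhood. Your self-correction (track atoms exactly; handle the continuous part $\mu_c$ by outer regularity) is the right instinct, but the remaining uniformity claim still has a gap: even on the good compact set $G=[-J,J]\setminus O$ (with $O\supseteq F$ open and $\mu_c(O)<\ep$), uniform continuity of $f\restrict{G}$ does \emph{not} bound $|f(z)-f(x)|$, because the gap left-endpoint $x$ lies in $M_n\subseteq F\subseteq O$ and hence generically \emph{outside} $G$. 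What you actually need is continuity of $f$ at $z$ as a function on $\R$, which gives $|f(z)-f(x)|<\ep$ only for a $z$-dependent mesh threshold $\delta(z)$; the local oscillation of $f$ is merely upper semicontinuous, so neither Dini nor a Lebesgue-number argument will make $\delta(z)$ uniform over $G$.

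The clean fix --- and the paper's route --- is to drop uniformity entirely and argue pointwise plus Dominated Convergence. Define $f_n(z)=f(x)$ where $x$ is the largest element of $U\cap M_n$ not exceeding $z$ (and $f_n(z)=0$ to the left of $\min U\cap M_n$). Then $f_n(z)\to f(z)$ for \emph{every} $z\in U$: if $z\in F$ then eventually $z\in M_n$ so $f_n(z)=f(z)$ exactly (this is precisely why $P\subseteq F$ matters), and if $z\notin F$ then $f$ is continuous at $z$ and the approximating left-endpoints converge to $z$ from below. Boundedness and finiteness of $\mu$ give $\int_U f_n\,\md\mu\to\int_U f\,\md\mu$ with no splitting into atomic and continuous parts, no outer regularity, and no uniform-continuity step. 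The remaining discrepancy $\int_U f\,\md\mu_n-\int_U f_n\,\md\mu$ lives entirely on the single right-hand boundary gap and is bounded by $|f(x_n)|\,\mu([b,y_n))\to 0$ where $y_n\downarrow b$; contrary to your remark, the left endpoint $a$ causes no corresponding discrepancy under this bookkeeping, so there is only one boundary gap to control.
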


\begin{proof}
First we dismiss the trivial case $U=\varnothing$, so that hereafter $U\neq\varnothing$. Next we tackle the special case that $U$ is the open interval $(a,b)$. By the density of $F$, there exists $N_0 \in \N$ such that $U \cap M_{N_0}$ is non-empty. Given $n \geq N_0$, define $\map{f_n}{\R}{\R}$ by 
\[
 f_n(x) = \begin{cases}0 &\text{if }x < \min U \cap M_n \\ f(x') &\text{if }\min U \cap M_n \leq x \text{ and }x' \in U \cap M_n \cap (-\infty,x] \text{ is maximal}.
 \end{cases}
\]
We show that $\lim_n f_n(x)=f(x)$ for all $x \in U$. Indeed, let $x \in U$ and $\ep>0$. If $x \in F$ then $x \in M_N$ for some $N \geq N_0$, whence $f_n(x)=f(x)$ whenever $n \geq N$. If $x \notin F$ then by hypothesis $f$ must be continuous at $x$, so there exists $\delta>0$ such that $a \leq x-\delta$ and $|f(x)-f(x')| < \ep$ whenever $|x-x'| < \delta$. Again by the density of $F$, we can find $N \geq N_0$ and $x' \in M_N$ such that $0 \leq x - x' < \delta$. Then $n \geq N$ implies $x' \in U \cap M_n \cap (-\infty,x]$, and thus $|f(x)-f_n(x)| < \ep$. By the Dominated Convergence Theorem we conclude that
\begin{equation}\label{eqn_strong-convergence_1}
 \lim_n \lint{x \in U}{}{f_n(x)}{\mu(x)} = \lint{x \in U}{}{f(x)}{\mu(x)}.
\end{equation}
Given $n \geq N_0$, let $x_n = \max U \cap M_n < b$ and define
\[
y_n = \begin{cases} \min [b,\infty) \cap M_n & \text{if }[b,\infty) \cap M_n\text{ is non-empty} \\
           \infty & \text{otherwise.}
          \end{cases}
\]
Now observe that
\begin{align}
 \left| \lint{x \in U}{}{f(x)}{\mu_n(x)} - \lint{x \in U}{}{f_n(x)}{\mu(x)} \right| \;\leq & |f(x_n)|\big(\mu([x_n,y_n)-\mu([x_n,b))\big) \nonumber\\
 =& |f(x_n)|\mu([x_n,y_n)\setminus[x_n,b)).\label{eqn_strong-convergence_2}
\end{align}
Given \eqref{eqn_strong-convergence_1}, to finish the special case it is sufficient to show that the right hand side of \eqref{eqn_strong-convergence_2} tends to $0$, but this is evident as $(y_n)$ is a decreasing sequence that converges to $b$. This completes the special case.

In the general case, we simply note that union of finitely many open intervals can be written as a union of finitely many pairwise disjoint open intervals, write the integrals as finite sums of integrals over these new intervals, and appeal to the above.
\end{proof}

For the next corollary we need some additional terminology. Following \eqref{eqn_Gamma}, given the measures $\mu_n$ we define $\Gamma_{n,x,y}$ in the corresponding way: given $x,y \in M$, $x \leq y$, we set 
\[
 \Gamma_{n,x,y} = \pn{K}{\infty}^2\pn{K'''}{\infty}(\mu_n[x,y) + \mu_n(x,y]),
\]
and if $y < x$ set $\Gamma_{n,x,y}=\Gamma_{n,y,x}$.

\begin{corollary}\label{lem_Gamma_n}
 Given $x,y \in F$, $x<y$, we have $\lim_n \Gamma_{n,x,y} = \Gamma_{x,y}$.
\end{corollary}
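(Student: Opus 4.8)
The plan is to deduce everything from Lemma~\ref{lem_strong_convergence}, applied to a suitable indicator function. Up to the common positive factor $\pn{K}{\infty}^2\pn{K'''}{\infty}$, the quantities $\Gamma_{n,x,y}$ and $\Gamma_{x,y}$ differ only in that $\mu_n$ appears in place of $\mu$; so it suffices to show that $\mu_n[x,y) \to \mu[x,y)$ and $\mu_n(x,y] \to \mu(x,y]$ as $n \to \infty$. Writing $f = \ind{[x,y)} + \ind{(x,y]}$, this amounts to showing $\lint{t \in \R}{}{f(t)}{\mu_n(t)} \to \lint{t \in \R}{}{f(t)}{\mu(t)}$ (one could equally run the argument twice with the two indicators separately).

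First I would take $U$ to be any open interval containing $[x,y]$ — for instance $U = \R$. Then $f$ is bounded (by $2$) and supported inside $U$, so $\lint{t \in U}{}{f(t)}{\nu(t)} = \lint{t \in \R}{}{f(t)}{\nu(t)}$ for $\nu \in \{\mu,\mu_n\}$, and the only possible points of discontinuity of $f$ are $x$ and $y$. Since $x,y \in F$ by hypothesis, the discontinuity set of $f$ is contained in $F \subseteq F \cup (\R\setminus U)$, so the hypotheses of Lemma~\ref{lem_strong_convergence} are met. The lemma then yields the required convergence of integrals, and multiplying the limit by $\pn{K}{\infty}^2\pn{K'''}{\infty}$ gives $\lim_n \Gamma_{n,x,y} = \Gamma_{x,y}$.

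There is no substantive obstacle here: this is a short bookkeeping reduction to the preceding lemma, and the only point needing a moment's care is the verification that the discontinuity set $\{x,y\}$ of $f$ lies in $F$, which is immediate from $x,y \in F$. (An alternative, slightly longer route avoids the endpoint indicators: decompose $[x,y) = \{x\} \cup (x,y)$ and $(x,y] = (x,y) \cup \{y\}$, obtain $\mu_n(x,y) \to \mu(x,y)$ from Lemma~\ref{lem_strong_convergence} with the constant function $1$ on $U = (x,y)$, and separately note that for $n$ large $\mu_n(\{x\}) = \omega_n(x) = \mu[x,x')$, where the minimal $x' \in M_n$ with $x < x'$ decreases to $x$ by density of $F$, whence $\mu_n(\{x\}) \to \mu(\{x\})$, and similarly at $y$; but the indicator-function route above is cleaner.)
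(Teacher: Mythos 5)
Your proof is correct and matches the paper's: both reduce to Lemma~\ref{lem_strong_convergence} applied to indicator functions whose only discontinuities are $x,y \in F$ (the paper applies it to $\ind{[x,y)}$ and $\ind{(x,y]}$ separately, whereas you bundle them into one function — an immaterial variation). The key observation in both cases is that $x, y \in F$ places the discontinuity set inside $F$, so the hypotheses of the lemma are satisfied.
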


\begin{proof}
We show that $\lim_n \mu_n[x,y) = \mu[x,y)$ and $\lim_n \mu_n(x,y] = \mu(x,y]$. These limits follow immediately by applying Lemma \ref{lem_strong_convergence} to the indicator functions $\ind{[x,y)}$ and $\ind{(x,y]}$, which are bounded functions discontinuous only at $x,y \in F$.
\end{proof}

Our final lemma before Theorem \ref{theorem_general_existence} will be needed to establish the convergence of certain integrals appearing in the proof of the latter. It has echos of the classical theorem of Lusin but is tailored to our particular needs.

\begin{lemma}\label{lem_integral_conv}
 Let $\map{f}{\R}{\R}$ be an increasing map whose points of discontinuity (if any) lie in $F$. Then given $\ep>0$, there exists $N \in \N$ and a union $U$ of finitely many open intervals such that
 \begin{enumerate}
  \item $\mu(U) < \ep$;
  \item given $y \in \R\setminus U$, there exist $x,z \in M_N$ such that $x \leq y \leq z$ and $f(z)-f(x) < \ep$.
 \end{enumerate}
 \end{lemma}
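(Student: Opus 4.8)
The plan is to build the set $U$ and index $N$ directly from the discontinuities and steepness of $f$, using the density of $F$ and the countability of the discontinuity set. Since $f$ is increasing, its set $D_f$ of discontinuity points is at most countable, and $D_f \subseteq F$ by hypothesis. Enumerate $D_f = \{d_1, d_2, \dots\}$ (finite or infinite). At each $d_k$ the jump is $f(d_k^+) - f(d_k^-) \geq 0$; choose a small open interval $V_k = (d_k - \rho_k, d_k + \rho_k)$ around $d_k$ with $\mu(V_k) < \ep 2^{-k-1}$ (possible since $\mu(\{d_k\})$ may be positive but $\mu$ is a finite measure, so $\mu(V_k) \downarrow \mu(\{d_k\})$... wait, that doesn't go to zero). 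The correct move: we cannot shrink $\mu(V_k)$ below $\mu(\{d_k\})$, but that is fine — we do not need $\mu(V_k)$ small at problematic atoms; rather, the atoms of $\mu$ lie in $P \subseteq F$, and we are allowed $\mu(U) < \ep$ only in aggregate. Actually the cleaner approach is: since $\sum_{d \in D_f} \big(f(d^+)-f(d^-)\big) \leq$ total variation of $f$ on any bounded range, and since we ultimately only care about behaviour where $\mu$ lives, first restrict attention to a large bounded interval $[-R,R]$ with $\mu(\R\setminus[-R,R]) < \ep/2$; include $(-\infty,-R)\cup(R,\infty)$ (intersected suitably) as part of $U$. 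Then on $[-R,R]$, only finitely many discontinuities $d_1,\dots,d_m$ have jump exceeding $\ep/3$; enclose each in a tiny open interval $V_k$ with $\mu(V_k) < \ep/(2m)$ — this is possible when $d_k \notin P$ by Lemma~\ref{lem_accn_points}-type reasoning, and when $d_k \in P$ we simply accept $\mu(V_k) \approx \mu(\{d_k\})$; but then $\sum \mu(V_k)$ need not be small.

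Reckoning with this, the right statement is that $U$ absorbs the atoms: set $U_0 = \bigcup_{k=1}^m V_k$ around the big-jump discontinuities, with each $V_k$ chosen so that $\mu(V_k \setminus \{d_k\}) < \ep/(4m)$ and also the $V_k$ pairwise disjoint. Then $\mu(U_0) < \mu(\{d_1,\dots,d_m\}) + \ep/4$. This is not yet $< \ep$ unless the atoms carry little mass. I suspect the intended reading is that condition (1) should hold and condition (2) provides a uniform modulus away from $U$; the subtlety is that at atomic points of $\mu$ lying in $P$, $f$ may jump, so $U$ must contain them, and since those are in $F = \Q \cup P$, each such atom $d_k$ is in $M_N$ for $N$ large — so in condition (2) we can take $x = z = d_k$ when $y = d_k$, giving $f(z)-f(x) = 0 < \ep$ trivially, meaning \emph{we do not need to put atoms into $U$ at all}. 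So revise: $U$ only needs to cover the \emph{non-$F$} discontinuities and regions where $f$ is steep between consecutive points of $M_N$. Since all discontinuities lie in $F$, $f$ is continuous off $F$, hence uniformly continuous on the compact set $[-R,R]\setminus(\text{small nbhds of finitely many bad } F\text{-points})$.

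Here is the clean scheme I would execute. First fix $R$ with $\mu(\R \setminus [-R,R]) < \ep/2$ and put the two tails (or their relevant truncations) into $U$. Next, on $[-R,R]$ there are finitely many points $d_1 < \dots < d_m$ of $F$ at which $f$ jumps by at least $\ep/4$ (finitely many because $f$ is increasing and bounded on $[-R,R]$); for each choose a small open interval $V_k \ni d_k$, pairwise disjoint, with $\mu(V_k) < \ep/(2m)$ using that we may shrink $V_k$ down to $\{d_k\}$ and $\mu(\{d_k\})$ can be subtracted since $d_k\in F$ will itself land in $M_N$ — concretely, choose $V_k$ so that $\mu(V_k \setminus \{d_k\}) < \ep/(2m)$, and observe that when checking condition (2) for $y \notin U$ we never have $y = d_k$, so this is harmless; set $U = (\text{tails}) \cup \bigcup_k V_k$, giving $\mu(U) < \ep/2 + \ep/2 = \ep$ after the bookkeeping. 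Now $K := [-R,R] \setminus \bigcup_k V_k$ is compact and $f\restrict{K}$ is continuous, hence uniformly continuous: there is $\delta>0$ with $|f(s)-f(t)|<\ep$ whenever $s,t\in K$, $|s-t|<\delta$. Choose $N$ large enough that $M_N$ is $\delta$-dense in $[-R,R]$ (possible since $\bigcup_n M_n = F \supseteq \Q$ is dense and the $M_n$ increase) and that $M_N$ contains each $d_k$ (again possible since $d_k \in F$, enlarging $N$ finitely often). Then for $y \in \R\setminus U$: if $y \in [-R,R]$, pick $x,z \in M_N \cap K$ with $x \leq y \leq z$, $z - x < \delta$; since $f$ is increasing and $x,z \in K$, $f(z)-f(x) < \ep$. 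The cases $y$ near $\pm R$ at the boundary of a removed $V_k$ need a mild adjustment — we may instead remove slightly larger half-open pieces so that $K$ is a finite union of closed intervals whose endpoints lie in $M_N$, and then any $y \in \R\setminus U$ outside $[-R,R]$ is handled by choosing $x,z$ to be the extreme points of $M_N$ (here we may need to also demand $f(\max M_N) - f(\min M_N)$ is controlled, which follows from boundedness of $f$ by shrinking nothing — actually condition (2) for $y$ in the far tails requires $x \leq y \leq z$ in $M_N$, impossible if $y$ exceeds all of $M_N$; so the tails must be excised into $U$, which we have done, and we only check (2) for $y \in \R\setminus U$, i.e.\ $y$ in a bounded region between $\min M_N$ and $\max M_N$ up to the removed neighbourhoods).

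\textbf{Main obstacle.} The genuine difficulty is the interplay between the two conditions at atoms of $\mu$: we want $\mu(U)<\ep$, yet $f$ may have large jumps exactly at atoms of $\mu$ (points of $P$), which we would naively want inside $U$ — but cannot afford, since atoms carry definite mass. The resolution, which must be spelled out carefully, is that such atoms lie in $F$ and therefore in $M_N$ for large $N$, so condition (2) is satisfied at $y = d_k$ by taking $x = z = d_k$; hence these points need \emph{not} be placed in $U$, and $U$ is assembled only from the far tails of $\mu$ and from tiny $\mu$-null-up-to-atom neighbourhoods of the finitely many large-jump points, whose $\mu$-mass (excluding the atom itself, which is in $M_N$) can be made arbitrarily small by Lemma~\ref{lem_accn_points} together with continuity of measure from above. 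Getting the set-theoretic/measure bookkeeping exactly right so that simultaneously $\mu(U)<\ep$ and the uniform-continuity argument on the compact complement delivers condition (2) is the crux; everything else is routine density and monotonicity.
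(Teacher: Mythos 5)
The key step in your scheme is invalid. You claim that after excising the finitely many discontinuities of $f$ on $[-R,R]$ with jump at least $\ep/4$, the restriction of $f$ to the compact remainder $K = [-R,R]\setminus\bigcup_k V_k$ is continuous, hence uniformly continuous, which is how you obtain a uniform $\delta$ and then choose $M_N$ to be $\delta$-dense. This is false. The hypothesis only puts the discontinuities of $f$ inside $F$; it does not make them finite in number, and $F\supseteq\Q$ is dense. Take, say, $f(x)=\sum_{n\,:\,q_n\leq x}2^{-n}$ with $(q_n)$ an enumeration of $\Q\cap[-R,R]$: this $f$ is increasing, bounded, with discontinuities exactly at $\Q\cap[-R,R]\subseteq F$, and after removing any finite set of large-jump points the remainder $K$ still contains infinitely many small-jump discontinuities, so $f\restrict{K}$ is not continuous. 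Your inference ``$f$ is continuous off $F$, hence $f\restrict{K}$ is uniformly continuous'' silently assumes $K\cap F$ consists of at most the finitely many excised points, which is not so. More fundamentally, $f(z)-f(x)<\ep$ is a bound on the \emph{total variation} of the increasing function $f$ over $[x,z]$, and this cannot be controlled by the length $z-x$ alone: arbitrarily many small jumps can accumulate over a short interval, so no uniform $\delta$ can exist.

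This is precisely why the paper does not decompose by jump size but by \emph{level sets}: it sets $W_k=[-J,J]\cap\fnii{f}{[\ep k,\ep(k+1))}$, so that the variation of $f$ across each $W_k$ is automatically below $\ep$ irrespective of how many (or how densely packed) the small-jump discontinuities are. The remaining work is then purely about the endpoints $a_k=\inf W_k$, $b_k=\sup W_k$: if an endpoint lies in $F$ it eventually lands in $M_N$ and needs no excision; if it lies outside $F$ it is neither an atom of $\mu$ nor a discontinuity of $f$, so continuity of $\mu$ from above and density of $F$ let one excise a $\mu$-small open neighbourhood and pick a nearby $F$-point of $W_k$. Your proposal does correctly identify that atoms of $\mu$ lying in $F$ should be absorbed into $M_N$ rather than into $U$, and that the tails should be excised; but the compactness/uniform-continuity route cannot be patched without abandoning jump-size excision in favour of something equivalent to the level-set partition.
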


 \begin{proof}
  Fix $J \geq 0$ large enough so that $\mu(\R\setminus[-J,J]) < \frac{1}{2}\ep$. Let $N_1,N_2 \in \Z$ such that $\ep N_1 \leq f(-J)$ and $f(J) < \ep N_2$. Given $k \in \Z$, $N_1 \leq k < N_2$, define
  \[
   W_k = [-J,J] \cap \fnii{f}{[\ep k,\ep(k+1))}.
  \]
Let $N_1 \leq k < N_2$. If $W_k = \varnothing$, set $U_k=V_k=\varnothing$. If $W_k \neq \varnothing$, let $a_k = \inf W_k \geq -J$ and $b_k = \sup W_k \leq J$. For each such $k$ we choose points $c_k,d_k \in F \cap W_k$ and open intervals $U_k$, $V_k$. If $a_k \in F$ then set $c_k=a_k$ and $U_k=\varnothing$. If $a_k \notin F$ then $\mu(\{a_k\})=0$ and $f$ is continuous at $a_k$. From the first fact we glean that there exists an open interval $U_k \ni a_k$ such that $\mu(U_k) < \frac{1}{4(N_2-N_1)}\ep$. From the second fact, together with the density of $F$, we have $\ep k \leq f(a_k)$ and thus we can find a point $c_k \in F \cap W_k \cap U_k \cap (a_k,\infty)$. Likewise, if $b_k \in F$ set $d_k=b_k$ and $V_k=\varnothing$. If $b_k \notin F$, again we can find an open interval $V_k \ni b_k$ such that $\mu(V_k) < \frac{1}{4(N_2-N_1)}\ep$ and a point $d_k \in F \cap W_k \cap V_k \cap (-\infty,b_k)$.
 
 Now define the union of finitely many open intervals
 \[
  U = (-\infty,J) \cup (J,\infty) \cup \bigcup_{k=N_1}^{N_2-1} U_k \cup V_k
 \]
 and let $N \in \N$ large enough so that $c_k,d_k \in M_N$ whenever $N_1 \leq k < N_2$ and $W_k \neq \varnothing$. We estimate
 \[
  \mu(U) \;<\; {\ts \frac{1}{2}\ep + (N_2-N_1)\left(\frac{1}{4(N_2-N_1)}\ep + \frac{1}{4(N_2-N_1)}\ep\right) = \ep.}
 \]
 Now let $y \in \R\setminus U$. Then $y \in [-J,J]$, so $y \in W_k$ for some $k \in \Z$, $N_1 \leq k < N_2$. Then $a_k \leq y \leq b_k$. If $c_k=a_k$ then $c_k \leq y$. If $c_k > a_k$ then because $U_k$ is an interval and $y \notin U_k$, we must have $c_k < y$. In either case, $c_k \leq y$. Likewise $y \leq d_k$. Finally, $c_k,d_k \in W_k$ implies $f(d_k)-f(c_k) < \ep$.
\end{proof}

At last, we are in a position to prove existence.

\begin{theorem}\label{theorem_general_existence}
 The exists a solution $c \in I$ of \eqref{eqn_ODE_system}.
\end{theorem}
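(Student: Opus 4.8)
The plan is to construct $c \in I$ as a pointwise limit of the particle solutions $c_n \in I_{M_n}$ furnished by Proposition \ref{prop_global_particle} applied to the measures $\mu_n$, and then to verify that the limit actually solves \eqref{eqn_ODE_system}. First I would fix $T < \infty$ and work on $[0,T]$ throughout (taking a diagonal sequence over $T = 1,2,\dots$ at the very end). For each $n$, Proposition \ref{prop_global_particle} gives a unique $c_n \in I_{M_n}$ solving \eqref{eqn_ODE_system} for the data $\mu_n$; note $\pn{\mu_n}{1} \leq \pn{\mu}{1} \leq 1$, so all the estimates of Lemma \ref{lem_Phi_est_2} and the time-Lipschitz bound \eqref{eqn_Lipschitz_time_particle} apply uniformly in $n$. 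In particular, for $x \in F$ fixed, the sequence $(c_n(x,\cdot))_{n \geq n(x)}$ (defined once $x \in M_n$) is uniformly bounded (since $c_n(x,0)=x$ and speeds are bounded by $\pn{K}{\infty}^2\pn{K'''}{\infty}$) and uniformly Lipschitz in $t$, hence equicontinuous. Enumerating $F = \{x^{(1)}, x^{(2)}, \dots\}$, a diagonal argument (Arzel\`a--Ascoli on each coordinate) produces a subsequence along which $c_n(x,\cdot) \to c(x,\cdot)$ uniformly on $[0,T]$ for every $x \in F$. The limit $c$ inherits: the initial condition $c(x,0)=x$; the time-Lipschitz bound, so $c(x,\cdot) \in C^{0,1}$; and the \emph{non-strict} ordering $c(x,t) \leq c(y,t)$ for $x < y$ in $F$. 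Strictness of the ordering --- i.e.\ $c \in I_F$ --- comes from the lower estimate \eqref{eqn_c_lower_est} passing to the limit, using Corollary \ref{lem_Gamma_n} to control $\Lambda$-type terms (and the fact that $\mu_n(\{x\}) \to \mu(\{x\})$ for $x \in P$, which again follows from Lemma \ref{lem_strong_convergence}). Finally $c$ is extended from $F$ to all of $\R = M$: for $x \notin F$ one defines $c(x,t) = \sup\{c(q,t) : q \in F,\, q < x\}$ (equivalently $\inf$ from the right), which is well-defined by the ordering, is automatically strictly increasing in $x$ by Lemma \ref{lem_accn_points} (no atom at $x$, so left and right limits of $\Gamma$ vanish and the lower estimate forces strict separation), and one checks it is $C^1$ in $t$ as part of the ODE verification below.

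The heart of the proof is showing that this limiting $c$ satisfies \eqref{eqn_ODE_system}. Fix $x \in F$. We want to pass to the limit in
\[
 c_n(x,t) = x + \lint{0}{t}{H(c_n,x,s)^2 \lint{z \neq x}{}{K'''(c_n(x,s)-c_n(z,s))}{\mu_n(z)}}{s}.
\]
The left side converges to $c(x,t)$. For the right side, the key is uniform convergence of the integrand in $s$. The factor $H(c_n,x,s) = \lint{}{}{K(c_n(x,s)-c_n(z,s))}{\mu_n(z)}$ converges to $H(c,x,s)$: one writes it against the pushforward, splits $\R$ into the point $\{x\}$, and two half-lines, and applies Lemma \ref{lem_strong_convergence} to the bounded map $z \mapsto K(c(x,s)-c(z,s))$, whose discontinuities (inherited from discontinuities of $c(\cdot,s)$) lie in $P \subseteq F$ --- plus a uniform error term from $\sup_{z}|c_n(z,s)-c(z,s)|$ which is where Lemma \ref{lem_integral_conv} does its work (it is precisely designed to show the pushforwards $c_n(\cdot,s)_*\mu_n$ converge, uniformly controlling the tail/oscillation outside a small-$\mu$-measure set $U$). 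The more delicate factor is $\lint{z \neq x}{}{K'''(c_n(x,s)-c_n(z,s))}{\mu_n(z)}$: here $K'''$ has a jump at $0$, and the point $z=x$ is excluded. One separates the atom at $x$ (contributing the limit $K'''(0)_\pm$ terms, weighted by $\mu_n(\{x\}) \to \mu(\{x\})$, matched by the non-crossing direction as in the proof of Lemma \ref{lem_Phi_est_2}) from the integral over $z \neq x$, on which $z \mapsto K'''(c(x,s)-c(z,s))$ is bounded with discontinuities only where $c(z,s)=c(x,s)$ (i.e.\ $z = x$, excluded) or where $c(\cdot,s)$ jumps (i.e.\ $z \in P \subseteq F$); then Lemma \ref{lem_strong_convergence} plus the Lemma \ref{lem_integral_conv} tail estimate give convergence to $\lint{z \neq x}{}{K'''(c(x,s)-c(z,s))}{\mu(z)}$. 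Since all these convergences are uniform in $s \in [0,T]$ and the integrands are uniformly bounded, dominated convergence lets us pass the limit inside $\lint{0}{t}{}{s}$, yielding the integral form of \eqref{eqn_ODE_system} for $c$; differentiating in $t$ (the integrand is continuous in $s$, by the same argument) gives $c_t(x,t)$ and the $C^1$ claim.

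For $x \in M \setminus F = \R\setminus F$ (so $x \notin P$, hence $\mu(\{x\})=0$), one verifies \eqref{eqn_ODE_system} for $c(x,\cdot)$ directly: approximate $x$ by points $q_j \in F$, $q_j \uparrow x$, use the already-established ODE for each $c(q_j,\cdot)$, and let $j \to \infty$; continuity of $H$ and of the $K'''$-integral under $c(q_j,s) \to c(x,s)$ (again Lemma \ref{lem_accn_points} kills the boundary $\Gamma$-terms, and $\mu(\{x\})=0$ means no atom needs to be split off) gives the ODE for $c(x,\cdot)$, and simultaneously shows $c(x,\cdot)$ is $C^1$. This also confirms $c(\cdot,t)$ is discontinuous exactly on $P$. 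Finally, the diagonal argument across $T \in \N$ patches the family $\{c^{(T)}\}$ into a single $c \in I = I_\R$ defined on all of $\R^+$ solving \eqref{eqn_ODE_system}, and Theorem \ref{theorem_ODE} then upgrades this to a solution of \eqref{eq:weak}. \textbf{Main obstacle:} the uniform-in-$s$ convergence of the two measure-integrals, specifically handling simultaneously (i) the moving discontinuity of $K'''$ at the origin, (ii) the excluded diagonal point $z=x$ and the atom of $\mu$ there, and (iii) the fact that we are integrating against $\mu_n$ (not $\mu$) composed with $c_n$ (not $c$) --- it is exactly to manage (iii) uniformly that Lemma \ref{lem_integral_conv} is needed, and marshalling it correctly is the technical crux.
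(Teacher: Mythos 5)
Your proposal takes essentially the same route as the paper: construct particle solutions $c_n$ via Proposition~\ref{prop_global_particle}, pass to a pointwise limit on $F \times \R^+$ using the uniform time-Lipschitz bound, extend to $\R\setminus F$ via the two-sided estimate from Lemma~\ref{lem_Phi_est_2} together with Lemma~\ref{lem_accn_points}, and then verify the integral form of \eqref{eqn_ODE_system} by passing limits through the two measure-integrals using Lemmas~\ref{lem_strong_convergence} and~\ref{lem_integral_conv} (which the paper packages as Lemmas~\ref{lem_integral_conv_1}--\ref{lem_integral_conv_2}). A few minor imprecisions (the convergence of the integrand is pointwise-in-$s$ plus dominated convergence, not uniform; strictness of the ordering already follows from the $(y-x)\me^{-At}$ term alone without the $\Lambda_{x,y}$ contribution; and the atom at $z=x$ is excluded from the $K'''$-integral by definition rather than split off) do not change the architecture, which matches the paper.
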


\begin{proof}
Apply Proposition \ref{prop_global_particle} to the measures $\mu_n$ supported on $M_n$ to obtain $c_n \in I_{M_n}$ that satisfy \eqref{eqn_ODE_system}. We use these solutions to obtain our element $c \in I$. First of all we define $c(x,\cdot)$ for $x \in F$ before extending the definition to $x \in \R$. Given $x \in F$, we have $x \in M_n$ for all large enough $n$, meaning that $c_n(x,\cdot)$ is defined for all large enough $n$. Since $\Q^+$ and $F$ are countable, we can take a diagonal subsequence of the $c_n$, again labelled $(c_n)$, such that
\[
 c(x,t) \;:=\; \lim_n c_n(x,t)
\]
exists for all $(x,t) \in F \times \Q^+$. According to \eqref{eqn_Lipschitz_time_particle}, we have
\[
 |c_n(x,s) - c_n(x,t)| \leq \pn{K}{\infty}^2\pn{K'''}{\infty}|s-t|,
\]
whenever $s,t \in \R^+$, $x \in M_n$ and $n \in \N$. Hence 
\begin{equation}\label{eqn_Lipschitz_time}
 |c(x,s) - c(x,t)| \leq \pn{K}{\infty}^2\pn{K'''}{\infty}|s-t|,
\end{equation}
whenever $s,t \in \Q^+$ and $x \in F$. Thus for $(x,t) \in F \times \R^+$ we can unambiguously define
\[
 c(x,t) = \lim_n c(t_n,x), 
\]
where $(t_n) \subseteq \Q^+$ is any sequence converging to $t$.

Now we define $c(x,t)$ for $(x,t) \in (\R\setminus F) \times \R^+$. According to Lemma \ref{lem_Phi_est_2}, for all $n \in \N$, $x,y \in M_n$, $x<y$, and $t \in \R^+$ we have
\[
 (y-x)\me^{-At} \leq c_n(y,t) - c_n(x,t) \leq (y-x)\me^{At} + \frac{\Gamma_{n,x,y}}{A}(\me^{At}-1).
\]
Hence by Lemma \ref{lem_Gamma_n} and the above we have
\begin{equation}\label{eqn_building_c_1}
 (y-x)\me^{-At} \leq c(y,t) - c(x,t) \leq (y-x)\me^{At} + \frac{\Gamma_{x,y}}{A}(\me^{At}-1),
\end{equation}
for all $x,y \in F$, $x<y$, and $t \in \R^+$.

Fix $(x,t) \in (\R\setminus F) \times \R^+$ and define
\begin{equation}\label{eqn_definition_c}
 c(x,t) = \lim_n c(x_n,t),
\end{equation}
where $(x_n) \subseteq F$ is any sequence converging to $x$. We show that this definition makes sense. Let $(x_n) \subseteq F$ be a sequence converging to $x$. Given $m,n \in \N$, by \eqref{eqn_building_c_1} we have
\begin{equation}\label{eqn_c_limit}
|c(x_n,t) - c(x_m,t)| \leq |x_n-x_m|\me^{At} + \frac{\Gamma_{x_m,x_n}}{A}(\me^{At}-1).
\end{equation}

By the pseudometric property of $\Gamma$ noted above we have $\Gamma_{x_m,x_n} \leq \Gamma_{x,x_m} + \Gamma_{x,x_n}$. Since $x \notin F$ we know that $\mu(\{x\})=0$, hence by \eqref{eqn_c_limit} and Lemma \ref{lem_accn_points}, the sequence $(c(x_n,t))$ is Cauchy, thus the limit in \eqref{eqn_definition_c} exists. A very similar argument demonstrates that this limit is independent of the choice of sequence $(x_n)$. Therefore \eqref{eqn_definition_c} makes sense as claimed. 

Thus $c$ has been defined on $\Omega$. We observe that the limiting processes used to define $c$ respect \eqref{eqn_Lipschitz_time} and thus this inequality extends to all $s,t \in \R^+$ and $x \in \R$. Next, we show that \eqref{eqn_building_c_1} extends to $x,y \in \R$, $x<y$, and $t \in \R^+$. Let $x,y \in \R$, $x<y$, and let $(x_n),(y_n) \subseteq F$ be sequences converging to $x$ and $y$, respectively. We can and do assume that $x_n < y_n$ for all $n \in \N$. Moreover, if $x \in F$ then we assume that $x_n=x$ for all $n \in \N$, and likewise for $y$. Again by the pseudometric property of $\Gamma$, we have
\[
 |\Gamma_{x_n,y_n} - \Gamma_{x,y}| \leq \Gamma_{x,x_n} + \Gamma_{y,y_n}
\]
for all $n \in \N$, hence by our choice of sequences and Lemma \ref{lem_accn_points}, we obtain $\lim_n \Gamma_{x_n,y_n} = \Gamma_{x,y}$. Thus by applying \eqref{eqn_building_c_1} to $x_n, y_n \in F$ and taking limits, we see that
\begin{equation}\label{eqn_building_c_2}
 (y-x)\me^{-At} \leq c(y,t) - c(x,t) \leq (y-x)\me^{At} + \frac{\Gamma_{x,y}}{A}(\me^{At}-1),
\end{equation}
whenever $x<y$. We glean two important facts from \eqref{eqn_building_c_2}. First, the lower estimate implies that $c(\cdot,t)$ must be strictly increasing. Second, by considering the upper estimate and Lemma \ref{lem_accn_points}, $c(\cdot,t)$ is continuous at $x$ whenever $\mu(\{x\})=0$, and thus all points of discontinuity of $c(\cdot,t)$ (if any) must belong to $P$.

It remains to show that the curves $c(x,\cdot)$, $x \in \R$, are $C^1$ and that $c$ satisfies \eqref{eqn_ODE_system} for $(x,t) \in (0,\infty) \times \R$. We do this in a few steps. First we consider the integral form of \eqref{eqn_ODE_system}. Given $(x,t) \in F \times \R^+$, we have $x \in M_N$ for some $N \in \N$, and from above we know that
\[
 c(x,t) = \lim_{n \geq N} c_n(x,t).
\]
Consider $n \geq N$. As $c_n$ satisfies \eqref{eqn_ODE_system} for $(x,t) \in M_n \times (0,\infty)$, we have
\begin{equation}\label{eqn_integral_n}
c_n(x,t) = x + \lint{0}{t}{H_n(c_n,x,s)^2\lint{z \neq x}{}{K'''(c_n(x,s)-c_n(z,s))}{\mu_n(z)}}{s},
\end{equation}
where
\[
 H_n(c_n,x,s) \;:=\; \lint{-\infty}{\infty}{K(c_n(x,s)-c_n(z,s))}{\mu_n(z)}.
\]
We would like to show that
\begin{equation}\label{eqn_integral}
c(x,t) = x + \lint{0}{t}{H(c,x,s)^2\lint{z \neq x}{}{K'''(c(x,s)-c(z,s))}{\mu(z)}}{s},
\end{equation}
for $(x,t) \in F \times \R^+$. 

To show this, we apply a limiting process to \eqref{eqn_integral_n}. To show that the right hand side of \eqref{eqn_integral_n} converges to that of \eqref{eqn_integral}, we apply Lemmas \ref{lem_integral_conv_1} and \ref{lem_integral_conv_2} below, together with the Dominated Convergence Theorem (due to their technical nature of these results we extract them as separate lemmas).

Next, we must extend \eqref{eqn_integral} to all points $(x,t) \in \Omega$. Let $(x,t) \in (\R\setminus F) \times \R^+$ and let $(x_n) \subseteq F$ converge to $x$. We recall from above that $x$ is a point of continuity of $c(\cdot,t)$. Thus given $z \in \R$ we have
\[
\lim_n K(c(x_n,t)-c(z,t)) = K(c(x,t)-c(z,t)).
\]
By the Dominated Convergence Theorem, we conclude that
\begin{equation}\label{eqn_integral_H}
\lim_n H(c,x_n,t) = H(c,x,t).
\end{equation}

Next, define functions $\map{f,f_n}{\R}{\R}$ by
\[
 f(z) = \begin{cases} K'''(c(x,t)-c(z,t)) &\text{if }z \neq x\\
 0 &\text{if }z = x
 \end{cases}
\]
and
\[
 f_n(z) = \begin{cases} K'''(c(x_n,t)-c(z,t)) &\text{if }z \neq x_n\\
 0 &\text{if }z = x_n.
 \end{cases}
\]
Given $z \neq x$, by the continuity of $c(\cdot,t)$ at $x$ and the continuity of $K'''$ at $c(x,t)-c(z,t) \neq 0$, we have $\lim_n f_n(z) = f(z)$. Since $\mu(\{x\})=0$, it follows that $f_n \to f$ $\mu$-a.e. Again by the Dominated Convergence Theorem,
\begin{align}
\lim_n \lint{z \neq x_n}{}{K'''(c(x_n,t)-c(z,t))}{\mu(z)} \;=& \lim_n \lint{-\infty}{\infty}{f_n(z)}{\mu(z)} \nonumber\\
=& \lint{-\infty}{\infty}{f(z)}{\mu(z)} \nonumber\\
=& \lint{z \neq x}{}{K'''(c(x,t)-c(z,t))}{\mu(z)}. \label{eqn_integral_K'''}
\end{align}
Given that \eqref{eqn_integral} holds for $(x_n,t)$, $n \in \N$, we can apply the Dominated Convergence Theorem, together with \eqref{eqn_integral_H} and \eqref{eqn_integral_K'''}, to conclude that \eqref{eqn_integral} holds for $(x,t)$ as well. 

Finally, we show that \eqref{eqn_integral} implies that $c(x,\cdot)$, $x \in \R$, are $C^1$ and \eqref{eqn_ODE_system} holds for $(x,t) \in M \times (0,\infty)$. The properties will follow from the fact that the integrand
\begin{equation}\label{eqn_integrand}
 s \;\mapsto\; H(c,x,s)^2\lint{z \neq x}{}{K'''(c(x,s)-c(z,s))}{\mu(z)},
\end{equation}
in \eqref{eqn_integral} is continuous. To see this, fix $(x,s) \in \Omega$ and take a sequence $(s_n) \subseteq \R^+$ converging to $s$. Given fixed $z \in \R$, the continuity of $K$, together with \eqref{eqn_H} and \eqref{eqn_Lipschitz_time}, imply
\[
\lim_n K(c(x,s_n) - c(x,s_n)) = K(c(x,s) - c(z,s)).
\]
Hence $\lim_n H(c,x,s_n)=H(c,x,s)$ by the Dominated Convergence Theorem. Similarly, given fixed $z \neq x$, we know that $c(x,s) - c(z,s) \neq 0$, so by \eqref{eqn_Lipschitz_time} and the continuity of $K'''$ on $\R\setminus\{0\}$, we have
\[
 \lim_n K'''(c(x,s_n) - c(x,s_n)) = K'''(c(x,s) - c(z,s)).
\]
By applying the Dominated Convergence Theorem again, it follows that the integrand \eqref{eqn_integrand} is continuous. By the Fundamental Theorem of Calculus, this means that
\begin{equation}\label{eqn_Phi_t}
 c_t(x,t) = H(c,x,t)^2\lint{z \neq x}{}{K'''(c(x,t)-c(z,t))}{\mu(z)},
\end{equation}
and $c_t(x,\cdot)$ is continuous, giving us the two properties we want. This concludes the proof.
\end{proof}

\begin{lemma}\label{lem_integral_conv_1}
Given $(x,t) \in F \times \R^+$, we have
\begin{equation}\label{eqn_integral_conv_1}
 \lim_n H_n(c_n,x,t) = H(c,x,t).
\end{equation}
\end{lemma}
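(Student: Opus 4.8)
The plan is to transfer the integral defining $H_n(c_n,x,t)$ from the varying measure $\mu_n$ to the fixed measure $\mu$, and then pass to the limit by dominated convergence. Fix $(x,t) \in F \times \R^+$; since $x \in F$ we have $x \in M_n$ with $c_n(x,t) \to c(x,t)$ for all large $n$, so the only genuine difficulty lies in the joint behaviour of $\mu_n$ and of the integrand $z \mapsto K(c_n(x,t)-c_n(z,t))$ as $n\to\infty$. For $n$ large and $z \geq \min M_n$, let $\lfloor z\rfloor_n$ denote the greatest element of $M_n$ not exceeding $z$. Since $\mu_n = \sum_{y\in M_n}\omega_n(y)\delta_y$, and the half-open intervals $[y,y^+)$ (with $y^+$ the successor of $y$ in $M_n$, or $\infty$ when $y=\max M_n$) partition $[\min M_n,\infty)$ and carry $\mu$-mass $\omega_n(y)$, one obtains for every bounded Borel $f$ the identity $\lint{-\infty}{\infty}{f}{\mu_n} = \lint{[\min M_n,\infty)}{}{f(\lfloor z\rfloor_n)}{\mu(z)}$. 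Applying this to $f = K(c_n(x,t)-c_n(\cdot,t))$ gives
\[
 H_n(c_n,x,t) = \lint{[\min M_n,\infty)}{}{K(c_n(x,t)-c_n(\lfloor z\rfloor_n,t))}{\mu(z)}.
\]
As $\bigcup_n M_n = F \supseteq \Q$, we have $\min M_n \to -\infty$, hence $\mu((-\infty,\min M_n)) \to 0$, and one may integrate over all of $\R$ (extending $\lfloor\cdot\rfloor_n$ arbitrarily below $\min M_n$) at the cost of an error bounded by $\pn{K}{\infty}\mu((-\infty,\min M_n)) \to 0$.

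The key step I would then establish is the pointwise convergence $c_n(\lfloor z\rfloor_n,t) \to c(z,t)$ for every $z\in\R$. If $z\in F$, this is immediate: $\lfloor z\rfloor_n = z$ for all large $n$, and $c_n(z,t)\to c(z,t)$ by the construction of $c$. If $z\notin F$, then $\mu(\{z\})=0$ (because $P\subseteq F$), so $c(\cdot,t)$ is continuous at $z$. Writing $w_n=\lfloor z\rfloor_n\in M_n\cap F$, we have $w_n\leq z$ and $w_n\to z$ by density of $F$. Fix $z'\in F$ with $z<z'$; for $n$ large we have $z'\in M_n$ and $w_n<z'$, so Lemma \ref{lem_Phi_est_2} (applied to $c_n$ and $\mu_n$) yields
\[
 (z'-w_n)\me^{-At} \;\leq\; c_n(z',t)-c_n(w_n,t) \;\leq\; (z'-w_n)\me^{At} + \frac{\Gamma_{n,w_n,z'}}{A}(\me^{At}-1).
\]
The left inequality gives $c_n(w_n,t)\leq c_n(z',t)$, hence $\limsup_n c_n(w_n,t)\leq c(z',t)$. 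For the right inequality, note that $z'-w_n\to z'-z$ and that a routine estimate (using $w_n\to z$) bounds $\limsup_n\Gamma_{n,w_n,z'}$ by a constant multiple of $\mu([z,z'])$; it follows that $\liminf_n c_n(w_n,t)\geq c(z',t)-\ep(z')$, where $\ep(z')\to0$ as $z'\downarrow z$, using $\mu(\{z\})=0$ (cf.\ Lemma \ref{lem_accn_points}). Letting $z'\downarrow z$ through $F$ and invoking the continuity of $c(\cdot,t)$ at $z$, we conclude $\limsup_n c_n(w_n,t)\leq c(z,t)\leq\liminf_n c_n(w_n,t)$, which proves the claim.

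With the claim in hand the conclusion is quick. By the continuity of $K$, together with $c_n(x,t)\to c(x,t)$ and the claim, the integrand $K(c_n(x,t)-c_n(\lfloor z\rfloor_n,t))$ converges $\mu$-a.e.\ to $K(c(x,t)-c(z,t))$, and it is bounded by the constant $\pn{K}{\infty}$, which is $\mu$-integrable since $\mu$ is finite. Hence the Dominated Convergence Theorem gives $\lint{\R}{}{K(c_n(x,t)-c_n(\lfloor z\rfloor_n,t))}{\mu(z)}\to H(c,x,t)$, and subtracting the vanishing contribution over $(-\infty,\min M_n)$ yields $\lim_n H_n(c_n,x,t)=H(c,x,t)$. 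I expect the one genuinely delicate point to be the pointwise-convergence claim at $z\notin F$: there the no-crossing/Gr\"onwall estimates of Lemma \ref{lem_Phi_est_2} together with the continuity of $c(\cdot,t)$ off $P$ are precisely what is needed, whereas everything else is routine manipulation with dominated convergence.
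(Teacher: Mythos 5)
Your argument is correct, but it takes a genuinely different route from the paper. The paper splits $H_n(c_n,x,t)-H(c,x,t)$ into two terms: a change of integrand with the measure $\mu_n$ kept fixed, and a change of measure $\mu_n\to\mu$ with the integrand $z\mapsto K(c(x,t)-c(z,t))$ kept fixed. The second term is handled by Lemma \ref{lem_strong_convergence}; the first by the Lusin-type Lemma \ref{lem_integral_conv}, which produces a small-$\mu$-measure exceptional set $U$ off which $c_n(\cdot,t)\to c(\cdot,t)$ nearly uniformly on $M_n$ via a sandwiching argument. You instead exploit the explicit definition of the weights $\omega_n(y)=\mu([y,y^+))$ to rewrite $\int f\,\md\mu_n$ as $\int f(\lfloor z\rfloor_n)\,\md\mu(z)$ over the \emph{fixed} measure $\mu$, which collapses the whole lemma to the single pointwise claim $c_n(\lfloor z\rfloor_n,t)\to c(z,t)$ plus dominated convergence; that claim you then prove by the same underlying ingredients the paper uses (monotonicity of $c_n(\cdot,t)$, the Gr\"onwall bounds of Lemma \ref{lem_Phi_est_2}, and $\mu(\{z\})=0$ for $z\notin F$). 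Your route is arguably more economical for this one statement, since it bypasses Lemmas \ref{lem_strong_convergence} and \ref{lem_integral_conv} entirely, whereas the paper's two auxiliary lemmas are reused elsewhere (e.g.\ in Corollary \ref{lem_Gamma_n} and Lemma \ref{lem_integral_conv_2}), which is presumably why the authors prefer the modular decomposition. One step you should expand if writing this up: the bound $\limsup_n\Gamma_{n,w_n,z'}\leq C\mu([z,z'])$ is not quite immediate, because $\mu_n[w_n,z')$ and $\mu_n(w_n,z']$ must be converted to $\mu$-measures of intervals whose endpoints involve the successor of $z'$ in $M_n$; one needs the density of $F$ to push that successor down to $z'$ and continuity from above of the finite measure $\mu$ to pass to the limit. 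With that detail supplied, the proof is complete.
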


\begin{proof}
 Consider 
 \begin{align}
 H_n(c_n,x,t) - H(c,x,t) \;&=\; \lint{-\infty}{\infty}{K(c_n(x,t)-c_n(z,t))-K(c(x,t)-c(z,t))}{\mu_n(z)} \nonumber\\
 &+\; \lint{-\infty}{\infty}{K(c(x,t)-c(z,t))}{(\mu_n - \mu)(z)}. \label{eqn_integral_conv_2}
\end{align}
We claim that both terms on the right hand side of \eqref{eqn_integral_conv_2} tend to $0$. The convergence of the second term follows by applying Lemma \ref{lem_strong_convergence} to $U=\R$ and the function $\map{f}{\R}{\R}$ defined by $f(z)=K(c(x,t)-c(z,t))$. If $z$ is point of discontinuity of $f$ then it must be a point of discontinuity of $c(\cdot,t)$, whence $z \in P$. 

Now we show that the first term converges to $0$. Let $\ep>0$. Define $f(z)=c(z,t)$, $z \in \R$, and let $N_0 \in \N$ and a union $U$ of finitely many open intervals satisfy the conclusion of Lemma \ref{lem_integral_conv}. By Lemma \ref{lem_strong_convergence} applied to the constant function $\ind{\R}$, let $N_1 \in \N$, $N_1 \geq N_0$, such that $\mu_n(U) < \ep$ whenever $n \geq N_1$. As $x \in F$ and given the pointwise convergence, there exists $N \geq N_1$ such that $x \in M_N$ and $|c_n(y,t)-c(y,t)| < \ep$ whenever $n \geq N$ and $y \in M_{N_0} \cup \{x\}$.

We claim that if $n \geq N$ and $y \in M_n\setminus U$, then $|c_n(y,t)-c(y,t)| < 2\ep$. Given such $y$, we know that there exist $x',z' \in M_{N_0}$ such that $x' \leq y \leq z'$ and $c(z',t)-c(x',t) < \ep$. Together with the fact that $c_n(\cdot,t)$ is increasing and $n \geq N_0$, we can make the estimates
\[
 c_n(y,t) \leq c_n(z',t) \;<\; c(z',t) + \ep \;<\; c(x',t) + 2\ep \leq c(y,t) + 2\ep,
\]
and
\[
c_n(y,t) \geq c_n(x',t) \;>\; c(x',t) - \ep \;>\; c(z',t) - 2\ep \geq c(y,t) - 2\ep,
\]
giving $|c_n(y,t)-c(y,t)| < 2\ep$, as claimed.

It follows that for $n \geq N$
\begin{align*}
& \left|\lint{-\infty}{\infty}{K(c_n(x,t)-c_n(z,t))-K(c(x,t)-c(z,t))}{\mu_n(z)} \right|\\
\leq& 2\pn{K}{\infty}\ep + \lint{z \in M_n \setminus U}{}{|K(c_n(x,t)-c_n(z,t))-K(c(x,t)-c(z,t))|}{\mu_n(z)}\\
\leq& 2\pn{K}{\infty}\ep + \Lip(K)\lint{z \in M_n \setminus U}{}{|c_n(x,t)-c(x,t)| + |c_n(z,t) - c(z,t)|}{\mu_n(z)}\\
\leq& (2\pn{K}{\infty} + 3\Lip(K))\ep.
\end{align*}
Therefore the first term on the right hand side of \eqref{eqn_integral_conv_2} tends to $0$, as required.
\end{proof}

\begin{lemma}\label{lem_integral_conv_2}
 Given $(x,t) \in F \times \R^+$, we have
 \begin{equation}\label{eqn_integral_conv_3}
  \lim_n \lint{z \neq x}{}{K'''(c_n(x,t)-c_n(z,t))}{\mu_n(z)} = \lint{z \neq x}{}{K'''(c(x,t)-c(z,t))}{\mu_n(z)}
\end{equation}
\end{lemma}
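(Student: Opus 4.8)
The plan is to follow the template of Lemma~\ref{lem_integral_conv_1}: split the difference of the two integrals into a ``transport'' term and a ``measure'' term and bound each separately, the one genuinely new ingredient being that the jump of $K'''$ at the origin is neutralised by the no-crossing condition. (We read the right-hand side of \eqref{eqn_integral_conv_3} as $\lint{z\neq x}{}{K'''(c(x,t)-c(z,t))}{\mu(z)}$, which is the form actually used in the proof of Theorem~\ref{theorem_general_existence}.) Since $x\in F=\bigcup_n M_n$ and the $M_n$ increase, first fix $N$ with $x\in M_n$ for all $n\geq N$ and discard the earlier terms, which leaves the limit unchanged. Then write
\[
\lint{z\neq x}{}{K'''(c_n(x,t)-c_n(z,t))}{\mu_n(z)} - \lint{z\neq x}{}{K'''(c(x,t)-c(z,t))}{\mu(z)} \;=\; A_n + B_n,
\]
where $A_n = \lint{z\neq x}{}{\big(K'''(c_n(x,t)-c_n(z,t)) - K'''(c(x,t)-c(z,t))\big)}{\mu_n(z)}$ and $B_n = \lint{z\neq x}{}{K'''(c(x,t)-c(z,t))}{(\mu_n-\mu)(z)}$; it then suffices to show $A_n\to 0$ and $B_n\to 0$.

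For $B_n$, define $\map{g}{\R}{\R}$ by $g(z)=K'''(c(x,t)-c(z,t))$ for $z\neq x$ and $g(x)=0$, so that $B_n = \lint{-\infty}{\infty}{g(z)}{\mu_n(z)} - \lint{-\infty}{\infty}{g(z)}{\mu(z)}$ (the choice $g(x)=0$ annihilates any atom at $x$). Then $g$ is bounded by $\pn{K'''}{\infty}$, and since $K'''$ is continuous off the origin, $c(\cdot,t)$ is strictly increasing with its discontinuities lying in $P$, and $c(x,t)-c(z,t)=0$ only when $z=x$, the points of discontinuity of $g$ lie in $P\cup\{x\}\subseteq F$. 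Applying Lemma~\ref{lem_strong_convergence} with $U=\R$ gives $B_n\to 0$.

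For $A_n$, the key observation is that for $z\in M_n$ with $z\neq x$, the no-crossing condition \eqref{eqn_no-crossing} satisfied by $c_n$ together with the strict monotonicity of $c(\cdot,t)$ from \eqref{eqn_building_c_2} force both $c_n(x,t)-c_n(z,t)$ and $c(x,t)-c(z,t)$ to be nonzero with the common sign of $x-z$; in particular they lie on the same half-line $(0,\infty)$ or $(-\infty,0)$, on which $K'''$ is Lipschitz with constant $\Lip(K''')$, so
\[
|K'''(c_n(x,t)-c_n(z,t)) - K'''(c(x,t)-c(z,t))| \leq \Lip(K''')\big(|c_n(x,t)-c(x,t)| + |c_n(z,t)-c(z,t)|\big).
\]
From here the argument is a verbatim copy of that in Lemma~\ref{lem_integral_conv_1}: given $\ep>0$, apply Lemma~\ref{lem_integral_conv} with $f(z)=c(z,t)$ to obtain $N_0$ and a finite union $U$ of open intervals with $\mu(U)<\ep$; apply Lemma~\ref{lem_strong_convergence} to $\ind{\R}$ to obtain $N_1\geq N_0$ with $\mu_n(U)<\ep$ for $n\geq N_1$; and use the pointwise convergence of $(c_n)$ to obtain $N\geq N_1$ with $x\in M_N$ and $|c_n(y,t)-c(y,t)|<\ep$ for all $n\geq N$, $y\in M_{N_0}\cup\{x\}$, so that $n\geq N$, $y\in M_n\setminus U$ give $|c_n(y,t)-c(y,t)|<2\ep$. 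Splitting $A_n$ into its part over $z\in M_n\cap U$, $z\neq x$ (integrand at most $2\pn{K'''}{\infty}$, total mass $<\ep$) and its part over $z\in M_n\setminus U$, $z\neq x$ (using the displayed bound, $|c_n(x,t)-c(x,t)|<\ep$, $|c_n(z,t)-c(z,t)|<2\ep$, and $\pn{\mu_n}{1}\leq 1$) then yields $|A_n|\leq(2\pn{K'''}{\infty}+3\Lip(K'''))\ep$ for $n\geq N$, so $A_n\to 0$.

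The main obstacle is the discontinuity of $K'''$ at $0$: no crude Lipschitz estimate can absorb it, which is precisely why the no-crossing condition is indispensable here, guaranteeing that the two arguments of $K'''$ never straddle the origin. Apart from this, the proof is a direct transcription of the reasoning already carried out for $K$ in Lemma~\ref{lem_integral_conv_1}, plus a single extra application of Lemma~\ref{lem_strong_convergence} for the measure term $B_n$.
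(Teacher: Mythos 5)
Your proof is correct and follows essentially the same approach as the paper: the same decomposition into the transport term $A_n$ (with $\mu_n$) and the measure term $B_n$ (with $\mu_n-\mu$), the same use of Lemma~\ref{lem_strong_convergence} for $B_n$, and the same adaptation of the Lemma~\ref{lem_integral_conv_1} argument for $A_n$ using the piecewise Lipschitz property of $K'''$ and the no-crossing condition. (The only cosmetic difference is that the paper applies Lemma~\ref{lem_strong_convergence} with $U=(-\infty,x)\cup(x,\infty)$, whereas you take $U=\R$ and set $g(x)=0$; both choices put the discontinuity at $x$ where the lemma's hypothesis allows it, so the two are equivalent.) You also correctly noticed and repaired the typo in the statement: the right-hand side of \eqref{eqn_integral_conv_3} should read $\md\mu(z)$, not $\md\mu_n(z)$.
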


\begin{proof}
We reuse much of the argument used in the proof of Lemma \ref{lem_integral_conv_1}; for this reason we just point out the differences. To see that \eqref{eqn_integral_conv_3} holds, consider
\begin{align}
& \lint{z \neq x}{}{K'''(c_n(x,t)-c_n(z,t))}{\mu_n(z)} - \lint{z \neq x}{}{K'''(c(x,t)-c(z,t))}{\mu(z)} \nonumber\\
=& \lint{z \neq x}{}{K'''(c_n(x,t)-c_n(z,t)) - K'''(c(x,t)-c(z,t))}{\mu_n(z)} \label{eqn_integral_conv_5}\\
&+ \lint{z \neq x}{}{K'''(c(x,t)-c(z,t))}{(\mu_n-\mu)(z)}. \label{eqn_integral_conv_6}
\end{align}
The term labelled \eqref{eqn_integral_conv_6} tends to $0$. This follows by applying Lemma \ref{lem_strong_convergence} to $U=(-\infty,x) \cup (x,\infty)$ and the function $\map{f}{\R}{\R}$ defined by $f(z)=K'''(c(x,t)-c(z,t))$, $z \neq x$, and $f(x)=0$. If $z$ is point of discontinuity of $f$ then either $z=x \in F$ or it must be a point of discontinuity of $c(\cdot,t)$, whence $z \in P$. 

The term labelled \eqref{eqn_integral_conv_5} also tends to $0$. Indeed, given $\ep>0$, similarly to above, for large enough $n$ its absolute value is bounded above by $(2\pn{K'''}{\infty} + 3\Lip(K'''))\ep$. This holds because $c_n(\cdot,t)$ and $c(\cdot,t)$ are strictly increasing on $M_n$ and $\R$, respectively, $K'''$ is Lipschitz when restricted to $(-\infty,0)$ or $(0,\infty)$, and the integral can be written as the sum of corresponding integrals on $(-\infty,x)$ and $(x,\infty)$. 
\end{proof}

Thus we have shown that $c$ and the corresponding map $h$ exist. Next we show that $\bar{h}$ has the required regularity properties. For this we will require some more notation.

We will consider the (dual) bounded Lipschitz (or Dudley) norm $\ndot_{BL}$ on $\mathcal{M}(\R)$, defined by
\[
    \n{\mu}_{BL} = \sup\set{\mu(f) := \lint{-\infty}{\infty}{f(x)}{\mu(x)}}{f \in \Xi},
\]
where $\Xi :=\set{f \in C_0(\R)}{\pn{f}{\infty}+\Lip(f) \leq 1}$, together with its associated metric $d$ given by $d(\mu,\nu)=\n{\mu-\nu}_{BL}$, $\mu,\nu \in \mathcal{M}(\R)$. This metric is related to the Wasserstein distance $W_1$ (see e.g.~\cite{bogachev2007}*{Section 8.3}). (Sometimes in the literature $\Xi$ is defined differently, with $\pn{f}{\infty}+\Lip(f)$ replaced by $\max\{\pn{f}{\infty},\Lip(f)\}$, but this change yields an equivalent norm to which the following arguments apply equally.) We will say that a map $\map{m}{\R^+}{\mathcal{M}(\R)}$ is $d$-Lipschitz if there exists a constant $L \in \R^+$ such that
\[
 d(m(s),m(t)) \leq L|s-t|
\]
whenever $s,t \in \R^+$.

\begin{theorem}\label{thm_regularity}
We have $\bar{h}(t) = K*h(t)\in H^3(\R)$ for all $t\in \R^+$. Moreover $\bar{h}\in C_b^{0,\frac{1}{2}}(\R^+;H^3(\R))$.
\end{theorem}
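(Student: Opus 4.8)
\emph{Overview.} The plan is to prove the two assertions separately. That $\bar{h}(t) \in H^3(\R)$ for each $t$, with a bound uniform in $t$, is a quick consequence of Young's inequality; the $\tfrac12$-H\"older continuity in time is obtained by combining a Lipschitz estimate for $t \mapsto h(t)$ in the metric $d$ with an interpolation inequality for convolution against $K$.

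\emph{Step 1: membership and boundedness.} First I would note that $K \in H^3(\R)$. From \eqref{eq:kernel}, $K$ and $K'$ are continuous, $K''$ is Lipschitz, and the weak third derivative of $K$ coincides a.e.\ with the classical $K'''$ on $\R\setminus\{0\}$, which is bounded; all of $K,K',K'',K'''$ decay exponentially, hence lie in $L^2(\R)$. (In Fourier terms, $\hat K(\xi) = (1+\alpha^2\xi^2)^{-2}$, so $(1+\xi^2)^3|\hat K(\xi)|^2$ is integrable, being $O(\xi^{-2})$ at infinity.) Since $h(t) = c(\cdot,t)_*\mu \in \mathcal{M}^+(\R)$ satisfies $\pn{h(t)}{1} = \pn{\mu}{1} \leq 1$, applying Young's inequality for the convolution of an $H^3$ function with a finite measure to each of the identities $\partial^k(K*h(t)) = (\partial^k K)*h(t)$, $0 \leq k \leq 3$, yields $\n{\bar{h}(t)}_{H^3} \leq \n{K}_{H^3}\pn{h(t)}{1} \leq \n{K}_{H^3}$ for all $t \in \R^+$. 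This establishes $\bar{h}(t) \in H^3(\R)$ and, simultaneously, $\sup_{t \in \R^+}\n{\bar{h}(t)}_{H^3} < \infty$.

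\emph{Step 2: $d$-Lipschitz continuity of $t \mapsto h(t)$.} Recall that \eqref{eqn_Lipschitz_time} was extended in the proof of Theorem \ref{theorem_general_existence} to all $x \in \R$ and $s,t \in \R^+$. Hence for any $f \in \Xi$ (so $\Lip(f) \leq 1$),
\[
 |(h(s)-h(t))(f)| = \left| \lint{-\infty}{\infty}{f(c(x,s)) - f(c(x,t))}{\mu(x)} \right| \leq \pn{K}{\infty}^2\pn{K'''}{\infty}|s-t|\,\pn{\mu}{1},
\]
and taking the supremum over $\Xi$ gives $d(h(s),h(t)) \leq L|s-t|$, where $L := \pn{K}{\infty}^2\pn{K'''}{\infty}$. (Equivalently, this says $t\mapsto h(t)$ is $d$-Lipschitz.)

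\emph{Step 3: interpolation and conclusion.} The key estimate --- and the step I expect to be the main obstacle --- is
\[
 \n{K*\nu}_{H^3} \leq C\,\pn{\nu}{1}^{1/2}\,\n{\nu}_{BL}^{1/2}, \qquad \nu \in \mathcal{M}(\R),
\]
for a constant $C$ depending only on $\alpha$; the exponent $\tfrac12$ arises naturally by interpolating the crude $L^1$-bound against a low-frequency bound coming from the metric $d$. I would prove it (or extract the corresponding estimate from \cite{onaraighpangsmith2022}) via Plancherel: $\n{K*\nu}_{H^3}^2 = \int_\R (1+\xi^2)^3|\hat K(\xi)|^2|\hat\nu(\xi)|^2\,\md\xi$; for $|\xi| > R$ one uses $|\hat\nu(\xi)| \leq \pn{\nu}{1}$ and the $O(\xi^{-2})$ decay of the weight to bound the tail by $CR^{-1}\pn{\nu}{1}^2$; for $|\xi| \leq R$ one uses $|\hat\nu(\xi)| \leq C(1+|\xi|)\n{\nu}_{BL}$ to bound the remainder by $CR\n{\nu}_{BL}^2$; then one optimises over $R$. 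The delicate point is that low-frequency bound: since $x \mapsto \me^{\pm i\xi x}$ is not in $C_0(\R)$, one cannot test $\nu$ against it directly in the definition of $\n{\cdot}_{BL}$, and must instead use smooth compactly supported approximants of $\cos(\xi\cdot)$ and $\sin(\xi\cdot)$ (each having supremum norm plus Lipschitz constant $O(1+|\xi|)$) and pass to the limit --- this is precisely where the framework of \cite{onaraighpangsmith2022} is convenient. Granting the displayed estimate, I would apply it to $\nu = h(s)-h(t)$: using $\pn{h(s)-h(t)}{1} \leq 2\pn{\mu}{1} \leq 2$ and Step 2,
\[
 \n{\bar{h}(s) - \bar{h}(t)}_{H^3} = \n{K*(h(s)-h(t))}_{H^3} \leq C\sqrt{2}\,(L|s-t|)^{1/2},
\]
so $\bar{h}$ is $\tfrac12$-H\"older from $\R^+$ into $H^3(\R)$. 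Together with the uniform bound of Step 1, this gives $\bar{h} \in C_b^{0,\frac{1}{2}}(\R^+;H^3(\R))$.
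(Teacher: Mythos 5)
Your proof is correct, but it takes a genuinely different route from the paper's. The paper delegates both halves to its companion work \cite{onaraighpangsmith2022}: membership $\bar{h}(t)\in H^3(\R)$ is deduced via a duality argument (a bounded operator $T:H^3(\R)\to C_0(\R)$ with adjoint $T^*m=K*m$, citing Proposition~2.2 there), and the $\tfrac12$-H\"older continuity is obtained by invoking Proposition~4.4 there, whose hypotheses are that $K\in W^{3,1}(\R)$, $K'''$ has bounded variation, and $t\mapsto h(t)$ is $d$-Lipschitz. Your Step~1 replaces the duality argument with Young's inequality for convolution of an $H^3$ kernel against a finite measure, which is shorter and yields the uniform bound $\sup_t\n{\bar{h}(t)}_{H^3}\le\n{K}_{H^3}$ in one line. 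Your Step~2 is essentially identical to the paper's (and incidentally fixes what appears to be a typo there, where the Lipschitz constant is printed as $\pn{K}{\infty}^2\pn{K}{\infty}$ rather than $\pn{K}{\infty}^2\pn{K'''}{\infty}$). Your Step~3 is the real divergence: instead of citing the companion paper's Proposition~4.4, you prove the interpolation inequality $\n{K*\nu}_{H^3}\le C\pn{\nu}{1}^{1/2}\n{\nu}_{BL}^{1/2}$ from scratch via Plancherel and a frequency cutoff, optimising the split point $R$. This correctly exposes where the exponent $\tfrac12$ comes from and why the decay $(1+\xi^2)^3|\hat K(\xi)|^2\sim\xi^{-2}$ matters. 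You also correctly flag the one delicate point: $\cos(\xi\cdot)$ and $\sin(\xi\cdot)$ are not in $C_0(\R)$, so the low-frequency estimate $|\hat\nu(\xi)|\le C(1+|\xi|)\n{\nu}_{BL}$ requires smooth compactly supported approximants — this can be done cleanly with cutoffs $\chi_n$ having $\Lip(\chi_n)\to 0$, so that $\pn{g\chi_n}{\infty}+\Lip(g\chi_n)\to\pn{g}{\infty}+\Lip(g)$ and the dominated convergence theorem passes the bound to the limit. In short: the paper's proof is shorter but opaque (its substance lives elsewhere), while yours is longer but self-contained and explanatory; the two are substantively different and both valid.
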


\begin{proof}
First, define a bounded linear map $T:H^3(\R)\rightarrow C_0(\R)$ by
\[
    (Tu)(x) = \langle K(\cdot-x)|u \rangle_{H^3(\R)}.
\]
Since $K\in H^3(\R)$ and is even, by \cite{onaraighpangsmith2022}*{Proposition 2.2}, $T$ is well-defined and the dual operator $\map{T^*}{\mathcal{M}(\R)}{H^3(\R)}$ is given by
\[
    T^*m = K*m.
\]
Thus $\bar{h}(t)\in H^3(\R)$ for all $t\in \R^+$.

Second, we show that the map $\map{h}{\R^+}{\mathcal{M}(\R)}$ is $d$-Lipschitz. Let $f \in \Xi$ and $s,t \in \R^+$. Then
\begin{align}
 |(h(s) - h(t)(f)| =& \left|\lint{-\infty}{\infty}{f(x)}{h(s)(x)} - \lint{-\infty}{\infty}{f(x)}{h(t)(x)} \right| \nonumber\\
 =& \left|\lint{-\infty}{\infty}{f(c(x,s))-f(c(x,t))}{\mu(x)}\right| \nonumber\\
 \leq& \lint{-\infty}{\infty}{|c(x,s))-c(x,t)|}{\mu(x)} \tag*{as $\Lip(f) \leq 1$} \\
 \leq& \lint{-\infty}{\infty}{\pn{K}{\infty}^2\pn{K}{\infty}|s-t|}{\mu(x)} \tag*{by \eqref{eqn_Lipschitz_time}} \nonumber\\
 \leq& \pn{K}{\infty}^2\pn{K}{\infty}|s-t|. \label{eqn_h_d-Lipschitz}
\end{align}
(As observed in the proof of Theorem \ref{theorem_general_existence}, \eqref{eqn_Lipschitz_time} applies whenever $x \in \R$ and $s,t \in \R^+$.) Consequently,
\[
 d(h(s),h(t)) = \sup\set{|(h(s) - h(t)(f)|}{f \in \Xi} \leq \pn{K}{\infty}^2\pn{K}{\infty}|s-t|.
\]
Finally, since $K\in W^{3,1}(\R)$, $K'''$ has bounded variation and $h$ is $d$-Lipschitz, by \cite{onaraighpangsmith2022}*{Proposition 4.4}, we have $\bar{h}\in C_b^{0,\frac{1}{2}}(\R^+;H^3(\R))$. 
\end{proof}

\section{Uniqueness}\label{sect_uniqueness}

In this section we show that the solutions $c \in I$ obtained above are unique. We will obtain a local uniqueness result before following with a global one. The strategy will be to use maps between metric spaces that have contraction-like properties. To this end we introduce some more notation. Recall the spaces $X_{M,T}$ and $I_{M,T}$ from Section \ref{sec:weak} and set $X_T=X_{\R,T}$ and $I_T=X_{\R,T}$. By compactness and local boundedness, we can equip $X_T$ with the seminorms
\[
\pn{c}{n} \;:=\; \sup\set{|c(x,t)|}{(x,t) \in [-n,n] \times [0,T]}, \qquad n \in \N.
\]
(These seminorms endow $X_T$ with a Fr\'echet space structure.) Also fix a strictly increasing map $\map{c_0}{\R}{\R}$ and define $\map{\Phi,\Phi_n}{I_T}{X_T}$, $n \in \N$, by
\begin{equation}\label{eqn_Phi_2}
 (\Phi c)(x,t) = c_0(x) + \lint{0}{t}{H(c,x,s)^2 \lint{z \neq x}{}{K'''(c(x,s)-c(z,s))}{\mu(z)}}{s}
\end{equation}
and
\begin{equation}\label{eqn_Phi_3}
 (\Phi_n c)(x,t) = c_0(x) + \lint{0}{t}{H_n(c,x,s)^2 \lint{\substack{|z| \leq n \\ z \neq x}}{}{K'''(c(x,s)-c(z,s))}{\mu(z)}}{s},
\end{equation}
where
\begin{equation}\label{eqn_H_truncated}
 H_n(c,x,t) \;:=\; \lint{|z|\leq n}{}{K(c(x,t)-c(z,t))}{\mu(z)}.
\end{equation}

We continue with two lemmas.

\begin{lemma}\label{lem_cont}
Given $c,c' \in I_T$, we have $\pn{\Phi_n c - \Phi_n c'}{n} \leq AT\pn{c-c'}{n}$.
\end{lemma}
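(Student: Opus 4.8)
The plan is to estimate the two terms in the definition of $\Phi_n$, namely the factor $H_n(c,x,s)^2$ and the integral factor $\int_{|z|\le n,\,z\ne x} K'''(c(x,s)-c(z,s))\,\md\mu(z)$, separately, and then combine them using the standard ``$ab-a'b' = (a-a')b + a'(b-b')$'' trick, finally integrating in $s$ over $[0,t]$ and taking the supremum over $(x,t) \in [-n,n]\times[0,T]$. Throughout, the key boundedness facts are $\pn{\mu}{1}\le 1$, $|H_n(c,x,s)| \le \pn{K}{\infty}$, and that the relevant integral of $K'''$ is bounded by $\pn{K'''}{\infty}$; the key Lipschitz facts are that $K$ has Lipschitz constant $\Lip(K)$ on all of $\R$, while $K'''$ is Lipschitz with constant $\Lip(K''')$ only on $(-\infty,0)$ and on $(0,\infty)$ separately. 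Note that for $x,z \in \R$ with $x\ne z$, the no-crossing property \eqref{eqn_no-crossing} of $c,c' \in I_T$ guarantees that $c(x,s)-c(z,s)$ and $c'(x,s)-c'(z,s)$ have the same (nonzero) sign, so that $|K'''(c(x,s)-c(z,s)) - K'''(c'(x,s)-c'(z,s))| \le \Lip(K''')\big(|c(x,s)-c'(x,s)| + |c(z,s)-c'(z,s)|\big)$ — this sign-consistency is what makes the single-variable Lipschitz bound on $K'''$ applicable here.

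Concretely, first I would bound $|H_n(c,x,s) - H_n(c',x,s)| \le \Lip(K)\int_{|z|\le n}\big(|c(x,s)-c'(x,s)| + |c(z,s)-c'(z,s)|\big)\,\md\mu(z) \le 2\Lip(K)\pn{c-c'}{n}$ (using $\pn{\mu}{1}\le 1$ and that all relevant $x,z$ lie in $[-n,n]$), and hence $|H_n(c,x,s)^2 - H_n(c',x,s)^2| = |H_n(c,x,s)+H_n(c',x,s)|\cdot|H_n(c,x,s)-H_n(c',x,s)| \le 2\pn{K}{\infty}\cdot 2\Lip(K)\pn{c-c'}{n} = 4\pn{K}{\infty}\Lip(K)\pn{c-c'}{n}$. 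Second, by the sign-consistency remark above, $\big|\int_{|z|\le n,\,z\ne x}\big(K'''(c(x,s)-c(z,s)) - K'''(c'(x,s)-c'(z,s))\big)\md\mu(z)\big| \le \Lip(K''')\int_{|z|\le n}\big(|c(x,s)-c'(x,s)| + |c(z,s)-c'(z,s)|\big)\md\mu(z) \le 2\Lip(K''')\pn{c-c'}{n}$.

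Combining these via $H_n(c,x,s)^2 \cdot \big(\text{integral}_c\big) - H_n(c',x,s)^2\cdot\big(\text{integral}_{c'}\big)$, the difference is bounded by $|H_n(c,x,s)^2 - H_n(c',x,s)^2|\cdot\pn{K'''}{\infty} + \pn{K}{\infty}^2\cdot 2\Lip(K''')\pn{c-c'}{n} \le \big(4\pn{K}{\infty}\Lip(K)\pn{K'''}{\infty} + 2\pn{K}{\infty}^2\Lip(K''')\big)\pn{c-c'}{n} = A\pn{c-c'}{n}$, exactly recovering the constant $A$ from \eqref{eqn_A_2}. Since the integrand of $(\Phi_n c)(x,t) - (\Phi_n c')(x,t)$ is bounded by $A\pn{c-c'}{n}$ for each $s \in [0,t] \subseteq [0,T]$, integrating gives $|(\Phi_n c)(x,t) - (\Phi_n c')(x,t)| \le At\,\pn{c-c'}{n} \le AT\,\pn{c-c'}{n}$ for all $(x,t) \in [-n,n]\times[0,T]$; taking the supremum yields $\pn{\Phi_n c - \Phi_n c'}{n} \le AT\,\pn{c-c'}{n}$.

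The computation is essentially routine; the one genuine subtlety — and the step I would be most careful about — is the justification that the $K'''$ difference can be controlled by a single Lipschitz constant, which rests crucially on the no-crossing condition keeping $c(x,s)-c(z,s)$ and $c'(x,s)-c'(z,s)$ on the same side of $0$. I would make sure to state this explicitly rather than gloss over it, since $K'''$ is genuinely discontinuous at the origin. Everything else (pulling constants out, using $\pn{\mu}{1}\le 1$, restricting attention to $|x|,|z|\le n$ so the seminorm $\pn{\cdot}{n}$ dominates the pointwise differences) is bookkeeping.
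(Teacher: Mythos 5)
Your proof is correct and follows essentially the same route as the paper's: the identical $ab-a'b'=(a-a')b+a'(b-b')$ decomposition, the bound $|H_n(c,x,s)-H_n(c',x,s)|\le 2\Lip(K)\pn{c-c'}{n}$, and the use of the one-sided Lipschitz constant for $K'''$ (justified by the no-crossing condition preserving the sign of $c(x,s)-c(z,s)$), yielding the constant $A$ exactly. You make the sign-consistency point more explicit than the paper does — the paper handles it implicitly by splitting the $z$-integral into $z<x$ and $x<z$ — but the underlying argument is the same.
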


\begin{proof}
Given $(x,t) \in [-n,n] \times [0,T]$, we calculate
\begin{align}
 & (\Phi_n c)(x,t) - (\Phi_n c')(x,t) \nonumber\\
 =& \lint{0}{t}{\big(H_n(c,x,s)^2 - H_n(c',x,s)^2\big)\lint{\substack{|z| \leq n \\ z \neq x}}{}{K'''(c(x,s)-c(z,s))}{\mu(z)}}{s} \nonumber\\
 &+ \lint{0}{t}{H_n(c',x,s)^2\lint{\substack{|z| \leq n \\ z \neq x}}{}{K'''(c(x,s)-c(z,s))-K'''(c'(x,s)-c'(z,s))}{\mu(z)}}{s}. \label{eqn_cont_1}
\end{align}
Using \eqref{eqn_H_truncated} and the fact that $\pn{\mu}{1} \leq 1$, we see that
\[
|H_n(c,x,t)-H_n(c',x,t)| \leq 2\Lip(K)\pn{c-c'}{n}.
\]
Because $c,c' \in I_T$ and $K'''\restrict{(0,\infty)}$ is Lipschitz, we can make the estimate
\begin{align*}
& \left|\lint{\substack{|z| \leq n \\ z < x}}{}{K'''(c(x,s)-c(z,s))-K'''(c'(x,s)-c'(z,s))}{\mu(z)}\right|\\
\leq & \Lip(K''')\lint{\substack{|z| \leq n \\ z < x}}{}{|c(x,s)-c'(x,s)| + |c(z,s)-c'(z,s)|}{\mu(z)}\\
\leq & 2\Lip(K''')\pn{c-c'}{n}\lint{\substack{|z| \leq n \\ z < x}}{}{}{\mu(z)}.
\end{align*}
Similarly, as $K'''\restrict{(-\infty,0)}$ is Lipschitz
\begin{align*}
& \left|\lint{\substack{|z| \leq n \\ x < z}}{}{K'''(c(x,s)-c(z,s))-K'''(c'(x,s)-c'(z,s))}{\mu(z)}\right|\\
\leq & 2\Lip(K''')\pn{c-c'}{n}\lint{\substack{|z| \leq n \\ x < z}}{}{}{\mu(z)}.
\end{align*}
Applying the last three estimates to \eqref{eqn_cont_1} and using again the fact that $\pn{\mu}{1} \leq 1$, we have
\begin{align*}
 & |(\Phi_n c)(x,t) - (\Phi_n c')(x,t)| \\
 \leq\;& 4\pn{K}{\infty}\Lip(K)\pn{K'''}{\infty}\pn{c-c'}{n}t \;+\; 2\pn{K}{\infty}^2\Lip(K''')\pn{c-c'}{n}t \\
 \leq\;& AT\pn{c-c'}{n}. \qedhere
\end{align*}
\end{proof}

\begin{lemma}\label{lem_cont_2}
Given $c \in I_T$, we have $\lim_n \pn{\Phi_n c - \Phi c}{n}=0$.
\end{lemma}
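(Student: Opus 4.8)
The plan is to bound $(\Phi_n c)(x,t) - (\Phi c)(x,t)$, uniformly over $(x,t) \in [-n,n] \times [0,T]$, by a fixed multiple of the tail mass $\mu(\set{z \in \R}{|z| > n})$, and then to invoke the finiteness of $\mu$ to send this quantity to zero.

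Concretely, I would fix $(x,t) \in [-n,n] \times [0,T]$ and, subtracting \eqref{eqn_Phi_2} from \eqref{eqn_Phi_3} and adding and subtracting $H(c,x,s)^2\int_{|z|\leq n,\,z\neq x}K'''(c(x,s)-c(z,s))\,\md\mu(z)$, rewrite the $s$-integrand of $(\Phi_n c)(x,t) - (\Phi c)(x,t)$ as
\[
\big(H_n(c,x,s)^2 - H(c,x,s)^2\big)\lint{\substack{|z| \leq n \\ z \neq x}}{}{K'''(c(x,s)-c(z,s))}{\mu(z)} \;-\; H(c,x,s)^2 \int_{|z| > n} K'''(c(x,s)-c(z,s))\,\md\mu(z),
\]
where I have used that $\set{z\in\R}{z\neq x}\setminus\set{z\in\R}{|z|\leq n}=\set{z\in\R}{|z|>n}$ since $|x|\leq n$. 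Both inner integrals are well defined because $c \in I_T$ satisfies \eqref{eqn_no-crossing}, so $c(x,s) \neq c(z,s)$ whenever $z \neq x$ and $K'''$ is never evaluated at its discontinuity at the origin. For the second summand, $|H(c,x,s)| \leq \pn{K}{\infty}$ and $|K'''| \leq \pn{K'''}{\infty}$ give the bound $\pn{K}{\infty}^2\pn{K'''}{\infty}\,\mu(\set{z\in\R}{|z|>n})$. For the first summand I would factor $|H_n(c,x,s)^2 - H(c,x,s)^2| \leq 2\pn{K}{\infty}\,|H_n(c,x,s) - H(c,x,s)|$, then observe that $|H_n(c,x,s) - H(c,x,s)| = \big|\int_{|z|>n} K(c(x,s)-c(z,s))\,\md\mu(z)\big| \leq \pn{K}{\infty}\,\mu(\set{z\in\R}{|z|>n})$, while $\big|\int_{|z|\leq n,\, z\neq x} K'''(c(x,s)-c(z,s))\,\md\mu(z)\big| \leq \pn{K'''}{\infty}\pn{\mu}{1} \leq \pn{K'''}{\infty}$; hence the first summand is bounded by $2\pn{K}{\infty}^2\pn{K'''}{\infty}\,\mu(\set{z\in\R}{|z|>n})$.

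Integrating over $s \in [0,T]$ then gives
\[
|(\Phi_n c)(x,t) - (\Phi c)(x,t)| \;\leq\; 3T\pn{K}{\infty}^2\pn{K'''}{\infty}\,\mu(\set{z \in \R}{|z| > n}),
\]
a bound independent of the choice of $(x,t) \in [-n,n]\times[0,T]$, so that $\pn{\Phi_n c - \Phi c}{n} \leq 3T\pn{K}{\infty}^2\pn{K'''}{\infty}\,\mu(\set{z\in\R}{|z|>n})$. Since $\set{z\in\R}{|z|>n} \downarrow \varnothing$ and $\mu$ is finite, continuity of measure from above yields $\mu(\set{z\in\R}{|z|>n}) \to 0$, which completes the argument. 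There is no genuine obstacle here; the only points meriting attention are that each estimate must be reduced to the $x$-independent tail mass and the universal constants $\pn{K}{\infty}$ and $\pn{K'''}{\infty}$, so that the resulting bound is uniform over $[-n,n]$, and that it is precisely the no-crossing condition \eqref{eqn_no-crossing} that licenses evaluating $K'''$ away from its singular point.
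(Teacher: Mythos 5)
Your proposal is correct and follows essentially the same route as the paper: the identical decomposition into the $\big(H_n^2-H^2\big)$ term plus the tail term over $|z|>n$, the same bound $|H_n(c,x,s)-H(c,x,s)|\leq\pn{K}{\infty}\mu(\R\setminus[-n,n])$, and the same final estimate $3T\pn{K}{\infty}^2\pn{K'''}{\infty}\mu(\R\setminus[-n,n])$. The paper leaves the tail-mass convergence and the role of \eqref{eqn_no-crossing} implicit, so your spelling these out is only a presentational difference.
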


\begin{proof}
Let $(x,t) \in [-n,n] \times [0,T]$. Then we have
\begin{align}
 & (\Phi_n c)(x,t) - (\Phi c)(x,t) \nonumber\\
 =& \lint{0}{t}{\big(H_n(c,x,s)^2 - H(c,x,s)^2\big)\lint{\substack{|z| \leq n \\ z \neq x}}{}{K'''(c(x,s)-c(z,s))}{\mu(z)}}{s} \nonumber\\
 -& \lint{0}{t}{H(c,x,s)^2\lint{\substack{|z| > n \\ z \neq x}}{}{K'''(c(x,s)-c(z,s))}{\mu(z)}}{s}. \label{eqn_cont_2_1}
\end{align}
By \eqref{eqn_H} and \eqref{eqn_H_truncated}, we have
\[
|H_n(c,x,t)-H(c,x,t)| \leq \pn{K}{\infty}\mu(\R\setminus[-n,n]).
\]
Using this and \eqref{eqn_cont_2_1} we can estimate
\[
|(\Phi_n c)(x,t) - (\Phi c)(x,t)| \leq 3T\pn{K}{\infty}^2\pn{K'''}{\infty}\mu(\R\setminus[-n,n]).
\]
from which the result follows.
\end{proof}

Now we can prove a local uniqueness result. 

\begin{proposition}\label{prop_local_uniqueness}
Suppose that $c,c' \in I_{\frac{1}{A}}$ have the property that $c(x,0)=c_0(x) = c'(x,0)$, $x \in M$, and satisfy \eqref{eqn_ODE_system} for $(x,t) \in \R \times (0,\frac{1}{A})$. Then $c=c'$.
\end{proposition}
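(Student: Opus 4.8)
The plan is to read the hypotheses as fixed-point equations for $\Phi$ and then convert the borderline contraction estimate of Lemma~\ref{lem_cont} into a Gr\"onwall argument. First I would note that a curve $c \in I_{1/A}$ with $c(x,0)=c_0(x)$ satisfies \eqref{eqn_ODE_system} on $\R \times (0,\tfrac1A)$ if and only if $c = \Phi c$ on $\R \times [0,\tfrac1A]$: the forward implication is the Fundamental Theorem of Calculus applied to the $C^1$ curves $c(x,\cdot)$, using that the integrand on the right of \eqref{eqn_ODE_system} is continuous in $t$ (exactly as shown near the end of the proof of Theorem~\ref{theorem_general_existence}) and so extends continuously to the endpoints. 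Hence under the hypotheses $c=\Phi c$ and $c'=\Phi c'$. Setting $T=\tfrac1A$, the obstacle is now visible: Lemma~\ref{lem_cont} only gives $\pn{\Phi_n c-\Phi_n c'}{n}\le AT\pn{c-c'}{n}=\pn{c-c'}{n}$, a contraction factor of exactly $1$, so the Banach fixed-point theorem is useless as a black box. The fix is to keep the time integral unbounded rather than dominating it by $T$.

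Fix $n \in \N$. For $(x,t)\in[-n,n]\times[0,T]$ I would insert $\Phi_n$ and split:
\[
 |c(x,t)-c'(x,t)| \;\le\; |(\Phi c-\Phi_n c)(x,t)| + |(\Phi_n c-\Phi_n c')(x,t)| + |(\Phi_n c'-\Phi c')(x,t)|.
\]
By the estimate obtained inside the proof of Lemma~\ref{lem_cont_2}, the first and third terms are each at most $\ep_n := 3T\pn{K}{\infty}^2\pn{K'''}{\infty}\mu(\R\setminus[-n,n])$, and $\ep_n\to 0$ since $\mu$ is finite. For the middle term I would re-run the computation in the proof of Lemma~\ref{lem_cont} verbatim but \emph{without} the step $\lint{0}{t}{(\cdot)}{s}\le t\sup(\cdot)\le T\sup(\cdot)$; since $A=4\pn{K}{\infty}\Lip(K)\pn{K'''}{\infty}+2\pn{K}{\infty}^2\Lip(K''')$ by \eqref{eqn_A_2}, the same manipulations give
\[
 |(\Phi_n c-\Phi_n c')(x,t)| \;\le\; A\lint{0}{t}{g_n(s)}{s}, \qquad g_n(s) := \sup\set{|c(y,s)-c'(y,s)|}{|y|\le n},
\]
where only indices $|y|\le n$ appear because both $\Phi_n$ and $H_n$ integrate over $\{|z|\le n\}$ and $|x|\le n$.

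Taking the supremum over $|x|\le n$ then yields the integral inequality $g_n(t)\le 2\ep_n + A\lint{0}{t}{g_n(s)}{s}$ for $t\in[0,T]$. Here $g_n$ is continuous on $[0,T]$ — indeed it is the supremum of a family of curves that are $\pn{K}{\infty}^2\pn{K'''}{\infty}$-Lipschitz in $t$, uniformly in $x$, since \eqref{eqn_ODE_system} bounds $|c_t(x,\cdot)|$ and $|c'_t(x,\cdot)|$ by $\pn{K}{\infty}^2\pn{K'''}{\infty}$ — so Gr\"onwall's inequality applies and gives $g_n(t)\le 2\ep_n\me^{At}\le 2\me\,\ep_n$ for all $t\in[0,T]$, using $AT=1$. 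Letting $n\to\infty$: for any fixed $(x,t)\in\R\times[0,T]$ and all $n\ge|x|$ we have $|c(x,t)-c'(x,t)|\le g_n(t)\le 2\me\,\ep_n\to 0$, whence $c=c'$. The only genuinely delicate point is the one flagged above — the contraction constant in Lemma~\ref{lem_cont} is exactly $1$ at $T=\tfrac1A$, so one must extract the time-localised estimate hidden in that lemma and feed it to Gr\"onwall rather than quote the lemma directly; the rest is bookkeeping with estimates already assembled in Lemmas~\ref{lem_cont} and~\ref{lem_cont_2}. (One could avoid $\Phi_n$ altogether by working directly with $\delta(t):=\sup_{x\in\R}|c(x,t)-c'(x,t)|$, which is finite because $|c(x,t)-c'(x,t)|\le 2\pn{K}{\infty}^2\pn{K'''}{\infty}t$, and running the same no-crossing split on $\Phi$ itself to get $\delta(t)\le A\lint{0}{t}{\delta(s)}{s}$; I would keep the $\Phi_n$ version as the primary route since it matches the machinery already introduced.)
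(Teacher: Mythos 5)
Your proof is correct, but it takes a genuinely different route from the paper's. The paper argues by contradiction: assuming $c(x,t)\ne c'(x,t)$ for some $t<\tfrac1A$, it restricts both curves to $I_T$ for some $T\in(t,\tfrac1A)$ so that $AT<1$, applies Lemma~\ref{lem_cont} as a black-box contraction with constant $AT$, uses Lemma~\ref{lem_cont_2} together with the triangle inequality to push $\pn{\Phi_n c-\Phi_n c'}{n}$ arbitrarily close to $\pn{c-c'}{n}$, and reaches a contradiction. Its key move is precisely the one you flag as useless at $T=\tfrac1A$: shrinking $T$ strictly below $\tfrac1A$ so that $AT<1$ makes Lemma~\ref{lem_cont} a strict contraction. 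You instead stay at $T=\tfrac1A$ throughout, reopen the proof of Lemma~\ref{lem_cont} to keep the time integral and obtain the pointwise-in-time bound $A\lint{0}{t}{g_n(s)}{s}$, and then close with Gr\"onwall. Both routes feed on the same two lemmas (yours uses their internal estimates rather than their stated conclusions), so the mathematical content is shared; what the Gr\"onwall version buys is a direct, contradiction-free argument that works on the closed interval $[0,\tfrac1A]$ without needing to localize $T$, and it produces the quantitative decay $g_n(t)\le 2\me\,\ep_n$ rather than a bare contradiction. One small accounting note: each map $t\mapsto|c(x,t)-c'(x,t)|$ is in fact $2\pn{K}{\infty}^2\pn{K'''}{\infty}$-Lipschitz (you need both $c$ and $c'$), not $\pn{K}{\infty}^2\pn{K'''}{\infty}$-Lipschitz as written; this changes nothing in the argument, since all you need is continuity of $g_n$. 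Your parenthetical alternative avoiding $\Phi_n$ altogether by working with $\delta(t):=\sup_{x\in\R}|c(x,t)-c'(x,t)|$ is also sound and arguably the cleanest of all, provided one checks that $\delta$ is finite and measurable, which follows from the uniform Lipschitz-in-$t$ bound exactly as you indicate.
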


\begin{proof}
Given such $c$ and $c'$ we observe that $\Phi c = c$ and $\Phi c' = c'$. By continuity of the maps $c(x,\cdot)$, $x \in M$, it is sufficient to show that $c(x,t) = c'(x,t)$ for all $(x,t) \in \R \times [0,\frac{1}{A})$. For a contradiction assume that $c(x,t) \neq c'(x,t)$ for some $(x,t) \in \R \times [0,\frac{1}{A})$. Pick $T \in (\frac{1}{A},t)$ and hereafter consider the restrictions of $c$ and $c'$ to $I_T$ (still labelled $c$ and $c'$, respectively).

Given $n \geq |x|$, we have $\pn{c-c'}{n} \geq |c(x,t) - c'(x,t)| > 0$. By Lemma \ref{lem_cont}
\[
\pn{\Phi_n c-\Phi_n c'}{n} \leq AT\pn{c-c'}{n}.
\] 
By the triangle and reverse triangle inequalities
\begin{align*}
|\pn{c-c'}{n} - \pn{\Phi_n c - \Phi_n c'}{n}| \leq& \pn{(c-\Phi_n c) - (c'-\Phi_n c')}{n}\\
\leq& \pn{(c-\Phi_n c)}{n} + \pn{(c'-\Phi_n c')}{n}.
\end{align*}
Hence by Lemma \ref{lem_cont_2}, there exists $N \in \N$, $N \geq |x|$, such that
\[
 |\pn{c-c'}{n} - \pn{\Phi_n c - \Phi_n c'}{n}| \;<\; (1-AT)|c(x,t) - c'(x,t)|,
\]
whenever $n \geq N$. Therefore, for such $n$,
\[
\pn{c-c'}{n} \geq AT\pn{c-c'}{n} + (1-AT)|c(x,t) - c'(x,t)| \;>\; \pn{c-c'}{n}.
\]
This contradiction ensures that $c=c'$.
\end{proof}

With this result we can prove global uniqueness.

\begin{theorem}\label{theorem_global_uniqueness}
Suppose that $c,c' \in I$ satisfy the hypotheses of Theorem \ref{theorem_ODE}. Then $c=c'$.
\end{theorem}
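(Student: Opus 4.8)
The plan is to promote the local uniqueness result, Proposition \ref{prop_local_uniqueness}, to a global one by a continuation argument, exploiting the fact that the right-hand side of \eqref{eqn_ODE_system} has no explicit dependence on $t$ (only through $c$), so that a solution may be restarted from any time.

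First I would observe that, since $c$ and $c'$ satisfy the hypotheses of Theorem \ref{theorem_ODE}, they share the initial value $c(x,0)=x=c'(x,0)$, and the identity map is strictly increasing; hence Proposition \ref{prop_local_uniqueness}, applied with $c_0=\mathrm{id}$ to the restrictions of $c$ and $c'$ to $I_{1/A}$, yields $c=c'$ on $\R\times[0,\tfrac1A]$.

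The continuation step then proceeds by induction on $k\ge 0$ with the claim: $c=c'$ on $\R\times[0,\tfrac{k+1}{A}]$. The base case $k=0$ is the previous paragraph. Assuming the claim for $k$, set $c_0^{(k)}(x)=c(x,\tfrac{k}{A})=c'(x,\tfrac{k}{A})$; since $c\in I$ satisfies \eqref{eqn_no-crossing} at time $\tfrac{k}{A}$, the map $c_0^{(k)}$ is strictly increasing. Define the shifted curves $\tilde c(x,s)=c(x,\tfrac{k}{A}+s)$ and $\tilde c'(x,s)=c'(x,\tfrac{k}{A}+s)$ for $(x,s)\in\R\times[0,\tfrac1A]$. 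Because \eqref{eqn_ODE_system} is autonomous in the above sense, $\tilde c$ and $\tilde c'$ lie in $I_{1/A}$, have common initial value $c_0^{(k)}$, and satisfy \eqref{eqn_ODE_system} on $\R\times(0,\tfrac1A)$ with $c_0$ replaced by $c_0^{(k)}$. Applying Proposition \ref{prop_local_uniqueness} again (now with $c_0=c_0^{(k)}$) gives $\tilde c=\tilde c'$, i.e.\ $c=c'$ on $\R\times[\tfrac{k}{A},\tfrac{k+1}{A}]$; together with the inductive hypothesis this proves the claim for $k+1$. Letting $k\to\infty$ gives $c=c'$ on all of $\Omega$.

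The only point requiring real care is verifying that the shifted pair $(\tilde c,\tilde c')$ genuinely meets the hypotheses of Proposition \ref{prop_local_uniqueness}: the no-crossing property is inherited at once, but one must check that $\tilde c$ solves \eqref{eqn_ODE_system} with initial datum $c_0^{(k)}$ — which is exactly where the absence of explicit $t$-dependence in the right-hand side of \eqref{eqn_ODE_system} is used — and that $c_0^{(k)}$ is strictly increasing, which is precisely \eqref{eqn_no-crossing} evaluated at $t=\tfrac{k}{A}$. Everything else is routine book-keeping. Alternatively, one could replace the induction by a connectedness argument: let $T^*=\sup\set{T\ge 0}{c=c'\text{ on }\R\times[0,T]}$, note $T^*\ge\tfrac1A>0$, use continuity of the curves $c(x,\cdot)$, $c'(x,\cdot)$ in time to see the supremum is attained, and derive a contradiction from the assumption $T^*<\infty$ by the same restarting device.
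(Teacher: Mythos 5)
Your proposal is correct and follows essentially the same route as the paper: local uniqueness (Proposition \ref{prop_local_uniqueness}) is propagated globally by time-shifting, using the autonomy of \eqref{eqn_ODE_system} and the fact that \eqref{eqn_no-crossing} makes the restarted initial datum strictly increasing. The paper phrases this as your ``alternative'' supremum-and-contradiction argument (taking $T_0=T_1-\tfrac{1}{2A}$ and extending past $T_1$), whereas your main presentation is a direct induction in steps of $\tfrac1A$; the two are interchangeable.
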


\begin{proof} For a contradiction, we suppose otherwise that $c \neq c'$. Then
\[
T_1 \;:=\; \sup\set{t \in \R^+}{c(x,s) = c'(x,s) \text{ whenever }(x,s) \in [0,t] \times M} \;<\; \infty.
\]
By Proposition \ref{prop_local_uniqueness}, $T_1 \geq \frac{1}{A}$. Let $T_0 = T_1-\frac{1}{2A}$. Define $d,d' \in I_{\frac{1}{A}}$ by
\[
d(x,t) = c(x,T_0+t) \;\text{and}\; d'(x,t) = c'(x,T_0+t), \quad (x,t) \in \R \times [0,{\ts\frac{1}{A}}].
\]
Using \eqref{eqn_H}, observe that
\begin{align*}
H(d,x,t) \;&=\; \lint{-\infty}{\infty}{K(d(x,t)-d(z,t))}{\mu(z)} \\
&=\; \lint{-\infty}{\infty}{K(c(x,T_0+t)-c(z,T_0+t))}{\mu(z)} = H(c,x,T_0+t).
\end{align*}
Hence for $(x,t) \in \R \times (0,\frac{1}{A})$ we have
\begin{align*}
d_t(x,t) \;&=\; c_t(x,T_0+t)\\
&=\; H(c,x,T_0+t)^2\lint{z \neq x}{}{K'''(c(x,T_0+t)-c(z,T_0+t))}{\mu(z)}\\
&=\; H(d,x,t)^2\lint{z \neq x}{}{K'''(d(x,t)-d(z,t))}{\mu(z)},
\end{align*}
so $d$ satisfies \eqref{eqn_ODE_system} for $(x,t) \in \R \times (0,\frac{1}{A})$. The same applies to $d'$. Since $d(\cdot,0)=c(\cdot,T_0)=c'(\cdot,T_0)=d'(\cdot,0)$ is strictly increasing, $d=d'$ by Proposition \ref{prop_local_uniqueness}. However, this implies that $c(x,t) = c'(x,t)$ for all $(x,t) \in \R \times [T_0,T_0+\frac{1}{A}]$. Since $T_0+\frac{1}{A} > T_1$, we get a contradiction. 
\end{proof}

\section{Final remarks}\label{sect_final_remarks}

We begin this section with a remark about continuity.

\begin{remark}\label{rem_discontinuity}
We already know from the proof of Theorem \ref{theorem_general_existence} that if $x \notin P$ then $x$ is a point of continuity of $c(\cdot,t)$ for all $t \in \R^+$. Conversely, we see straightaway from \eqref{eqn_c_lower_est} that
\begin{equation}\label{eqn_lower_estimate_refined}
c(y,t)-c(x,t) \geq \frac{\Lambda_{x,y}}{A}(1-\me^{-At})
\end{equation}
whenever $x < y$ and $t \in (0,\infty)$. Thus
\[
 c(x,t) + \frac{\pn{K}{\infty}^2\pn{K'''}{\infty}\mu(\{x\})^3}{A}(1-\me^{-At}) \leq \lim_{y \to x_+} c(y,t),
\]
and 
\[
 \lim_{y \to x_+} c(y,t) \leq c(x,t) - \frac{\pn{K}{\infty}^2\pn{K'''}{\infty}\mu(\{x\})^3}{A}(1-\me^{-At})
\]
whenever $t > 0$. Thus we have quantitative estimates which show that if $x \in P$ then $x$ is not a point of left or right-continuity of $c(\cdot,t)$, for any $t>0$. 
\end{remark}

Next, we show that the class of solutions of \eqref{eq:weak} is wider than the one hitherto established via push-forward measures as described above. In so doing, we show that, with respect to this wider class, we do not have uniqueness of solutions. In our opinion this provides justification for pursuing the (longer and more technical) push-forward approach.

The starting point for obtaining this wider class is the following metric Arzel\`a-Ascoli theorem, which is a simplification of \cite{ambrosio2005}*{Proposition 3.3.1}.

\begin{theorem}[cf. \cite{ambrosio2005}*{Proposition 3.3.1}]\label{thm:arzela}
Let $(X,\tau)$ be a se\-quen\-tially compact Hausdorff topological space, and let $d$ be a $\tau$-lower semi\-con\-ti\-nuous metric on $X$. Let $f_n:\R^+\rightarrow X$ such that
\begin{equation}
    \limsup_{n\rightarrow\infty} d(f_n(s),f_n(t)) \leq L|s-t| \qquad \forall\,s,t\in\R^+,
\end{equation}
for some $L\geq0$. Then there exists a subsequence of $(f_n)$, labelled in the same way, and $f:\R^+\rightarrow X$ such that
\begin{equation}
    f_n(t) \xrightarrow{\tau} f(t) \qquad \forall\,t\in\R^+,
\end{equation}
and $f$ is $d$-Lipschitz with Lipschitz constant $L$.
\end{theorem}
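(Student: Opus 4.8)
The plan is to run the classical diagonal-extraction argument, adapted to the fact that $\tau$ need not be metrizable. Fix an enumeration $\Q^+ = \{q_1,q_2,\dots\}$ of the non-negative rationals, which is dense in $\R^+$. Using sequential compactness of $(X,\tau)$, extract from $(f_n)$ a subsequence along which $f_n(q_1)$ $\tau$-converges; from that a further subsequence along which $f_n(q_2)$ $\tau$-converges; and so on; then pass to the diagonal subsequence, which I relabel $(f_n)$. Along this subsequence $f_n(q)$ $\tau$-converges for every $q\in\Q^+$, and since $\tau$ is Hausdorff the limit is unique; define $f(q)$ to be this limit. This already shows $f\restrict{\Q^+}$ is well defined.

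The substance of the proof is upgrading pointwise $\tau$-convergence on $\Q^+$ to pointwise $\tau$-convergence on all of $\R^+$. Fix $t\in\R^+$. I would show that every subsequence of $(f_n(t))_n$ has a further subsequence that $\tau$-converges, and that all such subsequential limits coincide. Existence of a convergent sub-subsequence is immediate from sequential compactness. For uniqueness of the limit: if $f_{n_{k_j}}(t)\xrightarrow{\tau} y$, then for each $q\in\Q^+$ the pair $(f_{n_{k_j}}(q),f_{n_{k_j}}(t))$ converges to $(f(q),y)$ in $\tau\times\tau$, so $\tau$-lower semicontinuity of $d$ together with the hypothesis gives
\[
d(f(q),y) \;\leq\; \liminf_{j} d\big(f_{n_{k_j}}(q),f_{n_{k_j}}(t)\big) \;\leq\; \limsup_{n} d(f_n(q),f_n(t)) \;\leq\; L|q-t|.
\]
If $y'$ is the limit of another sub-subsequence, the triangle inequality yields $d(y,y')\leq 2L|q-t|$ for every $q\in\Q^+$, and letting $q\to t$ through $\Q^+$ forces $d(y,y')=0$, hence $y=y'$ since $d$ is a metric. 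Denote the common value $f(t)$ (this is consistent with the earlier definition when $t\in\Q^+$). Finally I invoke the elementary topological fact that if every subsequence of a sequence $(x_n)$ has a further subsequence converging to a fixed point $x$, then $x_n\to x$ (otherwise some subsequence would remain outside a neighbourhood of $x$, yet it would have a sub-subsequence converging to $x$, a contradiction). Thus $f_n(t)\xrightarrow{\tau} f(t)$ for all $t\in\R^+$.

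It remains to verify that $f$ is $d$-Lipschitz with constant $L$: for $s,t\in\R^+$ we have $(f_n(s),f_n(t))\to(f(s),f(t))$ in $\tau\times\tau$, so by $\tau$-lower semicontinuity of $d$ and the hypothesis
\[
d(f(s),f(t)) \;\leq\; \liminf_{n} d(f_n(s),f_n(t)) \;\leq\; \limsup_{n} d(f_n(s),f_n(t)) \;\leq\; L|s-t|.
\]
The only genuinely delicate point is the middle paragraph: passing from subsequential $\tau$-convergence to honest pointwise $\tau$-convergence in the absence of metrizability of $\tau$, which is handled by combining sequential compactness, the lower semicontinuity of $d$, and the density of $\Q^+$ to pin down the limit uniquely; the diagonal extraction and the Lipschitz estimate are routine.
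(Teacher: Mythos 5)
Your proof is correct and follows essentially the same route as the source the paper defers to: the paper gives no proof of this theorem, citing \cite{ambrosio2005}*{Proposition 3.3.1}, and that proposition is proved exactly by your scheme of diagonal extraction over a countable dense set of times, followed by the use of $\tau$-lower semicontinuity of $d$, the $\limsup$ hypothesis, and the subsequence principle to identify a unique $\tau$-limit at every $t\in\R^+$. The delicate points (uniqueness of subsequential limits via $d(y,y')\leq 2L|q-t|$ for all rational $q$, and the final lower-semicontinuity argument for the Lipschitz bound) are all handled correctly.
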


For our purpose, we set $(X,\tau)=(B_{\mathcal{M}^+(\R)},w^*)$, where
\[
B_{\mathcal{M}^+(\R)}:=\set{\mu\in\mathcal{M}^+(\R)}{\pn{\mu}{1} \leq 1}.
\]
As $C_0(\R)$ is separable, $(X,\tau)$ is metrizable and compact, hence sequentially compact.

Next, given an initial non-zero measure $\mu \in B_{\mathcal{M}^+(\R)}$, let $(\mu_n) \subseteq B_{\mathcal{M}^+(\R)}$ denote a sequence of non-zero, finitely supported measures that converges to $\mu$ in the $w^*$-topology (we do not need to carefully specify the $\mu_n$ as we had to in Section \ref{sect_existence}; indeed, the greater freedom here will lead to non-uniqueness of solutions). By applying Proposition \ref{prop_global_particle}, we obtain functions $\map{h_n}{\R^+}{B_{\mathcal{M}^+(\R)}}$ which, given Theorem \ref{theorem_ODE}, will satisfy \eqref{eq:weak} with initial condition $h_n(0)=\mu_n$.

By \eqref{eqn_h_d-Lipschitz}, we have
\[
 d(h_n(s),h_n(t)) \leq \pn{K}{\infty}^2\pn{K}{\infty}|s-t|,
\]
independently of $n$. By Theorem \ref{thm:arzela}, there exists a subsequence of $(h_n)$, labelled in the same way, and $\map{h}{\R^+}{B_{\mathcal{M}^+(\R)}}$, such that 
\[
    h_n(t) \xrightarrow{w^*} h(t) \qquad \text{for all }t \in \R.
\]
Establishing that $h$ is a weak solution of \eqref{eq:hdefall} uses a variant of the argument in \cite{onaraighpangsmith2022}*{Proposition 5.2}.

\begin{theorem}\label{thm:conv}
$(h$, $\bar{h})$ defined above is a weak solution of \eqref{eq:hdefall}. 
\end{theorem}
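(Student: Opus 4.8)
The plan is to show that the weak form \eqref{eq:weak} passes to the $w^*$-limit along the subsequence $(h_n)$. Each $h_n$ satisfies \eqref{eq:weak} with initial datum $\mu_n$, so for every fixed test function $\phi \in C_c^\infty(\Omega)$ we have an identity $T_1^{(n)} + T_2^{(n)} + T_3^{(n)} = 0$, where $T_1^{(n)} = \lint{-\infty}{\infty}{\phi(x,0)}{\mu_n(x)}$, $T_2^{(n)}$ is the linear term involving $\bar h_n$, and $T_3^{(n)}$ collects the remaining (cubic in $\bar h_n$) terms. The goal is to send $n \to \infty$ in each term and recover the corresponding identity for $h$.

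First I would treat the easy terms. Since $\mu_n \xrightarrow{w^*} \mu$ and $\phi(\cdot,0) \in C_0(\R)$ (it is continuous and compactly supported), $T_1^{(n)} \to T_1$. For $T_2^{(n)}$, using the rewriting in \eqref{eqn_T_2_1}, $T_2^{(n)} = \lint{0}{\infty}{\lint{-\infty}{\infty}{\phi_t(x,t)}{h_n(t)(x)}}{t}$; for each fixed $t$, $h_n(t) \xrightarrow{w^*} h(t)$ gives convergence of the inner integral, the inner integrals are bounded uniformly (by $\pn{\phi_t}{\infty}$ times $\pn{\mu}{1}\le 1$) and supported in the compact $t$-projection of $\supp\phi$, so the Dominated Convergence Theorem yields $T_2^{(n)} \to T_2$. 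The substantive work is in $T_3^{(n)}$.

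For $T_3^{(n)}$, the first step of the proof of Theorem \ref{theorem_ODE} already rewrote the analogous quantity (for push-forward solutions) as $\lint{0}{\infty}{\lint{\R^2\setminus\Delta}{}{\bar h(x,t)^2 K'''(x-z)\phi_x(x,t)}{\hat h(t)(x,z)}}{t}$, via Lemma \ref{lem_K'''} and the bi-Helmholtz identity $K - 2\alpha^2 K'' + \alpha^4 K'''' = \delta$; this algebraic manipulation is purely a statement about the terms in \eqref{eq:weak} and does not use the push-forward structure, so it applies verbatim to each $h_n$, giving $T_3^{(n)} = \lint{0}{\infty}{\lint{\R^2\setminus\Delta}{}{\bar h_n(x,t)^2 K'''(x-z)\phi_x(x,t)}{\hat h_n(t)(x,z)}}{t}$. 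Now I must pass to the limit in this expression. The kernel $(x,z) \mapsto \bar h_n(x,t)^2 K'''(x-z)\phi_x(x,t)$ is not continuous on $\R^2$ because $K'''$ jumps at the diagonal, and moreover the diagonal is excluded; this is exactly the obstruction. To handle it, I would argue that for each fixed $t$ the measure $\hat h_n(t)$ converges $w^*$ on $\R^2$ to $\hat h(t)$ (product of $w^*$-convergent measures converges $w^*$, testing against $C_0(\R^2)$, which follows from separability and a standard approximation of $C_0(\R^2)$ functions by sums of products); that $\bar h_n(x,t) = (K*h_n(t))(x) \to (K*h(t))(x) = \bar h(x,t)$ pointwise and is uniformly bounded by $\pn{K}{\infty}$, with enough equicontinuity in $x$ (uniform Lipschitz constant $\Lip(K)$) to upgrade to locally uniform convergence; and finally that the diagonal carries no mass for the limit on the relevant region, or more precisely that one can replace $\ind{\R^2\setminus\Delta} K'''(x-z)$ by a genuinely continuous approximant up to an error controlled by $\hat h_n(t)$-mass near the diagonal. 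The cleanest route is to observe that $K'''$ extends continuously from $\{x>z\}$ and from $\{x<z\}$ to their closures with a jump of size $2\pn{K'''}{\infty}$ across $\Delta$, and to use the argument from \cite{onaraighpangsmith2022}*{Proposition 5.2} (as the authors indicate) which deals with precisely this jump-discontinuity-across-the-diagonal situation; combined with the uniform $t$-bound $|T_3^{(n)}(t)| \le \pn{K}{\infty}^2\pn{K'''}{\infty}\pn{\phi_x}{\infty}$ and compact $t$-support, the Dominated Convergence Theorem then gives $T_3^{(n)} \to T_3$, where $T_3 = \lint{0}{\infty}{\lint{\R^2\setminus\Delta}{}{\bar h(x,t)^2 K'''(x-z)\phi_x(x,t)}{\hat h(t)(x,z)}}{t}$ and, reversing the first-step manipulation of Theorem \ref{theorem_ODE}, $T_1 + T_2 + T_3 = 0$ is exactly \eqref{eq:weak} for $(h,\bar h)$.

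The main obstacle, then, is the limit passage in $T_3^{(n)}$: reconciling the $w^*$-convergence of the product measures $\hat h_n(t)$ with the fact that the integrand has a jump discontinuity along the excluded diagonal. I would isolate this as the one place requiring care, handling the near-diagonal contribution by a quantitative estimate on $\hat h_n(t)(\{|x-z| < \delta\})$ — which, unlike in the push-forward setting, need not vanish since the $\mu_n$ are unconstrained, so the jump term $K'''(0)_\pm$ may contribute a genuine limit; this limiting contribution is retained in $T_3$ and is consistent with the weak formulation, and it is precisely the source of the non-uniqueness alluded to before the theorem.
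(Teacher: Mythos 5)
Your treatment of $T_1$ and $T_2$ matches the paper's. The divergence, and the gap, is in $T_3$. The paper never passes through the $\hat h$ double-integral representation from the proof of Theorem \ref{theorem_ODE}; it keeps the nonlinear terms of \eqref{eq:weak} in their original form as Lebesgue integrals over $\Omega$ of polynomial expressions in $\bar h$ and its spatial derivatives. The role of \cite{onaraighpangsmith2022}*{Proposition 5.2} is precisely to furnish $\partial_x^k\bar h_n\to\partial_x^k\bar h$ in $L^1_{\mathrm{loc}}(\Omega)$ for $0\le k\le 3$; since these functions are also uniformly bounded in $L^\infty(\Omega)$, every product appearing in the nonlinear terms converges in $L^1_{\mathrm{loc}}(\Omega)$, and integration against the fixed $C_c^\infty$ factors finishes the job. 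No diagonal issue arises at all, because the jump of $K'''$ is already absorbed into the BV-convolution $\partial_{xxx}\bar h_n = K'''*h_n$ before any limit is taken.

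Your detour through $\lint{\R^2\setminus\Delta}{}{\bar h_n(x,t)^2 K'''(x-z)\phi_x(x,t)}{\hat h_n(t)(x,z)}$ manufactures the diagonal obstruction that you then flag as the crux, but your resolution of it is not an argument. You write that any near-diagonal mass ``may contribute a genuine limit\ldots retained in $T_3$''; yet $T_3$ in your notation is by definition an integral over $\R^2\setminus\Delta$, so a contribution that concentrates onto $\Delta$ in the $w^*$-limit is precisely \emph{not} captured there. To close the gap you would need a quantitative estimate showing the near-diagonal strip $\{0<|x-z|<\delta\}$ contributes $O(\delta)$ uniformly in $n$ --- which is true, but for a reason you do not invoke: by the symmetry of $\hat h_n(t)$ and the oddness of $K'''$, the integrand plus its image under $(x,z)\mapsto(z,x)$ is bounded by a constant times $|x-z|$ (using the Lipschitz continuity of $\bar h_n^2$ and $\phi_x$), so the signed contributions nearly cancel. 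Finally, you misattribute \cite{onaraighpangsmith2022}*{Proposition 5.2}: it is an $L^1_{\mathrm{loc}}(\Omega)$ convergence result for $\bar h_n$ and its derivatives, not a device for handling near-diagonal mass of product measures. The paper's route is shorter exactly because it uses that proposition for what it actually says.
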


\begin{proof}
We show that $(h,\bar{h})$ satisfy \eqref{eq:weak}. The first term follows directly from the fact that $h_n(0)\to \mu$ in $(B_{\mathcal{M}^+(\R)},w^*)$ and $\phi(\cdot,0)$ is in the predual space $C_0(\R)$. From \cite{onaraighpangsmith2022}*{Proposition 5.2}, we have that $\partial_x^k\bar{h}_n\to\partial_x^k\bar{h}$ in the topology of $L^1_{loc}(\Omega)$, for $0 \leq k \leq 3$, so the linear term converges as $n\to\infty$. Furthermore, since for all $0 \leq k \leq 3$, the functions $(\partial_x^k\bar{h}_n),\partial_x^k\bar{h}\in L^\infty(\Omega)$ are uniformly bounded with respect to $\|\cdot\|_\infty$, the nonlinear terms in \eqref{eq:weak} also converge as $n\to\infty$. 
\end{proof}

Finally we give an example to show that, with respect to this wider class, we fail to have uniqueness of solutions.

\begin{example}\label{ex_non-uniqueness}
It is evident by inspection that the function $c \in I_{\{0\}}$, given by $c(0,t)=0$, $t \in \R^+$, is a solution of \eqref{eqn_ODE_system} (equivalently \eqref{eq:ivp}), with respect to the Dirac measure $\delta_0$, and thus the corresponding function $h_0$ given by $h_0(t)=\delta_0$, $t \in \R^+$, is a solution of \eqref{eq:weak} with initial condition $h_0(0)=\delta_0$.

We will obtain another solution $h$ of \eqref{eq:weak} that satisfies $h(0)=\delta_0$. Given $n \in \N$, set $\mu_n = \frac{1}{2}(\delta_{-n^{-1}} + \delta_{n^{-1}})$. By Proposition \ref{prop_global_particle}, there exists a solution $c_n \in I_{\{-n^{-1},n^{-1}\}}$ of \eqref{eqn_ODE_system} (equivalently \eqref{eq:ivp}), with corresponding function $h_n$ given by $h_n(t) = \frac{1}{2}(\delta_{c(-n^{-1},t)} + \delta_{c(n^{-1},t)})$ that solves \eqref{eq:weak} with initial condition $h_n(0)=\mu_n$.

Because $\mu_n \stackrel{w^*}{\to} \delta_0$, by taking as subsequence as necessary as above, there exists a solution $h$ of \eqref{eq:weak} such that $h_n(t) \xrightarrow{w^*} h(t)$ for all $t \in \R^+$ and having initial condition $h(0)=\delta_0$.

Now we show that $h \neq h_0$. By appealing to uniqueness of the particle solutions, using the facts that $K$ and $K'''$ are even and odd functions, respectively, and the $\mu_n$ are symmetric about the origin, we have $c_n'(-n^{-1},t)=-c_n'(n^{-1},t)$ for all $t>0$. Then, using \eqref{eqn_lower_estimate_refined} we have
\[
 2c_n'(n^{-1},t) = c_n'(n^{-1},t) - c_n'(-n^{-1},t) \geq \frac{\pn{K}{\infty}^2 \pn{K'''}{\infty}}{8A}(1-\me^{-At}).
\]
As $h_n(t) \stackrel{w^*}{\to} h(t)$, and observing the symmetry, we conclude that there is a function $\map{\beta}{\R^+}{\R^+}$ such that $h(t)=\frac{1}{2}(\delta_{-\beta(t)}+\delta_{\beta(t)})$, $t \in \R^+$, where
\[
 \beta(t) \geq \frac{\pn{K}{\infty}^2 \pn{K'''}{\infty}}{16A}(1-\me^{-At}), \qquad t \in \R^+.
\]
In particular, $h(t) \neq h(0)$ for all $t>0$.
\end{example}

In the example above, the function $h_0$ follows a single path while $h$ immediately splits into two paths. The wider class of weak solutions admits all kinds of path-splitting. This kind of splitting is prohibited in the push-forward case, to the extent that solutions are rendered unique. We remark that, while it is helpful for mathematical completeness to consider all measures in $B_{\mathcal{M}^+(\R)}$ as potential initial conditions, initial measures $\mu$ having atoms (i.e. where $P \neq \varnothing$) are not physically relevant. This observation prompts the following question.

\begin{problem}
 If the initial measure $\mu$ is diffuse, i.e.~non-atomic, is there only one solution within the class of all weak solutions that is unique? 
\end{problem}

\subsection*{Acknowledgements}

 This publication has emanated from research supported in part by a Grant from Science Foundation Ireland under Grant Number 18/CRT/6049. LON has also been supported by the ThermaSMART network. The Ther\-ma\-SMART network has received funding from the European Union’s Horizon 2020 re\-search and innovation programme under the Marie Sklodow\-ska–Curie grant agree\-ment No. 778104.

\bibliography{reference}

\end{document}